\documentclass[11pt,reqno]{amsart}
\usepackage[latin1]{inputenc}
\usepackage[T1]{fontenc}
\usepackage{amsbsy,amsfonts,amsmath,amssymb,amscd,amsthm}
\usepackage{mathrsfs,mathdots}
\usepackage{subfigure,enumitem,color,graphicx}
\usepackage[a4paper,text={6.1in,8in},centering,includefoot,foot=1in]{geometry}
\usepackage{pxfonts}
\usepackage[colorlinks=true,linkcolor=blue,citecolor=green]{hyperref}
\usepackage{srcltx}
\usepackage[belowskip=10pt,margin=20pt,font=small,labelfont=bf,textfont=it]{caption}

\small\normalsize
\setlength{\parskip}{.05in} \setlength{\textheight}{22cm}

\usepackage{ifpdf}

\begingroup\makeatletter%
\ifx\SetFigFontNFSS\undefined\gdef\SetFigFontNFSS#1#2#3#4#5{%
\reset@font\fontsize{#1}{#2pt}\fontfamily{#3}\fontseries{#4}\fontshape{#5}\selectfont}\fi%
\endgroup%

\DeclareMathOperator{\bb}{\mathbb{R}}

\DeclareMathAlphabet{\mathpzc}{OT1}{pzc}{m}{it}

\theoremstyle{plain}
\newtheorem{thm}{Theorem}[section]
\newtheorem{mainthm}{Theorem}

\newtheorem{lem}[thm]{Lemma}
\newtheorem{prop}[thm]{Proposition}
\newtheorem{defi}[thm]{Definition}
\newtheorem{rem}[thm]{Remark}

\numberwithin{equation}{section}

\begin{document}
~\vspace{-0.1cm}
\title{Heteroclinic cycles arising in generic unfoldings of nilpotent
singularities}

\author[Barrientos]{Pablo G. Barrientos}
\address{\footnotesize \centerline{Departamento de Matem\'aticas, Universidad de Oviedo}
   \centerline{Av. Calvo Sotelo s/n, 33007 Oviedo, Spain}}
\email{barrientos@uniovi.es}

\author[Ib\'a\~nez]{Santiago Ib\'a\~nez}
\address{\footnotesize \centerline{Departamento de Matem\'aticas, Universidad de Oviedo}
   \centerline{Av. Calvo Sotelo s/n, 33007 Oviedo, Spain}}
\email{mesa@uniovi.es}

\author[Rodr\'iguez]{J. \'Angel Rodr\'iguez}
\address{\footnotesize \centerline{Departamento de Matem\'aticas, Universidad de Oviedo}
   \centerline{Av. Calvo Sotelo s/n, 33007 Oviedo, Spain}}
\email{jarodriguez@uniovi.es}

\keywords{Bifocal homoclinic orbits, Bykov cycles, Shil'nikov
homoclinic orbits, unfoldings of singularities}

\begin{abstract}
In this paper we study the existence of heteroclinic cycles in
generic unfoldings of nilpotent singularities. Namely we prove
that any nilpotent singularity of codimension four in
$\mathbb{R}^4$ unfolds generically a bifurcation hypersurface of
bifocal homoclinic orbits, that is, homoclinic orbits to
equilibrium points with two pairs of complex eigenvalues. We also
prove that any nilpotent singularity of codimension three in
$\mathbb{R}^3$ unfolds generically a bifurcation curve of
heteroclinic cycles between two saddle-focus equilibrium points
with different stability indexes. Under generic assumptions these
cycles imply the existence of homoclinic bifurcations. Homoclinic
orbits to equilibrium points with complex eigenvalues are the
simplest configurations which can explain the existence of complex
dynamics as, for instance, strange attractors. The proof of the
arising of these dynamics from a singularity is a very useful
tool, particularly for applications.
\end{abstract}

\maketitle

\vspace{-1cm}
\section{Introduction}\label{section:intro} The relationship between dynamic complexity and the presence of homoclinic
orbits was discovered by Poincar\'{e} more than a century ago. In his famous essay on the stability of the solar system \cite{Poincare}, Poincar\'{e} showed that the invariant manifolds of a hyperbolic fixed point of a diffeomorphism could cut each other at points, called homoclinics, which yield the existence of more and more points of this type and consequently, a very complicated configuration of the manifolds. Many years later, Birkhoff \cite{Birkhoff35} showed that, in general, near a homoclinic point there exists an extremely intrincated set of periodic orbits, mostly with a very long period. By the mid 60's, Smale \cite{Smale} placed his geometrical device, the Smale horseshoe, in a neighborhood of a transversal homoclinic point. The horseshoes explained the Birkhoff's result and arranged the complicated dynamics that occur near a homoclinic orbit by means of a conjugation to the Bernouilli's shift. In \cite{MoraViana} authors proved the appearance of strange attractors during the process of creation or destruction of the Smale horseshoes which appear through a bifurcation of a tangential homoclinic point. These attractors are like those shown in \cite{BenedicksCarleson} for the H\'{e}non family, that is, they are nonhyperbolic and persistent in the sense of measure.

In the framework of vector fields, Shil'nikov \cite{Shilnikov65}
proved that in every neighborhood of a homoclinic orbit to a
hyperbolic equilibrium point of an analytical vector field on
$\mathbb{R}^{3}$, with eigenvalues $\lambda $ and $-\varrho
\pm\omega i$ such that $ 0<\varrho<\lambda $, that is, the
so-called \emph{Shil'nikov homoclinic orbit}, there exists a
countable set of periodic orbits. This result is similar to that
found by Birkhoff for diffeomorphisms and thus, it should be
understood in a manner similar to that devised  by Smale. Indeed,
Tresser \cite{Tresser} showed that in every neighborhood of such a
homoclinic  orbit,  an infinity of linked horseshoes can be
defined in such a way that the dynamics is conjugated to a
subshift of finite type on an infinite number of symbols. Once
again, these horseshoes appear and disappear by means of generic
homoclinic bifurcations leading to persistent non hyperbolic
strange attractors like those in \cite{MoraViana}.

As follows from \cite{OvsShi1987}, nonhyperbolic dynamics is dense
in the space $\mathcal{X}$ of vector fields with a Shil'nikov
homoclinic orbit. In particular, for each $\varepsilon>0$, the
subset of vector fields with a homoclinic tangency to a hyperbolic
periodic orbit in an $\varepsilon$-neighbourhood of the homoclinic
orbit is dense in $\mathcal{X}$. These tangencies give rise to
suspended H\'enon-like strange attractors. In \cite{Puma97,Puma01}
it was proved that infinitely many of these strange attractors can
coexist in non generic families of vector fields with a Shil'nikov
homoclinic orbit, for parameter values in a set of positive
Lebesgue measure. Later \cite{hom}, it was proved that an infinity
of such attractors can coexist in a more general context. For an
extensive study of the phenomena accompanying homoclinic
bifurcations, see \cite{bondiavia,homsan,PalisTakens}.

Because of the importance of homoclinic orbits in Dynamics, many
papers were devoted to prove their existence. A seminal work was
due to Melnikov \cite{Melnikov}, who introduced original ideas to
prove the existence of transversal homoclinic orbits in
non-autonomous perturbations of a planar hamiltonian vector field.
These ideas were developed in \cite{ChowHaleMalletParet80} in
order to determine both, homoclinic bifurcation curves and the
existence of subharmonics in two-parameter families of
non-autonomous second order differential equations. In
\cite{Palmer84}, Palmer developed a theory involving transversal
homoclinic points and exponential dichotomies that was very useful
for the study of homoclinic bifurcations in higher dimensions.

Since Shil'nikov homoclinic orbits are not transversal, Melnikov's
techniques had to be modified in order to prove their existence in
families of vector fields. In \cite{Rodriguez86}, generic families
of quadratic three dimensional vector fields with Shil'nikov
homoclinic orbits were given. Putting together ideas from
\cite{Rodriguez86,ChowHaleMalletParet80,Palmer84}, it was proved
in \cite{IbanezThesi} that Shil'nikov homoclinic orbits appear in
generic unfoldings of a nilpotent singularity of codimension four
in $\mathbb{R}^{3}$. Since singularities are the simplest elements
to be found in phase portraits of vector fields, arguing the
existence of homoclinic orbits from the presence of singularities
is a highly relevant task. Nevertheless, in order to get the
greatest interest in applications, such singularities should be of
codimension as low as possible. With this in mind, the result
obtained in \cite{IbanezThesi} was improved in
\cite{IbanezRodriguez}, where it was showed that
\begin{mainthm}
\label{theoremA}
Shil'nikov homoclinic orbits appear in every generic unfolding of the nilpotent singularity of codimension three in $\mathbb{R}^{3}$.
\end{mainthm}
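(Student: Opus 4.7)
The plan mirrors and refines the strategy used in \cite{IbanezThesi} for the codimension-four case, but with a tighter three-parameter budget. The starting point is to fix the $C^\infty$ normal form of the codimension-3 nilpotent singularity and its generic three-parameter unfolding, which, after standard near-identity transformations, takes the form
$$\dot{x} = y, \qquad \dot{y} = z, \qquad \dot{z} = \mu_1 + \mu_2 y + \mu_3 z + \alpha x^2 + \text{h.o.t.},$$
with an explicit non-degeneracy condition $\alpha\neq 0$ together with further conditions on the higher-order coefficients. I would then perform a quasi-homogeneous blow-up of both phase space and parameters, namely $x=\epsilon^{3}X$, $y=\epsilon^{4}Y$, $z=\epsilon^{5}Z$, $t=\tau/\epsilon$, combined with $\mu_1=\epsilon^{6}\nu_1$, $\mu_2=\epsilon^{2}\nu_2$, $\mu_3=\epsilon\,\nu_3$. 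Under this change, and after dividing by the common factor $\epsilon^{6}$, the system becomes a smooth $\epsilon$-perturbation of the autonomous limit family
$$X'=Y, \qquad Y'=Z, \qquad Z'=\nu_1+\nu_2 Y+\nu_3 Z+\alpha X^2,$$
with $(\nu_1,\nu_2,\nu_3)$ ranging in a bounded region of $\mathbb{R}^3$.

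Next, I would analyze this limit family. When $\alpha\nu_1<0$ it has two equilibria $X=\pm\sqrt{-\nu_1/\alpha}$, $Y=Z=0$, and a direct computation of the characteristic polynomial shows that one of them lies on a smooth codimension-one Hopf-degenerate surface in $(\nu_1,\nu_2,\nu_3)$-space; crossing this surface, the equilibrium becomes a saddle-focus with eigenvalues $\lambda$, $-\varrho\pm\omega i$. To exhibit a homoclinic loop to this saddle-focus I would perform a secondary scaling near the Hopf-degenerate point, reducing the limit family to a small perturbation of a planar integrable (Hamiltonian) system on its two-dimensional center manifold coupled to a slow transverse direction. Inside this reduced model, a Melnikov-type argument on the planar Hamiltonian skeleton detects, for parameters on a further codimension-one curve, a homoclinic connection to the saddle-focus; the ratio $\varrho/\lambda$ varies continuously along this curve and can be made to satisfy the Shil'nikov inequality $0<\varrho<\lambda$ on an open subset.

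Finally, I would lift this picture back to the full system for small $\epsilon>0$ through a non-transversal Melnikov argument in the spirit of \cite{ChowHaleMalletParet80,Rodriguez86}, using Palmer's exponential dichotomy framework \cite{Palmer84} to control the splitting of the one-dimensional unstable and two-dimensional stable manifolds of the persisting saddle-focus along the unperturbed homoclinic loop. The zero set of the resulting Melnikov function $M(\nu_1,\nu_2,\nu_3,\epsilon)$ defines, for each sufficiently small $\epsilon$, a smooth codimension-one surface of parameters carrying Shil'nikov homoclinic orbits, and undoing the rescaling produces the desired surface in the original $(\mu_1,\mu_2,\mu_3)$-space. The principal obstacle I anticipate is precisely this last step: because the connection is not transversal and the linear part at the saddle-focus has complex eigenvalues, classical Melnikov theory does not apply, and one must show by an explicit (and delicate) computation that the leading-order Melnikov integral along the unperturbed loop is non-zero under the prescribed genericity conditions on the normal form. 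A secondary difficulty is bookkeeping: with only three unfolding parameters, the Hopf surface, the homoclinic surface and the region where $0<\varrho<\lambda$ have just enough room to intersect in an open set, so the genericity assumptions must be tracked through every reduction to guarantee that this intersection is non-empty.
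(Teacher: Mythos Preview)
Your rescaling step is essentially the same as the paper's (it is the quasi-homogeneous blow-up of \cite{Dumortier}), and your instinct to use Palmer's dichotomy framework for the final persistence argument is also right. The gap is in the middle, where you try to manufacture a homoclinic orbit in the limit family.

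The reduction you describe---a secondary scaling near a Hopf surface, followed by a ``planar Hamiltonian skeleton on the two-dimensional center manifold''---does not produce a Shil'nikov homoclinic. Near a Hopf point the equilibrium has a two-dimensional center manifold on which the dynamics is a perturbed rotation, and a one-dimensional transverse hyperbolic direction. A homoclinic to the saddle-focus is an intrinsically three-dimensional object: the one-dimensional invariant manifold must leave along the hyperbolic direction, make a large excursion, and return to the two-dimensional invariant manifold. None of this is visible on the center manifold, so a planar Melnikov computation there cannot detect it. The analogy with Bogdanov--Takens (where the homoclinic \emph{is} planar) breaks down precisely because the complex eigenvalues force the connection off any two-dimensional slice.

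The paper's route is entirely different and rests on two ingredients you do not mention. First, on the reversibility line $\nu_3=0$ the limit family becomes the Michelson system, which for a specific parameter value $c=c_k$ has an \emph{explicit} heteroclinic solution (the Kuramoto solution (M4)) joining the one-dimensional invariant manifolds of the two saddle-foci $Q_\pm$, together with a topologically transverse intersection of their two-dimensional invariant manifolds. This is a (topological) Bykov cycle, a codimension-two heteroclinic configuration. Second, the Shil'nikov homoclinics are not obtained directly: they appear as a consequence of the Bykov-cycle unfolding, which is known to generate a scroll-shaped surface of Shil'nikov homoclinics to each $Q_\pm$ around the Bykov bifurcation curve. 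The actual analytic work in the paper is to show that this Bykov curve persists for $\varepsilon>0$, which amounts to checking that the $2\times2$ Jacobian of the two-component splitting function $\xi^\infty$ is nonsingular; this is done by exploiting the parity of the Kuramoto solution under the reversibility and then estimating three integrals (Appendix~B). So the ``delicate Melnikov computation'' you anticipate is real, but it is carried out on an explicit heteroclinic orbit between two equilibria, not on a hypothetical homoclinic coming from a center-manifold reduction.
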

Proving that Shil'nikov homoclinic orbits can be
unfolded generically from a singularity of codimension less than
three is currently a very interesting open problem. The dimension
of the center manifold should be at least three. The lowest
codimension singularities in $\mathbb{R}^{3}$ with a 3-dimensional
center manifold are the Hopf-zero singularities which have
codimension two \cite{GuckenheimerHolmes}. The difficulties that
appear on studying the existence of Shil'nikov homoclinic orbits
in generic unfoldings of Hopf-zero singularities are discussed in
\cite{DumortierIbanezKokubuSimo}.

Theorem~\ref{theoremA} was essential in \cite{Drubi} to prove the
existence of persistent strange attractors in the four parametric
family of vector fields obtained when two Brusselators are
linearly coupled by diffusion. Indeed, this family is a generic
unfolding of three-dimensional nilpotent singularities of
codimension three. Therefore it displays Shil'nikov homoclinic
orbits and, consequently, persistent strange attractors.
Nevertheless, this family may display a richer dynamics.
Three-dimensional nilpotent singularities appear along two
bifurcation curves which emerge from a bifurcation point
corresponding to a four-dimensional nilpotent singularity of
codimension four, for which the family is also a generic
unfolding. Therefore, one should wonder whether a different class
of homoclinic orbits can take place from this four-dimensional
nilpotent singularity. In this paper we will prove the following
result:

\begin{mainthm}
\label{thmB} In every generic unfolding of a four-dimensional
nilpotent singularity of codimension four there is a bifurcation
hypersurface of homoclinic orbits to equilibrium points with two
pairs of eigenvalues $\rho_{k}\pm\omega_{k} i$, with $k=1,2$, such
that $\rho_{1}<0<\rho_{2}$.
\end{mainthm}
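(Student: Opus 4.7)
The strategy parallels the one used for Theorem~\ref{theoremA}, adapted to four dimensions. First, I would reduce the singularity to a polynomial normal form: up to smooth conjugacy and parameter reparametrization, a codimension-four nilpotent singularity in $\mathbb{R}^{4}$ admits a generic unfolding of the form $\dot{x}_{k}=x_{k+1}$ for $k=1,2,3$ and $\dot{x}_{4}=F(x_{1},x_{2},x_{3},x_{4};\mu)$ with $\mu\in\mathbb{R}^{4}$ and a nondegenerate polynomial leading part for $F$. Next I apply a singular blow-up in variables and parameters, of the form $x_{k}=\varepsilon^{k}y_{k}$, $t=\varepsilon^{-1}\tau$, $\mu_{k}=\varepsilon^{\alpha_{k}}\nu_{k}$, with exponents $\alpha_{k}$ chosen so that all relevant monomials balance. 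Letting $\varepsilon\to 0$ yields, after dividing by a common power of $\varepsilon$, a family of polynomial Hamiltonian vector fields in $\mathbb{R}^{4}$ parametrized by $\nu$ on a sphere $S^{3}$.

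The second step is to exhibit, within this Hamiltonian limit and for $\nu$ in an open region of $S^{3}$, an equilibrium with two pairs of purely imaginary eigenvalues $\pm\omega_{k}i$, $k=1,2$, together with a homoclinic loop to it. The existence of such an equilibrium follows from the algebraic structure of the leading polynomial part of $F$, while the Hamiltonian character of the limit, together with a suitable choice of energy level, produces the connection. Following the exponential dichotomy framework of~\cite{Palmer84} and the Melnikov-type setup of~\cite{ChowHaleMalletParet80}, I then construct a smooth bifurcation function $M(\nu,\varepsilon)$ whose zeros parametrize genuine homoclinic orbits of the full unfolding. The key point is to verify that $\partial_{\varepsilon}M$ at $\varepsilon=0$ is a nonzero Melnikov integral along the unperturbed connection; the implicit function theorem then yields a smooth hypersurface of zeros in $(\nu,\varepsilon)$ which, after undoing the rescaling, transports to the claimed bifurcation hypersurface in $\mu$-space. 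Finally, a trace/divergence computation along the unfolding shows that for $\varepsilon>0$ the relevant equilibrium acquires eigenvalues $\rho_{k}\pm\omega_{k}i$ with $\rho_{1}<0<\rho_{2}$, so the surviving connection is genuinely bifocal.

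\textbf{Main obstacle.} The most delicate point is the non-vanishing of the Melnikov integral for some $\nu\in S^{3}$. In contrast with the Shil'nikov case in $\mathbb{R}^{3}$, the unperturbed homoclinic asymptotes to an equilibrium with two distinct frequencies, and both its stable and unstable manifolds are two-dimensional; the adjoint solutions used to build $M$ therefore oscillate along the connection, and controlling the resulting oscillatory integrals while exhibiting an explicit $\nu$ at which the integral is nonzero requires a careful, specific computation in the Hamiltonian limit. Related to this, one must also confirm that the exponents $\alpha_{k}$ and the nondegeneracy hypotheses on $F$ imposed by genericity of the unfolding are compatible with the presence of an equilibrium of center--center type in the rescaled limit; without this compatibility the entire blow-up argument collapses.
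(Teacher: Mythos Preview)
Your plan contains a fatal gap at the second step. You propose to work in the Hamiltonian limit at an equilibrium of \emph{center--center} type, i.e.\ with two pairs of purely imaginary eigenvalues $\pm\omega_{k}i$, and to find a homoclinic loop to it. But such an equilibrium is not hyperbolic: its stable and unstable manifolds are trivial (zero-dimensional), so there is no classical homoclinic orbit to it, and the Palmer exponential-dichotomy machinery you invoke simply does not apply. Your own remark that ``both its stable and unstable manifolds are two-dimensional'' is inconsistent with a center--center spectrum, and the worry you flag about oscillatory adjoint solutions is a symptom of precisely this breakdown: without hyperbolicity the variational equation has no dichotomy on either half-line, the Fredholm structure is lost, and no Melnikov integral in the sense of \cite{Palmer84,ChowHaleMalletParet80} can be defined.

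The paper avoids this by organizing the perturbation around a different point of the limit family. On the reversibility set one locates the Belyakov--Devaney point $\mathrm{BD}$, where the equilibrium $p_{-}$ has a pair of \emph{double real} eigenvalues $\pm 1$; this equilibrium is hyperbolic with $\dim W^{s}=\dim W^{u}=2$. At $\mathrm{BD}$ the limit system is the fourth-order equation $u^{(iv)}+Pu''+u-u^{2}=0$ with $P=-2$, for which Amick--Toland \cite{AmickToland92} supply a non-degenerate homoclinic orbit. Non-degeneracy gives a single bounded adjoint solution (explicitly $\psi(t)=\nabla H(p(t))$ thanks to the first integral), so the bifurcation equation $\xi^{\infty}(\lambda)=0$ is \emph{scalar}, and its gradient is computed in closed form: $\xi_{\lambda_{1}}=\int p_{2}^{2}>0$, $\xi_{\lambda_{2}}=0$ by parity, $\xi_{\lambda_{3}}=-\int p_{3}^{2}<0$, and $\xi_{\lambda_{4}}=\kappa\int p_{1}p_{2}^{2}\neq 0$ since $p_{1}>0$. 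The implicit function theorem then yields a hypersurface $\mathcal{H}om$ of homoclinic orbits transverse to $\varepsilon=0$. The bifocal conclusion is obtained \emph{a posteriori}: the discriminant equations for double eigenvalues cut $\mathcal{H}om$ along two surfaces $\mathcal{H}om^{\pm}$ (with normals $(1,\mp 1,1,0)$ at the origin), and the inequality $\xi_{\lambda_{1}}-\xi_{\lambda_{3}}=\int p_{2}(p_{2}-p_{4})>0$ (using $p_{2}<0$ and $p_{4}-p_{2}>0$ on $(0,\infty)$) guarantees that these surfaces, together with $\mathcal{H}om$ and $\{\lambda_{4}=0\}$, are in general position. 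One of the four resulting sectors of $\mathcal{H}om$ is the bifocal region $\mathcal{H}om_{FF}$, and it meets $\varepsilon>0$.

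In short: do not try to perturb from a center--center. Perturb from the hyperbolic Belyakov--Devaney point, where the homoclinic is known and non-degenerate, and recover the bifocus by analyzing how the characteristic polynomial varies across the resulting hypersurface.
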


Homoclinic orbits in Theorem~\ref{thmB} are usually known as
\emph{bifocal homoclinic orbits} or, shortly, \emph{bifocus}.
Shil'nikov \cite{Shilnikov67} was again the first one in studying
the dynamics associated to them. He proved, as in
\cite{Shilnikov65}, the existence of a countable set of periodic
orbits in the non-resonant case $-\rho_1\neq \rho_2$. Subsequent
works \cite{Devaney76,Sparrow91,glendinning97,harterich98} were
devoted to analyze the formation and bifurcations of these
periodic orbits by studying the Poincar\'{e} map associated to the
flow in a neighborhood of the bifocus. Devaney \cite{Devaney76}
considers the hamiltonian case, hence with $-\rho_1=\rho_2$. He
proves that for any local transverse section to the homoclinic
orbit, and for any positive integer $N$, there is a compact
inva\-riant hyperbolic set on which the Poincar\'e map is
conjugate to the Bernoulli shift on $N$ symbols. In seeking to
determine the invariant set of this Poincar\'{e} map in the
general case, it is shown in \cite{Sparrow91} that this set is
contained in a neighborhood of a spiral sheet (shaped like a
scroll). In fact, the invariant set is a neighborhood of the
intersection of this scroll and its image under the map, which is
another scroll, in general skewed and offset from the original. In
\cite{glendinning97} the authors extend the known theory regarding
bifocal homoclinic bifurcations and present numerical verification
of the more interesting theoretical predictions that had been
made. H\"{a}rterich \cite{harterich98} studies bifocal homoclinic
orbits arising in reversible systems, hence again with
$-\rho_1=\rho_2$. He proves that for any $N\geq 2$ there exists
infinitely many $N$-homoclinic orbits in a neighborhood of the
primary homoclinic orbit. Each of them is accumulated by one or
more families of $N$-periodic orbits.

As for Shil'nikov homoclinic orbits, it has been proved (see
\cite{OvsShi1992}) that homoclinic tangencies to hyperbolic
periodic orbits are dense in the space of vector fields with a
bifocal homoclinic orbit. Nevertheless, despite the abundant
literature regarding bifocus, as far as we know, no result has
been established relating the existence of bifocal homoclinic
bifurcations with the existence of persistent strange attractors.
This, in spite of a bifocus seems to be a scenario for more
complicated dynamics than those inherent to Shil'nikov homoclinic
orbits, where the existence of such strange attractors has been
proved. In fact it seems natural to think that the dynamical
complexity associated with homoclinic cycles increases with
dimension. For instance, strange attractors with more than one
positive Lyapunov exponent could appear. Therefore, bearing in
mind a possible extension of Theorem~\ref{thmB} to higher
dimensions, we will begin its proof working with $n$-dimensional
nilpotent singularities of codimension $n$.

Let $X$ be a $C^{\infty }$ vector field on $\mathbb{R}^{n}$ with
$X(0)=0$ and 1-jet linearly conjugated to
$\sum_{k=1}^{n-1}x_{k+1}\partial/\partial x_{k}$.
Introducing appropriate coordinates, $X$ can be written as%
\begin{equation*}
\sum_{k=1}^{n-1}x_{k+1}\frac{\partial }{\partial x_{k}}+f(x_{1},...,x_{n})%
\frac{\partial }{\partial x_{n}},
\end{equation*}%
with $f(x)=O(\left\Vert x\right\Vert ^{2})$ where $x=(x_{1},...,x_{n})$. It
is said that $X$ is a \emph{nilpotent singularity of codimension $n$} if the
generic condition $\partial ^{2}f/\partial x_{1}^{2}\neq 0$ is fulfilled.
As we will explain in Section 3, in appropriate coordinates and after
rescaling, any generic $n$-parametric unfolding $X_{\lambda }$ of a nilpotent
singularity can be written in a neighborhood of the origin as
\begin{equation}
\label{eq:intro1}
\sum_{k=1}^{n-1}y_{k+1}\frac{\partial }{\partial y_{k}}+(\nu
_{1}+\sum_{k=2}^{n}\nu _{k}y_{k}+y_{1}^{2}+O(\varepsilon ))\frac{\partial }{%
\partial y_{n}},
\end{equation}
with $\varepsilon >0$ and $\nu _{1}^{2}+...+\nu _{n}^{2}=1$. The limit family obtained for $\varepsilon =0$, will play a main role. It is time reversible with respect to the involution $R(y_{1},y_{2},...,y_{n})=((-1)^{n}y_{1},(-1)^{n-1}y_{2},...,-y_{n})$ for
parameter values on the set
\begin{equation*}
\mathcal{T}=\left\{ (\nu _{1},...,\nu _{n})\in \mathbb{S}^{n-1}:\nu _{n-2i}=0\text{ }
\mbox{with}\text{ }i=0,...,\lfloor(n-2)/2\rfloor\right\} ,
\end{equation*}%
where $\lfloor.\rfloor$ denotes the floor function. This manifold $\mathcal{T}$ of dimension
$\lfloor n/2-1\rfloor$ is called the reversibility set of the $n$-dimensional nilpotent
limit family.

For $n=4$ and for values of the parameters $\nu _{1}<0$, $\nu _{2}=\nu
_{4}=0 $, the limit family can be transformed in
\begin{equation}
\label{eq:intro2}
x_{2}\frac{\partial }{\partial x_{1}}+x_{3}\frac{\partial }{\partial x_{2}}%
+x_{4}\frac{\partial }{\partial x_{3}}+(-x_{1}+\eta _{3}x_{3}+x_{1}^{2})%
\frac{\partial }{\partial x_{4}},
\end{equation}
where $\eta _{3}=2^{-1/2}(-\nu _{1})^{-1/4}\nu _{3}$. Denoting $u=x_{1}$ and
$P=-\eta _{3}$ the vector field (\ref{eq:intro2}) is equivalent to the fourth-order
equation $u^{(iv)}(t)+Pu^{\prime \prime }(t)+u(t)-u^{2}(t)=0$, which has
been widely studied \cite{AmickToland92,ChampneysToland93,BuffoniChampneysToland96,Buffoni96} due to its role in some applications as the study of travelling waves of the Korteweg- de Vries equation
$$u_{t}=u_{xxxx}-bu_{xxx}+2uu_{x},$$
or the description of the displacement of a compressed strut with
bending softness resting on a nonlinear elastic foundation
\cite{Champneys98}. In particular, according to
\cite{AmickToland92}, when $\eta _{3}=2$ the vector field
(\ref{eq:intro2}) has a homoclinic orbit to a hyperbolic
equilibrium point at which the linear part has a pair of double
real eigenvalues $\pm 1$. We will complete the proof of
Theorem~\ref{thmB} by proving that the homoclinic  orbit persists
for parameter values on a hypersurface $\mathcal{H}om$ which
intersects $\varepsilon > 0$ and is obtained by studying the
appropriate bifurcation equation. Moreover we will show that
$\mathcal{H}om$ contains regions corresponding to bifocal
homoclinic orbits. An essential fact used in \cite{AmickToland92}
to prove the existence of homoclinic orbits in (\ref{eq:intro2})
is that it is a family of hamiltonian vector fields. This permits
to apply the general theory developed in \cite{HoferToland84}.
Again bearing in mind a possible extension of Theorem~\ref{thmB}
to higher dimensions we will prove that, for any even $n$ and for
parameter values on the reversibility set, the vector fields in
the limit family of (\ref{eq:intro1}) are hamiltonian.

Methods used in the proof of Theorem~\ref{thmB} also allow us to
prove the existence of topological Bykov cycles, which will be
defined below, in the case $n=3$, according with the following
result.
\begin{mainthm}
\label{thmC} In every generic unfolding of a three-dimensional
nilpotent singularity of codimension three there is a bifurcation
curve of topological Bykov cycles.
\end{mainthm}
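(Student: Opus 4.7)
The plan is to mirror the proof strategy of Theorem~B, now in dimension three and for heteroclinic (rather than homoclinic) orbits, exploiting that in the limit family the two connections that form a Bykov cycle admit a reversible description.

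Following the scheme announced after (\ref{eq:intro1}), any generic three-parameter unfolding of the codimension-three nilpotent singularity in $\mathbb{R}^3$ can be written, in scaled coordinates, as
\begin{equation*}
X_\varepsilon \;=\; y_2\,\frac{\partial}{\partial y_1}+y_3\,\frac{\partial}{\partial y_2}+\bigl(\nu_1+\nu_2 y_2+\nu_3 y_3+y_1^2+O(\varepsilon)\bigr)\frac{\partial}{\partial y_3},
\end{equation*}
with $(\nu_1,\nu_2,\nu_3)\in\mathbb{S}^2$ and $\varepsilon\ge 0$. The limit family $X_0$ is time-reversible under $R(y_1,y_2,y_3)=(-y_1,y_2,-y_3)$ precisely on the reversibility set $\mathcal{T}=\{\nu_3=0\}$. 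For $\nu_1<0$ and $\nu_3=0$, the limit family has two equilibria $p_\pm(\nu)=(\pm\sqrt{-\nu_1},0,0)$, interchanged by $R$; inspection of the characteristic polynomial $\lambda^3-\nu_2\lambda\mp 2\sqrt{-\nu_1}$ shows that on a suitable subarc of $\mathcal{T}$ both equilibria are saddle-foci with stability indices of opposite type.

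Reversibility is then exploited to describe the heteroclinic cycle. Any orbit of $X_0$ meeting $\mathrm{Fix}(R)=\{y_1=y_3=0\}$ is $R$-symmetric, so a point $q\in W^u(p_-)\cap\mathrm{Fix}(R)$ yields a heteroclinic orbit $p_-\to p_+$ (transverse intersection of a $2$-dimensional and a $1$-dimensional set in $\mathbb{R}^3$, an open condition in the parameters), while a point $q\in W^u(p_+)\cap\mathrm{Fix}(R)$ yields a heteroclinic orbit $p_+\to p_-$ (codimension-one in the parameters, involving two $1$-dimensional sets in $\mathbb{R}^3$). Together they constitute a reversible Bykov cycle of $X_0$. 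Its existence at some distinguished parameter $(\nu_1^\ast,\nu_2^\ast,0)\in\mathcal{T}$ reduces to proving a sign change of a scalar shooting function $\Delta(\nu_2)$ that measures the miss distance of a chosen branch of $W^u(p_+)$ from the $y_2$-axis; we plan to establish this by combining explicit information at distinguished values (for instance $\nu_2=0$, where the eigenvalues and eigenvectors are available in closed form and produce saddle-foci with indices of opposite type) with continuity arguments on the one-dimensional unstable manifold as $\nu_2$ varies along the arc.

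To pass from $\varepsilon=0$ to $\varepsilon>0$, we set up a bifurcation equation for heteroclinic orbits in the spirit of Melnikov and Palmer \cite{Palmer84}, using exponential dichotomies along the limit heteroclinic orbits as in \cite{IbanezRodriguez}. The transverse $W^u(p_-)\cap W^s(p_+)$ connection persists in an open region, while the fragile $W^u(p_+)\cap W^s(p_-)$ connection is governed by a single scalar Melnikov-type integral whose derivative with respect to $\nu_3$ at $(\nu_1^\ast,\nu_2^\ast,0)$ we expect to be nonzero. The implicit function theorem then produces, for each small $\varepsilon>0$, a smooth curve in $\mathbb{S}^2$ along which the Bykov cycle of $X_\varepsilon$ persists; the saddle-focus property and the unequal stability indices are open conditions and are inherited along this curve. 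The main obstacle is the limit analysis: unlike for $n=4$, where the limit family on $\mathcal{T}$ is hamiltonian and the existence of the limit connection can be imported from Amick-Toland or Hofer-Toland, for $n=3$ no variational structure is available, so the required sign change of $\Delta(\nu_2)$ must be extracted by a detailed phase-space analysis, and the non-vanishing of the Melnikov derivative is an additional technical step to verify.
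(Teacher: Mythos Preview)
Your proposal has a genuine gap in the codimension count for the fragile heteroclinic connection. In $\mathbb{R}^3$ the connection $\Gamma_1\subset W^u(p_+)\cap W^s(p_-)$ joins two \emph{one-dimensional} invariant manifolds, so by Proposition~\ref{not:3} the adjoint variational equation along $\Gamma_1$ has $d=3-1-1+1=2$ linearly independent bounded solutions, and the bifurcation equation $\xi^\infty(\lambda)=0$ is $\mathbb{R}^2$-valued, not scalar. A ``single scalar Melnikov-type integral'' is therefore not enough, and the conclusion you draw---a curve in $\mathbb{S}^2$ for each fixed $\varepsilon$---is incorrect: the correct output is an isolated point in $\mathbb{S}^2$ for each small $\varepsilon$, tracing a curve $\gamma=\{(c(\varepsilon),\nu(\varepsilon),\varepsilon)\}$ in the full three-parameter space transverse to $\{\varepsilon=0\}$. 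The Implicit Function Theorem then requires $\mathrm{rank}\,D_\lambda\xi^\infty(0)=2$, a $2\times 3$ rank condition, not a single nonzero derivative. (Your scalar count is valid only \emph{within} the reversible subfamily, where symmetry halves the problem; but the $O(\varepsilon)$ term $\kappa y_1y_2$ breaks reversibility, so the full codimension-two analysis is unavoidable.)

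A second, related difficulty is that the paper does not obtain the limit Bykov cycle by a sign-change shooting argument as you outline. It imports from \cite{Kuramoto} an \emph{explicit} heteroclinic solution $p(t)$ of the Michelson system at $c=c_k=15\sqrt{22/19^3}$ (see (M4) and~(\ref{14*})), and from \cite{IbanezRodriguez} the topologically transverse two-dimensional connection $\Gamma_2$. The explicit formula for $p(t)$ is essential for what follows: the parities of $p_1,p_2,p_3$ and of two carefully chosen bounded solutions $\varphi,\psi$ of the adjoint equation force three of the six entries of $D_\lambda\xi^\infty(0)$ to vanish, and the remaining three entries $\xi_{1,\lambda_2},\xi_{1,\lambda_3},\xi_{2,\lambda_1}$ are shown to be nonzero in Appendix~\ref{apendixB} by rigorous numerical estimation of integrals built from the explicit $p(t)$. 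Without a closed-form $p(t)$ your shooting approach would locate the cycle but would leave you no handle on these integrals, and hence no way to verify the rank-two condition that drives the Implicit Function Theorem.
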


For $n=3$ and when $\nu _{1}<0$ and $\nu _{2}<0$, the family (\ref{eq:intro1}) can be
transformed into the family
\begin{equation}
\label{eq:intro3}
x_{2}\frac{\partial }{\partial x_{1}}+x_{3}\frac{\partial }{\partial x_{2}}%
+(c^{2}-x_{2}+\nu x_{3}-x_{1}^{2}/2+O(\varepsilon ))\frac{\partial }{%
\partial x_{3}}
\end{equation}
where $c^{2}=2\nu _{1}/\nu _{2}^{3}$ and $\nu =\nu _{3}/(-1/\nu
_{2})^{1/2}$. When $\nu =0$ this limit family is equivalent to the
third-order equation \ $x^{\prime \prime \prime}(t)+x^{\prime
}(t)+x(t)^{2}/2=c^{2}$, which has been studied extensively in the
literature (see \cite{Kuramoto,lau,mic} and references therein)
because it plays a very relevant role in the study of the
existence of steady solutions and travelling waves of the
Kuramoto-Shivashinsky $u_{t}+u_{xxx}+u_{xx}+u_{x}^{2}/2=0$. For
each value of $c$, family (\ref{eq:intro3}) has two saddle-focus
equilibria $P_{\pm }$. In particular, the eigenvalues $\lambda$
and $-\rho\pm i\omega$ at $P_{-}$ satisfy that  $0<\rho<\lambda$ ,
that is, the spectral assumptions in Shil'nikov's theorem. In
\cite{Kuramoto} it is proved the existence of a heteroclinic
connection $\Gamma _{1}=W^{u}(P_{-})\cap W^{s}(P_{+})$ when
$c=c_{k}=15\sqrt{22/19^{3}}$. For this same value of $c$, it was
proved in \cite{IbanezRodriguez} the existence of a topologically
transverse intersection $\Gamma_{2}$ between the two-dimensional
invariant manifolds $W^{u}(P_{+})$ and $W^{s}(P_{-})$. Therefore,
$\Gamma =\Gamma _{1}\cup \Gamma_{2}\cup\{P_-,P_+\}$ is a
heteroclinic cycle, known in the literature as \emph{Bykov cycle}.
Bifurcations arising from the breaking of this codimension two
heteroclinic cycle have been widely studied in the literature
\cite{dumibakok2001,fernandez}. In particular, the birth of
homoclinic orbits was considered in \cite{dumibakok2001} for the
case which concerns us. It follows from these papers that in any
generic 2-parametric unfolding of a Bykov cycle there exist two
homoclinic bifurcation curves, which spiral in to the Bykov cycle
bifurcation point. Genericity can be read in terms of the
transversality between the 2-dimensional invariant manifolds,
which can be replaced by the condition of topological
transversality, and the generic splitting of the connection along
the 1-dimensional invariant manifolds (details can be seen in
\cite{dumibakok2001,fernandez,IbanezRodriguez}).

In this paper we will prove that family (\ref{eq:intro3}) exhibits
topological Bykov cycles for parameters along a curve
$\gamma=\{(c(\varepsilon),\nu(\varepsilon),\varepsilon)\, : \,
\varepsilon\in [0,\varepsilon_0)\}$ for some $\varepsilon_0>0$
with $c(0)=c_k$ and $\nu(0)=0$. Fixing a section $\Sigma$
transverse to $\Gamma_1$ we consider the splitting function
$h(c,\nu,\varepsilon)=(h_1(c,\nu,\varepsilon),h_2(c,\nu,\varepsilon))$
defined as the distance between $W^u(P_-(\tau))\cap\Sigma$ and
$W^s(P_+(\tau))\cap\Sigma$ for $\tau=(c,\nu,\varepsilon)$ close
enough to $\tau_k=(c_k,0,0)$. Note that $h$ take values on
$\mathbb{R}^2$ and existence of Bykov cycle is equivalent to
$h(c,\nu,\varepsilon)=(0,0)$. We will prove that
\begin{equation}
\label{intro:generic_condition}
\left|\begin{array}{ll}
\displaystyle{\frac{\partial h_1}{\partial c}(\tau_k)} & \displaystyle{\frac{\partial h_1}{\partial \nu}(\tau_k)}
\\
\\
\displaystyle{\frac{\partial h_2}{\partial c}(\tau_k)} & \displaystyle{\frac{\partial h_2}{\partial \nu}(\tau_k)}
\end{array}\right|\neq 0.
\end{equation}
Hence the existence of $\gamma$, and therefore Theorem~\ref{thmC},
follows from the Implicit Function Theorem.

We should remark that the above generic condition guarantees the
existence of Shil'nikov homoclinic orbits, in the sense of Theorem
A. Indeed, for each $\varepsilon>0$ small enough and fixed,
(\ref{intro:generic_condition}) implies that the splitting
function is a local diffeomorphism or, in other words, that the
splitting of the connection along the 1-dimensional invariant
manifolds is generic. Therefore there exists a Shil'nikov
bifurcation surface shaped as a scroll around $\gamma$. In the
proof of Theorem~\ref{theoremA} given in \cite{IbanezRodriguez},
for the sake of brevity, we did not include the computation of the
above generic condition, although we appealed to it. Since the
publication of that paper such computation has been frequently
demanded to us.

The paper is organized as follows. In \S\ref{sec:dico} we include
a brief summary of results about dichotomies in order to get a
precise formulation of the bifurcation equations which are
required in the subsequent sections to prove the existence of
heteroclinic and homoclinic orbits. \S\ref{section:nilpotent-n} is
devoted to introduce nilpotent singularities on $\mathbb{R}^{n}$
and the limit families obtained after rescaling properly a generic
unfolding. There we also state that when $n$ is an even number and
the parameter values belong to the reversibility set, the limit
family consists of hamiltonian vector fields. Theorem~\ref{thmB}
and Theorem~\ref{thmC} are proved in \S\ref{section:nilpotent-4}
and \S\ref{section:nilpotent-3}, respectively.

\vspace{-0.6cm}
\section{Dichotomies and bifurcation equations.} \label{sec:dico}
Let $x^{\prime}=f(x)$ be a nonlinear equation, where $x\in \mathbb{R}^{n}$ and $f$ is a regular enough vector field, and assume that it has a heteroclinic orbit $\gamma =\left\{ p(t):t\in \mathbb{R}\right\} $ connecting two hyperbolic equilibrium points $p_{+} $ and $p_{-}$ (if $p_{+}=p_{-}$, $\gamma$ is said homoclinic). Consider a family
\begin{equation}
\label{*1}
x^{\prime }=f(x)+g(\lambda,x),
\end{equation}
with $\lambda \in \mathbb{R}^{k}$ and $g$ regular enough, such that $g(0,x)=0$. For any $\lambda $ small enough, family (\ref{*1}) has hyperbolic equilibrium points $p_{+}(\lambda )$ and $p_{-}(\lambda )$, conti\-nuation of $p_{+}$ and $p_{-}$, respectively, and the stability index is preserved. In order to study the persistence of the heteroclinic orbit for $\lambda$ small enough we introduce the change of variables $x(t)=z(t)+p(t)$ in (\ref{*1}) to obtain
\begin{equation}
\label{*2}
z^{\prime }(t)=Df(p(t))z(t)+b(\lambda ,t,z(t)),
\end{equation}
where
\begin{equation*}
b(\lambda ,t,z(t))=f(p(t)+z(t))-f(p(t))-Df(p(t))z(t)+g(\lambda,p(t)+z(t)).
\end{equation*}%
Notice that $b(0,t,0)=D_{z}b(0,t,0)=0$ for all $t\in \mathbb{R}$.

Persistence of heteroclinic orbits in (\ref{*1}) implies the existence of bounded solutions for (\ref{*2}) which, in turn, implies the existence of bounded solutions for a equation as
\begin{equation}
\label{*3}
z^{\prime }(t)=Df(p(t))z(t)+b(t),
\end{equation}%
where $b$ belongs to the space
$C_{b}^{0}(\mathbb{R},\mathbb{R}^{n})$. In the sequel
$C_{b}^{k}(\mathbb{R},\mathbb{R}^{n})$ denotes the Banach space of
bounded continuous $\mathbb{R}^n$-valued functions whose
derivatives up to order $k$ exist and are bounded and continuous.
The existence of bounded solutions of a linear equation
$x^{\prime}=A(t)x+b(t)$, as that in (\ref{*3}), will be given in
terms of exponential dichotomies of the homogeneous equation
$x^{\prime}=A(t)x$ and its adjoint $w^{\prime}=-A(t)^{\ast}w$,
where $A(t)^{\ast}$ denotes the conjugate transpose of $A(t)$. The
classical references for the study of exponential dichotomies are
\cite{Massera-Schaffer,Coppel,Palmer84,Palmer00}.

\subsection{Exponential dichotomy}

Let $X(t)$ be a fundamental matrix of
\begin{equation}
\label{eq:2.1}
x^{\prime }=A(t)x,\qquad x\in \mathbb{R}^{n},
\end{equation}
where $A(t)$ is defined and continuous on an interval $J\subseteq \mathbb{R}$.

\begin{defi}
\label{def:dicotomia} It is said that the equation (\ref{eq:2.1})
has an exponential dichotomy on $J$ if there exists a projection
$P:\mathbb{R}^n\to\mathbb{R}^n$, that is, an $n$ by $n$ matrix $P$
with $P^2=P$, and positive constants $K$, $L$, $\alpha$ and
$\beta$ such that for every $s,t\in J$,
\begin{equation}
\label{eq:2.2}
\begin{alignedat}[c]{2}
\|X(t)PX^{-1}(s)\|&\leq K e^{-\alpha(t-s)} \quad && \text{for} \ t\geq s,
\\ \|X(t)(I-P)X^{-1}(s)\|&\leq L e^{-\beta(s-t)}\quad && \text{for} \ s\geq t.
\end{alignedat}
\end{equation}

\end{defi}

Let us define $\mathscr{P}(s)=X(s)PX^{-1}(s)$ for each $s\in J$. Notice that, according with the above definition, $\mathscr{P}(s)$ is the projection corresponding to the fundamental matrix $Y(t)=X(t)X^{-1}(s)$ of (\ref{eq:2.1}) and we can give an alternative definition of exponential dichotomy.
\begin{defi}
\label{eq:2.3}
It is said that the equation (\ref{eq:2.1}) has an exponential dichotomy on $J$ if for all $s\in J$ there exists a projection $\mathscr{P}(s):\mathbb{R}^n \to \mathbb{R}^n$ and positive constants $K$, $L$, $\alpha$ and $\beta$ independents of $s$ such that for all $t\in J$ the matrix $X^{-1}(t)\mathscr{P}(t)X(t)$ has constant coefficients and
\begin{equation*}
\begin{alignedat}[c]{2}
\|X(t)X^{-1}(s)\mathscr{P}(s)\|&\leq K e^{-\alpha(t-s)} \quad & &\text{for all} \ t\geq s, \\
\|X(t)X^{-1}(s)(I-\mathscr{P}(s))\|&\leq L e^{-\beta(s-t)} \quad & &\text{for all} \ s\geq t.
\end{alignedat}
\end{equation*}
\end{defi}

Although the notion of exponential dichotomy is stated for any $J\subseteq\mathbb{R}$, the most interesting cases are when $J$ is not bounded. We are particularly interested in $J=[\tau,\infty)$ or $J=(-\infty,\tau]$. In such cases the notions of stable and unstable subspaces can be introduced in terms of the ranges of the projections of the exponential dichotomies.

\begin{defi}
\label{def:Es} Suppose that the matrix $A(t)$ in \eqref{eq:2.1} is
defined and continuous on $J=[\tau,\infty)$ (resp.
$J=(-\infty,\tau]$). For each $t_0\in J$ the stable (resp.
unstable) subspace for initial time $t=t_0$ is defined as
\begin{align*}
E^{s}_{t_0}&=\{\xi \in \mathbb{R}^n: \|X(t)X^{-1}(t_0)\xi\|\to 0 \ \mathrm{when} \ t\to \infty \} \\
(\text{resp. } E^{u}_{t_0}&=\{\xi\in \mathbb{R}^n: \|X(t)X^{-1}(t_0)\xi\|\to 0 \ \mathrm{when} \ t\to -\infty \}).
\end{align*}
\end{defi}

Below we give a collection of results which can be helpful to follow the paper. Their proofs are available in the literature.

\begin{prop}
\label{lem:1}
Suppose that the equation $x^{\prime}=A(t)x$ has an exponential dichotomy on $J$.
\begin{enumerate}
\item[i)] When $J=[\tau,\infty)$, $E^s_{t_0}$ coincides with the range $\mathcal{R}(\mathscr{P}(t_0))$ of $\mathscr{P}(t_0)$ for all $t_0\in J$. Furthermore
\begin{equation*}
\mathcal{R}(\mathscr{P}(t_0))= \{\xi\in\mathbb{R}^n: \ \sup_{t\geq t_0}\|X(t)X^{-1}(t_0) \xi\| < \infty \},
\end{equation*}
and for all $t_0, t_1 \in J$ it follows that $E^s_{t_1}=X(t_1)X^{-1}(t_0)E^s_{t_0}$.

\item[ii)] When $J=(-\infty,\tau]$, $E^u_{t_0}$ coincides with the kernel $\mathcal{N}(\mathscr{P}(t_0))$ of $\mathscr{P}(t_0)$ for all $t_0\in J$. Furthermore
\begin{equation*}
\mathcal{N}(\mathscr{P}(t_0))= \{\xi\in\mathbb{R}^n: \ \sup_{t\leq t_0}\|X(t)X^{-1}(t_0) \xi\| < \infty \},
\end{equation*}
and for all $t_0, t_1 \in J$ it follows that $E^u_{t_1}=X(t_1)X^{-1}(t_0)E^u_{t_0}$.
\end{enumerate}
\end{prop}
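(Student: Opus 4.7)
The plan is to prove part (i); part (ii) follows by reversing time, i.e. by replacing $A(t)$ with $-A(-t)$, which converts $(-\infty,\tau]$ into $[-\tau,\infty)$ and exchanges the roles of $\mathscr{P}$ and $I-\mathscr{P}$. The central tool, which I would isolate first as a preliminary observation, is the invariance identity
\[
\mathscr{P}(t)\,X(t)X^{-1}(t_0)=X(t)X^{-1}(t_0)\,\mathscr{P}(t_0),
\]
which is just a restatement of the requirement in Definition~\ref{eq:2.3} that $X^{-1}(t)\mathscr{P}(t)X(t)$ be constant in $t$. Consequently both $\mathcal{R}(\mathscr{P}(t))$ and $\mathcal{N}(\mathscr{P}(t))$ are carried to their counterparts at time $t$ by $X(t)X^{-1}(t_0)$.

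For the inclusion $\mathcal{R}(\mathscr{P}(t_0))\subseteq E^{s}_{t_0}$, I take $\xi\in\mathcal{R}(\mathscr{P}(t_0))$, write $\xi=\mathscr{P}(t_0)\xi$, and apply the first dichotomy bound of Definition~\ref{eq:2.3} with $s=t_0$ to get $\|X(t)X^{-1}(t_0)\xi\|\leq K e^{-\alpha(t-t_0)}\|\xi\|$. This yields simultaneously the decay as $t\to\infty$ (so $\xi\in E^{s}_{t_0}$) and the boundedness of the forward orbit.

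The reverse inclusion is the step I expect to be the main obstacle, because it requires turning the \emph{upper} bound on $(I-\mathscr{P})$ into an \emph{exponential growth} statement. I would argue that any nonzero $\eta\in\mathcal{N}(\mathscr{P}(t_0))$ produces an unbounded forward orbit: by the invariance observation, $\zeta(t):=X(t)X^{-1}(t_0)\eta$ lies in $\mathcal{N}(\mathscr{P}(t))$, so $\zeta(t)=(I-\mathscr{P}(t))\zeta(t)$. Applying the second dichotomy bound with its $s$ and $t$ set to my $t$ and $t_0$ respectively (valid because $t\geq t_0$),
\[
\|\eta\|=\bigl\|X(t_0)X^{-1}(t)(I-\mathscr{P}(t))\zeta(t)\bigr\|\leq L e^{-\beta(t-t_0)}\|\zeta(t)\|,
\]
so $\|\zeta(t)\|\geq L^{-1}e^{\beta(t-t_0)}\|\eta\|$, which is unbounded. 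Given an arbitrary $\xi\in\mathbb{R}^n$, the decomposition $\xi=\mathscr{P}(t_0)\xi+(I-\mathscr{P}(t_0))\xi$ then has a decaying first summand and, if the second summand is nonzero, an exponentially growing second summand whose growth cannot be cancelled by the first. Hence bounded forward orbit (a fortiori, $\xi\in E^{s}_{t_0}$) forces $(I-\mathscr{P}(t_0))\xi=0$, i.e. $\xi\in\mathcal{R}(\mathscr{P}(t_0))$. This simultaneously establishes the equality $E^{s}_{t_0}=\mathcal{R}(\mathscr{P}(t_0))$ and the characterization $\mathcal{R}(\mathscr{P}(t_0))=\{\xi:\sup_{t\geq t_0}\|X(t)X^{-1}(t_0)\xi\|<\infty\}$.

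The transport identity $E^{s}_{t_1}=X(t_1)X^{-1}(t_0)E^{s}_{t_0}$ is then immediate: from the semigroup relation $X(t)X^{-1}(t_1)=[X(t)X^{-1}(t_0)][X(t_0)X^{-1}(t_1)]$, a vector $\zeta$ has bounded forward orbit from time $t_1$ iff $X(t_0)X^{-1}(t_1)\zeta$ has bounded forward orbit from time $t_0$, which by the previous step is equivalent to $X(t_0)X^{-1}(t_1)\zeta\in E^{s}_{t_0}$. Part (ii) is then the same argument after the time-reversal described at the outset, with the first and second dichotomy estimates swapping roles.
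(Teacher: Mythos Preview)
Your argument is correct and is the standard one found in the references the paper cites (Coppel, Palmer). Note that the paper itself does \emph{not} supply a proof of this proposition: immediately after stating it, the authors write ``Below we give a collection of results which can be helpful to follow the paper. Their proofs are available in the literature.'' So there is no in-paper proof to compare against.

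That said, your key step---inverting the second dichotomy estimate to obtain the exponential \emph{lower} bound $\|\zeta(t)\|\geq L^{-1}e^{\beta(t-t_0)}\|\eta\|$ for $\eta\in\mathcal{N}(\mathscr{P}(t_0))$---is exactly the device the authors themselves deploy later in Appendix~C (proof of Lemma~\ref{lem2:2.1}), so your approach is fully consistent with the paper's toolkit. One cosmetic remark: the transport identity $E^{s}_{t_1}=X(t_1)X^{-1}(t_0)E^{s}_{t_0}$ also follows in one line from your invariance observation $\mathcal{R}(\mathscr{P}(t_1))=X(t_1)X^{-1}(t_0)\mathcal{R}(\mathscr{P}(t_0))$ together with the equality $E^{s}_{t}=\mathcal{R}(\mathscr{P}(t))$ you have just established, which is slightly cleaner than routing through the bounded-orbit characterization.
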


From the above proposition it follows that the linear flow sends $E_{t_{0}}^{s}$ and $E_{t_{0}}^{u}$ to
$E_{t_{1}}^{s}$ and $E_{t_{1}}^{u}$, respectively. Accordingly, once $E_{t_{0}}^{s}$ and $E_{t_{0}}^{u}$ are fixed, the stable and unstable subspaces are determined for all $t$. Therefore, the projections are also determined for each $t\in J$ once they are defined for $t=t_{0}$. The same observation follows taking into account the uniqueness of solutions for the equation
\begin{equation*}
\mathscr{P}^{\prime }(s)=X^{\prime }(s)PX^{-1}(s)+X(s)P(X^{-1}(s))^{\prime
}=A(s)\mathscr{P}(s)-\mathscr{P}(s)A(s).
\end{equation*}

\begin{lem}
\label{lem:Coppel} If the linear homogeneous equation $x^{\prime}=A(t)x$, with $t\in (-\infty,\infty)$, has exponential dichotomy $[\tau, \infty)$ (resp. $(-\infty, \tau]$) for some $\tau \in \mathbb{R}$ then it has exponential dichotomy on $[t_0,\infty)$ (resp. $(-\infty,t_0]$) for all $t_0\in\mathbb{R}$.
\end{lem}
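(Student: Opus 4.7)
The plan is to reduce the problem to two subcases according to whether $t_0\geq \tau$ or $t_0<\tau$. Since the argument for the interval $(-\infty,t_0]$ is symmetric, I focus on the case $[t_0,\infty)$.

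First, if $t_0\geq \tau$ then $[t_0,\infty)\subseteq[\tau,\infty)$, and simply restricting the projection and keeping the same constants $K,L,\alpha,\beta$ from Definition~\ref{def:dicotomia} immediately gives an exponential dichotomy on $[t_0,\infty)$. The nontrivial case is $t_0<\tau$. Let $X(t)$ be a fundamental matrix and let $P$ be the constant projection giving the exponential dichotomy on $[\tau,\infty)$, so that $\mathscr{P}(s)=X(s)PX^{-1}(s)$ for $s\geq\tau$. I would extend the projection to all of $[t_0,\infty)$ by the \emph{same} formula $\mathscr{P}(s)=X(s)PX^{-1}(s)$. Since $P^2=P$, this is automatically a projection for every $s$, and $X^{-1}(t)\mathscr{P}(t)X(t)=P$ is constant, as required by Definition~\ref{eq:2.3}. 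Because $A$ is continuous on $\mathbb{R}$, both $X$ and $X^{-1}$ are bounded on the compact set $[t_0,\tau]$, and this is the single piece of new information that drives the extension.

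The second step is to verify the two decay estimates on $[t_0,\infty)$. For the first one I would split the pair $(s,t)$ with $t\geq s\geq t_0$ into three subcases: (a) both $s,t\in[\tau,\infty)$, where the original estimate applies unchanged; (b) both $s,t\in[t_0,\tau]$, where $\|X(t)X^{-1}(s)\mathscr{P}(s)\|$ is uniformly bounded by some constant $M$ on the compact square, and the bound $M\leq Me^{\alpha(\tau-t_0)}e^{-\alpha(t-s)}$ holds because $t-s\leq\tau-t_0$; and (c) the crossing case $t_0\leq s\leq\tau\leq t$, handled by factoring
\begin{equation*}
X(t)X^{-1}(s)\mathscr{P}(s)=\bigl[X(t)X^{-1}(\tau)\mathscr{P}(\tau)\bigr]\bigl[X(\tau)X^{-1}(s)\bigr],
\end{equation*}
applying the original exponential bound $Ke^{-\alpha(t-\tau)}$ to the first factor, a compact-set uniform bound $M_1$ to the second, and then absorbing the lost weight via $e^{-\alpha(t-\tau)}=e^{\alpha(\tau-s)}e^{-\alpha(t-s)}\leq e^{\alpha(\tau-t_0)}e^{-\alpha(t-s)}$. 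The estimate involving $I-\mathscr{P}$ (with $s\geq t$) is handled identically by the same trichotomy, swapping the roles of $s$ and $t$ and using $L,\beta$ in place of $K,\alpha$. Taking the maximum of the constants produced in each subcase yields new constants $K',L'$ while preserving the original rates $\alpha,\beta$.

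The main obstacle is the crossing subcase (c): one must confirm that the resulting constants depend only on $\tau$, $t_0$, the compact-interval bound on $X$ and $X^{-1}$, and on $K,L,\alpha,\beta$, and \emph{not} on the particular $s,t$. This uniformity is exactly what lets the compact-interval information be absorbed into enlarged multiplicative constants, and it is the technical reason Lemma~\ref{lem:Coppel} holds; once this is checked, the other subcases are essentially bookkeeping.
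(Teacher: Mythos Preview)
Your argument is correct and is essentially the standard proof of this classical fact. Note, however, that the paper does not actually supply its own proof of Lemma~\ref{lem:Coppel}: it is listed among the ``collection of results which can be helpful to follow the paper'' whose ``proofs are available in the literature,'' and the label points to Coppel's monograph~\cite{Coppel}. So there is nothing in the paper to compare against; your compact-interval extension via the three subcases (both times large, both times small, and the crossing case handled by factoring through $t=\tau$) is precisely the textbook route, and your observation that the new constants depend only on $\tau$, $t_0$, and the bounds for $X$, $X^{-1}$ on $[t_0,\tau]$ is the right uniformity check.
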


The next result \cite[Lemma 7.4]{Palmer00} states that exponential dichotomy is a robust property with respect to small enough perturbations of $A(t)$.

\begin{prop}
\label{thm:roughness} Suppose that $x^{\prime}=A(t)x $ has an
exponential dichotomy on $J=[a,b]$ (with $-\infty \leq a <b\leq
\infty$) with projection matrix function $\mathscr{P}(t)$, with
constants $K_1$, $K_2$ and exponents $\alpha_1$, $\alpha_2$. Let
$\beta_1$ and $\beta_2$ be such that $0<\beta_1<\alpha_1$ and
$0<\beta_2<\alpha_2$. Then there exists
$\delta_0=\delta_0(K_1,K_2,\alpha_1,\alpha_2,\beta_1,\beta_2)>0$
such that if $B(t)$ is a continuous matrix function with
$$
\|B(t)\|\leq\delta_t \leq \delta_0 \quad \text{for all $t \in J$,}
$$
the perturbed system
\begin{equation*}
x^{\prime}=[A(t)+B(t)]x
\end{equation*}
has an exponential dichotomy on $J$ with constants $L_1$, $L_2$
exponents $\beta_1$, $\beta_2$ and projection matrix
$\mathscr{Q}(t)$ satisfying that
$$\|\mathscr{Q}(t)-\mathscr{P}(t)\| \leq N \delta_t,
$$ where $L_1$,
$L_2$ and $N$ are constants which only depend on $K_1$, $K_2$,
$\alpha_1$ and $\alpha_2 $.
\end{prop}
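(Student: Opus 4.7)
The plan is to prove roughness via the classical variation-of-constants plus Banach fixed-point argument in weighted function spaces, following Coppel's approach as refined by Palmer. Let $X(t)$ be a fundamental matrix of $x'=A(t)x$ and introduce its Green's function
$$G(t,s)=\begin{cases} X(t)X^{-1}(s)\mathscr{P}(s) & \text{if } t\geq s, \\ -X(t)X^{-1}(s)(I-\mathscr{P}(s)) & \text{if } t<s, \end{cases}$$
which, by the dichotomy hypothesis, satisfies $\|G(t,s)\|\leq K_1 e^{-\alpha_1(t-s)}$ for $t\geq s$ and $\|G(t,s)\|\leq K_2 e^{-\alpha_2(s-t)}$ for $s\geq t$. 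A continuous bounded function $y$ on $J$ is a solution of $y'=[A(t)+B(t)]y$ with prescribed stable component $\xi\in\mathcal{R}(\mathscr{P}(t_0))$ at a base point $t_0$ if and only if
$$y(t)=X(t)X^{-1}(t_0)\xi+\int_J G(t,s)B(s)y(s)\,\mathrm{d}s.$$

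First I would build the perturbed stable subspace at $t_0$. Working on $[t_0,\infty)\cap J$ in the Banach space $E^+$ of continuous $\mathbb{R}^n$-valued functions with finite norm $\|y\|_+=\sup e^{\beta_1(t-t_0)}\|y(t)\|$, I define $T_\xi$ by the right-hand side of the integral equation (restricted to this half-line). Splitting the integral at $s=t$ and using the exponent gaps $\beta_1<\alpha_1$, $\beta_2<\alpha_2$ produces a Lipschitz bound $\|T_\xi y_1-T_\xi y_2\|_+\leq C\delta_0\|y_1-y_2\|_+$, with $C=C(K_1,K_2,\alpha_1,\alpha_2,\beta_1,\beta_2)$. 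For $\delta_0$ small enough $T_\xi$ is a contraction; its unique fixed point $y_\xi$ depends linearly on $\xi$ and inherits the weight, giving $\|y_\xi(t)\|\leq L_1 e^{-\beta_1(t-t_0)}\|\xi\|$. Hence $\widetilde E^s_{t_0}=\{y_\xi(t_0):\xi\in\mathcal{R}(\mathscr{P}(t_0))\}$ is the perturbed stable subspace at $t_0$, and the symmetric argument on $(-\infty,t_0]\cap J$ with weight $e^{-\beta_2(t-t_0)}$ and data in $\mathcal{N}(\mathscr{P}(t_0))$ furnishes the perturbed unstable subspace $\widetilde E^u_{t_0}$. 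Both subspaces are graphs of Lipschitz maps of norm $\mathcal{O}(\delta_0)$ over their unperturbed counterparts, so for $\delta_0$ small they are transverse; I define $\mathscr{Q}(t_0)$ as the projection onto $\widetilde E^s_{t_0}$ along $\widetilde E^u_{t_0}$ and propagate $\mathscr{Q}(t)$ by the perturbed flow. The dichotomy estimates with exponents $\beta_1,\beta_2$ and constants $L_1,L_2$ then drop out of the weighted fixed-point bounds.

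The main obstacle is sharpening the easy uniform bound $\|\mathscr{Q}(t)-\mathscr{P}(t)\|=\mathcal{O}(\delta_0)$ to the pointwise estimate $\|\mathscr{Q}(t)-\mathscr{P}(t)\|\leq N\delta_t$. Running the contraction at base point $t_0=t$, the graph formula for $\widetilde E^s_t$ gives
$$(I-\mathscr{P}(t))\,y_\xi(t)=-\int_t^{\infty}X(t)X^{-1}(s)(I-\mathscr{P}(s))B(s)y_\xi(s)\,\mathrm{d}s,$$
and an analogous expression holds on the unstable side, so that $\mathscr{Q}(t)-\mathscr{P}(t)$ is representable as a Neumann series of iterated integrals of $G(t,s_1)B(s_1)G(s_1,s_2)B(s_2)\cdots$. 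Bounding every factor of $B$ beyond the first by $\delta_0$ and using the exponential concentration of $G(t,s)$ near $s=t$ to transfer the pointwise size $\|B(s_1)\|\leq\delta_{s_1}$ into a factor $\delta_t$ (via the continuity of $B$ and the integrability of the kernels against the bound $\delta_s\leq\delta_0$) yields the stated pointwise estimate. This is the step where the argument genuinely uses the pointwise hypothesis rather than only $\|B\|_\infty\leq\delta_0$, and is the part I would handle with the most care, since precisely this sharp pointwise dependence will be exploited in \S\ref{section:nilpotent-4} and \S\ref{section:nilpotent-3} to control the perturbed dichotomies along the heteroclinic orbits produced by the rescaling of the nilpotent unfoldings.
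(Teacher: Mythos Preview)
The paper does not supply its own proof of this proposition: it is quoted verbatim from the literature (the sentence immediately preceding it reads ``The next result \cite[Lemma 7.4]{Palmer00} states that exponential dichotomy is a robust property\ldots'', and earlier the authors say that for this block of results ``their proofs are available in the literature''). So there is no in-paper argument to compare against; your outline via the Green's function, variation of constants, and a contraction in an exponentially weighted space is exactly the classical Coppel--Palmer proof and is correct for the existence of the perturbed dichotomy with exponents $\beta_1,\beta_2$, constants $L_1,L_2$, and a \emph{uniform} bound $\|\mathscr{Q}(t)-\mathscr{P}(t)\|\leq N\delta_0$.

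There is, however, a genuine gap in your treatment of the sharp pointwise bound $\|\mathscr{Q}(t)-\mathscr{P}(t)\|\leq N\delta_t$. As written, the statement allows $\delta_t=\|B(t)\|$, and then the conclusion would force $\mathscr{Q}(t_0)=\mathscr{P}(t_0)$ at any point where $B(t_0)=0$. That is false: take $A$ constant hyperbolic and $B(t)$ a nonzero off-diagonal perturbation vanishing only at the single instant $t_0$; the perturbed stable subspace at $t_0$ is determined by the values of $B$ on the whole half-line and is generically tilted away from the unperturbed one. Your proposed mechanism---``the exponential concentration of $G(t,s)$ near $s=t$'' together with ``continuity of $B$''---cannot manufacture a factor $\delta_t$ out of $\int G(t,s)B(s)\,ds$ when $\delta_s$ is an arbitrary pointwise majorant, and the Neumann-series argument you sketch only delivers $N\delta_0$. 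In short, the pointwise inequality is not provable at this level of generality; what one actually gets (and what Palmer proves) is the uniform estimate, plus the consequence that if $\delta_t\to 0$ at an end of $J$ then $\|\mathscr{Q}(t)-\mathscr{P}(t)\|\to 0$ there. That weaker conclusion is all the paper uses afterwards (see the paragraph following the proposition and Proposition~\ref{lem:conexion}), so the defect is in the formulation of the quoted statement rather than in anything the later sections rely on. Your write-up is fine once you replace the unattainable pointwise claim by the uniform one.
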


From the above result and Lemma \ref{lem:Coppel} it  follows the
existence of an exponential dichotomy for the homogeneous part
$z^{\prime }=Df(p(t))z$ of the equation~(\ref{*3}). Since
$$
\lim_{t\rightarrow \infty }p(t)=p_{+} \quad \text{and} \quad
\lim_{t\rightarrow -\infty }p(t)=p_{-}
$$
and according to
Proposition~\ref{thm:roughness}, the equation $x^{\prime
}=Df(p(t))x$ has the same exponential dichotomy than  $x^{\prime
}=Df(p_{+})x$ (resp. $x^{\prime }=Df(p_{-})x$) on $[t_{0},\infty
)$ (resp. $(-\infty ,t_{0}]$). That is, if the stable (resp.
unstable) subspace of $x^{\prime}=Df(p_{+})x$ (resp. $x^{\prime
}=Df(p_{-})x$) has dimension $k$ then $ x^{\prime }=Df(p(t))x$ has
an exponential dichotomy on $[t_{0},\infty )$ (resp. $(-\infty
,t_{0}]$) with stable subspace $E_{t_{0}}^{s}$ (resp. unstable
subspace $E_{t_{0}}^{u}$) with dimension $k$. In fact we have the
following result:

\begin{prop}
\label{lem:conexion} Let $p(t)$ be a solution of the equation
$x^{\prime}=f(x)$ parametrizing an orbit on the stable (resp.
unstable) manifold of an equilibrium point $p$. Hence the
variational equation $x^{\prime }=Df(p(t))x$ has exponential
dichotomy on $[t_{0},\infty )$ (resp. $(-\infty,t_{0}]$).
Moreover,
$$
\mathcal{R}(\mathscr{P}(t_{0}))=T_{p(t_{0})}W^{s}(p) \quad
\text{(resp.
$\mathcal{N}(\mathscr{P}(t_{0}))=T_{p(t_{0})}W^{u}(p)$).}
$$
\end{prop}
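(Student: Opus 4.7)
The idea is to view $x'=Df(p(t))x$ as a small perturbation of the constant-coefficient hyperbolic system $x'=Df(p)x$ and invoke the roughness theorem, then identify the range of the dichotomy projection with the tangent space to the invariant manifold.

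First, because $p(t)$ lies on $W^{s}(p)$, we have $p(t)\to p$ as $t\to\infty$, so by continuity of $Df$ the matrix $B(t):=Df(p(t))-Df(p)$ satisfies $\|B(t)\|\to 0$. Hyperbolicity of $p$ implies that $x'=Df(p)x$ has an exponential dichotomy on $\mathbb{R}$ with some constants $(K_{i},\alpha_{i})$ and constant projection $P_{0}$ onto the generalized stable subspace of $Df(p)$. Choose $\tau$ so large that $\|B(t)\|\leq\delta_{0}$ for $t\geq\tau$, with $\delta_{0}$ the constant given by Proposition \ref{thm:roughness}; the roughness theorem then yields an exponential dichotomy for the variational equation on $[\tau,\infty)$ with projection $\mathscr{P}(t)$ close to $P_{0}$. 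Lemma \ref{lem:Coppel} propagates this dichotomy to $[t_{0},\infty)$ for every $t_{0}\in\mathbb{R}$.

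To identify $\mathcal{R}(\mathscr{P}(t_{0}))$ with $T_{p(t_{0})}W^{s}(p)$, I would proceed in two steps. By Proposition \ref{lem:1}(i), $\mathcal{R}(\mathscr{P}(t_{0}))$ equals the set of initial data whose variational solution is bounded on $[t_{0},\infty)$. Given $v\in T_{p(t_{0})}W^{s}(p)$, pick a $C^{1}$ curve $q(s)$ in $W^{s}(p)$ with $q(0)=p(t_{0})$ and $q'(0)=v$, and let $\varphi_{t}$ denote the flow of $x'=f(x)$. Differentiating the flow identity $\varphi_{t-t_{0}}(q(s))\in W^{s}(p)$ at $s=0$ produces a variational solution $\xi(t):=\partial_{s}\varphi_{t-t_{0}}(q(s))|_{s=0}$ with $\xi(t_{0})=v$; the standard invariant-manifold estimates guarantee that $\varphi_{t-t_{0}}(q(s))\to p$ uniformly for $s$ near $0$, so $\xi(t)$ is bounded on $[t_{0},\infty)$ and thus $T_{p(t_{0})}W^{s}(p)\subseteq\mathcal{R}(\mathscr{P}(t_{0}))$. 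Equality then follows by a dimension count: the closeness of $\mathscr{P}(t)$ to $P_{0}$ supplied by Proposition \ref{thm:roughness} forces $\mathrm{rank}\,\mathscr{P}(t_{0})=\mathrm{rank}\,P_{0}=\dim W^{s}(p)$. The unstable case is entirely symmetric after reversing time: one works on $(-\infty,t_{0}]$ and uses the kernel characterization in Proposition \ref{lem:1}(ii).

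The main subtlety I expect is the uniform invariant-manifold estimate guaranteeing that variational vectors tangent to $W^{s}(p)$ remain bounded along the forward orbit; this is routine but not wholly trivial. The remaining ingredients — roughness of dichotomies, propagation via Lemma \ref{lem:Coppel}, and the dimension count for the reverse inclusion — are direct applications of the machinery already assembled in this section.
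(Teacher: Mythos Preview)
Your approach is correct and matches the paper's own argument: the paper does not give a separate proof of this proposition but derives it in the paragraph immediately preceding the statement, using exactly the roughness theorem (Proposition~\ref{thm:roughness}) together with Lemma~\ref{lem:Coppel} to obtain the dichotomy, and stating the identification $\mathcal{R}(\mathscr{P}(t_0))=T_{p(t_0)}W^s(p)$ without further justification. Your write-up actually supplies more detail than the paper on this identification, via the variational-curve argument and the dimension count; the subtlety you flag (that tangent vectors to $W^s(p)$ yield forward-bounded variational solutions) is the one nontrivial point, and it is indeed handled by the exponential contraction in the local stable manifold theorem once $p(t)$ has entered a small neighbourhood of $p$.
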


\pagebreak  Now we can apply to (\ref{*3}) the result below, which
relates the existence of bounded solutions for a linear equation
and for its adjoint.

\begin{thm}{\rm \cite[Lemma 4.2]{Palmer84}}
\label{thm:Fredholm} Let $A(t)$ be a bounded and continuous matrix
defined on $(-\infty,\infty)$. The linear equation
$x^{\prime}=A(t)x$ has exponential dichotomy on $[t_0,\infty)$ and
on $(-\infty,t_0]$ if and only if the linear operator
$$L: x(t)
\in C^1_b(\mathbb{R},\mathbb{R}^n) \mapsto x^{\prime}(t)-A(t)x(t)
\in C^0_b(\mathbb{R},\mathbb{R}^n)$$ is Fredholm. The index of $L$
is $\dim E^s_{t_0} + \dim E^u_{t_0}-n$. Moreover,
$b\in\mathcal{R}(L)$ if and only if
\begin{equation*}
\int_{-\infty}^{\infty} <w(t), b(t)> \, dt =0
\end{equation*}
for all bounded solutions $w(t)$ of the adjoint equation $w^{\prime}=-A(t)^*w $.
\end{thm}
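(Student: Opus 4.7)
The plan is to prove the two implications separately. The forward direction (dichotomies imply Fredholm, with index formula and adjoint characterization of the range) is essentially constructive; the converse, which extracts exponential decay estimates from mere Fredholmness of $L$, is the substantive analytic step and will be the main obstacle.

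For the forward direction I would first identify the kernel. An element $x\in\ker L$ is a bounded solution of $x'=A(t)x$ on all of $\mathbb{R}$, so by Proposition~\ref{lem:1} its initial value $x(t_0)$ must lie in both $E^s_{t_0}$ and $E^u_{t_0}$. Hence $\ker L$ is finite dimensional, of dimension $\dim(E^s_{t_0}\cap E^u_{t_0})$. To study the range I would construct, for each $b\in C^0_b(\mathbb{R},\mathbb{R}^n)$, the Green-function solutions
\begin{equation*}
x_+(t)=\int_{t_0}^{t}X(t)\mathscr{P}(s)X^{-1}(s)b(s)\,ds-\int_{t}^{\infty}X(t)(I-\mathscr{P}(s))X^{-1}(s)b(s)\,ds
\end{equation*}
on $[t_0,\infty)$, and the analogous $x_-$ on $(-\infty,t_0]$ using the projection supplied by the dichotomy there. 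The estimates of Definition~\ref{def:dicotomia} make these integrals absolutely convergent with bounded $C^1$ output, and each half-solution satisfies $x'=A(t)x+b$ on its interval. The pair glues into a global element of $C^1_b$ precisely when the mismatch $x_+(t_0)-x_-(t_0)$ lies in $E^s_{t_0}+E^u_{t_0}$, so $\operatorname{codim}\mathcal{R}(L)=n-\dim(E^s_{t_0}+E^u_{t_0})$. Using the identity $\dim(E^s_{t_0}\cap E^u_{t_0})+\dim(E^s_{t_0}+E^u_{t_0})=\dim E^s_{t_0}+\dim E^u_{t_0}$, the index equals $\dim E^s_{t_0}+\dim E^u_{t_0}-n$, and $\mathcal{R}(L)$ is closed.

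For the orthogonality characterization I would exploit the fact that $w'=-A(t)^{*}w$ inherits exponential dichotomies on both half-lines with projections $I-\mathscr{P}(t)^{*}$, so its space of globally bounded solutions has dimension $n-\dim(E^s_{t_0}+E^u_{t_0})$, matching $\operatorname{codim}\mathcal{R}(L)$. Necessity of the integral condition is an integration by parts: for $x\in C^1_b$ and $w$ a bounded adjoint solution,
\begin{equation*}
\int_{-T}^{T}\langle w(t),x'(t)-A(t)x(t)\rangle\,dt=\bigl[\langle w(t),x(t)\rangle\bigr]_{-T}^{T}\longrightarrow 0\quad\text{as }T\to\infty,
\end{equation*}
the boundary term vanishing because $x$ is bounded while $w$ decays exponentially on the relevant complementary subspaces at $\pm\infty$. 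A dimension count then shows that these orthogonality conditions cut out $\mathcal{R}(L)$ exactly.

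The converse direction---recovering exponential dichotomies on both half-lines from Fredholmness of $L$---is where I expect the main obstacle. Following Palmer's strategy, I would apply the open mapping theorem to $L$ restricted to a closed complement of $\ker L$ to obtain uniform $L^\infty$-bounds on a right inverse $G:\mathcal{R}(L)\to C^1_b$, and then probe $G$ with bump-function or truncated-exponential inhomogeneities supported on shifted unit intervals $[\tau,\tau+1]$ in order to read off the exponential decay rates of $X(t)X^{-1}(s)$ along appropriate invariant subspaces. The dichotomy projections would then be defined intrinsically, for instance as the projection onto those initial conditions whose forward orbit coincides with $G$ applied to compactly supported data. Verifying that the resulting estimates hold with uniform constants and that the subspaces so defined give a genuine projection $\mathscr{P}(t)$ satisfying Definition~\ref{def:dicotomia} is the delicate point.
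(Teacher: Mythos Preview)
The paper does not supply its own proof of this statement: it is quoted verbatim as \cite[Lemma 4.2]{Palmer84} and used as a black box throughout \S\ref{sec:dico}. So there is no proof in the paper to compare against; you are effectively reconstructing Palmer's argument.

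Your sketch of the forward direction is essentially the standard one and is correct in outline. Two small points. First, a notational slip: in your Green-function formula you write $X(t)\mathscr{P}(s)X^{-1}(s)$, but with the paper's convention $\mathscr{P}(s)=X(s)PX^{-1}(s)$ this is not what you want; the correct kernel is $X(t)PX^{-1}(s)=X(t)X^{-1}(s)\mathscr{P}(s)$. Second, to conclude that $\operatorname{codim}\mathcal{R}(L)=n-\dim(E^s_{t_0}+E^u_{t_0})$ you need to know that the map $b\mapsto x_-(t_0)-x_+(t_0)$ hits every coset of $E^s_{t_0}+E^u_{t_0}$; this is easy (test with $b$ supported near $t_0$) but should be said. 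Your integration-by-parts argument for necessity of the orthogonality condition is fine: bounded adjoint solutions do decay exponentially at both ends, by Proposition~\ref{lem:adjunta} and Proposition~\ref{lem:1}.

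For the converse you correctly flag the real difficulty. Your description of Palmer's strategy---extracting uniform decay from a bounded right inverse by probing with localized inhomogeneities---is accurate at the level of a plan, but as written it is not a proof: the passage from ``$G$ has bounded operator norm'' to ``$\|X(t)PX^{-1}(s)\|\le Ke^{-\alpha(t-s)}$ with a specific projection $P$'' requires a careful construction of the projection and a nontrivial bootstrapping argument that you have only gestured at. If you intend this as a complete proof rather than a summary, that step needs to be filled in.
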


To explore the existence of bounded solutions of the adjoint equation one has to study its properties of exponential dichotomy.

\subsection{Exponential dichotomy for the adjoint equation}
\label{subsec:expon}

Let $X(t)$ be a fundamental matrix of the equation $x^{\prime }=A(t)x$. It is well known that the conjugate transpose of its inverse $X^{-1}(t)^{\ast}$ is a fundamental matrix of the adjoint equation $w^{\prime }=-A(t)^{\ast }w$. From this relationship between the fundamental matrices of both equations we can conclude the following result about the connection between their respective dichotomies.

\begin{prop}
\label{lem:adjunta} If the linear equation $x^{\prime}=A(t)x$ has exponential dichotomy on $J$ with projection matrix  $\mathscr{P}(t)$ then the adjoint equation $w^{\prime}=-A(t)^*w$ has exponential dichotomy on $J$ with projection matrix $I-\mathscr{P}(t)^*$. Moreover, for each $t_0 \in J$
\begin{align*}
\mathbb{R}^n&=\mathcal{R}(\mathscr{P}(t_0)) \, \bot \, \mathcal{R}(I-%
\mathscr{P}(t_0)^*) = \mathcal{R}(\mathscr{P}(t_0)) \, \bot \ \mathcal{N}(%
\mathscr{P}(t_0)^*), \\
\mathbb{R}^n &=\mathcal{R}(I-\mathscr{P}(t_0)) \, \bot \ \mathcal{R}(%
\mathscr{P}(t_0)^*)=\mathcal{N}(\mathscr{P}(t_0)) \, \bot \ \mathcal{R}(%
\mathscr{P}(t_0)^*).
\end{align*}
\end{prop}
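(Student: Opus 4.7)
The plan is to construct the projection for the adjoint equation directly from the one for the original equation via conjugate transposition, and then verify both the exponential bounds and the orthogonality statements by elementary linear algebra.

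First I would observe that if $X(t)$ is a fundamental matrix of $x'=A(t)x$, then differentiating $X(t)X^{-1}(t)=I$ yields $(X^{-1})'(t)=-X^{-1}(t)A(t)$, and taking the conjugate transpose gives $\frac{d}{dt}[X^{-1}(t)^*]=-A(t)^*X^{-1}(t)^*$. Hence $Y(t):=X^{-1}(t)^*$ is a fundamental matrix of $w'=-A(t)^*w$, with $Y^{-1}(t)=X(t)^*$. Next I would propose $Q:=I-P^*$ as the candidate constant projection for the adjoint in Definition~\ref{def:dicotomia}; note $Q^2=Q$ since $(P^*)^2=(P^2)^*=P^*$.

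The verification of the dichotomy estimates then reduces to a transposition plus a swap of $s$ and $t$. By hypothesis we have $\|X(t)PX^{-1}(s)\|\leq Ke^{-\alpha(t-s)}$ for $t\geq s$ and $\|X(t)(I-P)X^{-1}(s)\|\leq Le^{-\beta(s-t)}$ for $s\geq t$. Since $\|M\|=\|M^*\|$ in the operator norm induced by the Euclidean inner product, transposing and relabeling $s\leftrightarrow t$ turns the first inequality into
\[
\|X^{-1}(t)^*P^*X(s)^*\|\leq Ke^{-\alpha(s-t)}\quad\text{for }s\geq t,
\]
and the second into $\|X^{-1}(t)^*(I-P)^*X(s)^*\|\leq Le^{-\beta(t-s)}$ for $t\geq s$. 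Writing $Y(t)QY^{-1}(s)=X^{-1}(t)^*(I-P^*)X(s)^*$ and $Y(t)(I-Q)Y^{-1}(s)=X^{-1}(t)^*P^*X(s)^*$, these are precisely the bounds required in Definition~\ref{def:dicotomia} for the adjoint (with the roles of decaying and growing rates interchanged, as expected). The projection matrix function of the adjoint is therefore
\[
\mathscr{Q}(t)=Y(t)QY^{-1}(t)=X^{-1}(t)^*(I-P^*)X(t)^*=\bigl(X(t)(I-P)X^{-1}(t)\bigr)^*=I-\mathscr{P}(t)^*.
\]

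For the orthogonality statements I would argue abstractly for any projection $\Pi$: if $u=\Pi y\in\mathcal{R}(\Pi)$ and $v\in\mathcal{N}(\Pi^*)=\mathcal{R}(I-\Pi^*)$, then $\langle u,v\rangle=\langle\Pi y,v\rangle=\langle y,\Pi^*v\rangle=0$, so $\mathcal{R}(\Pi)\perp\mathcal{N}(\Pi^*)$. Since $\dim\mathcal{R}(\Pi)+\dim\mathcal{N}(\Pi^*)=\mathrm{rank}(\Pi)+(n-\mathrm{rank}(\Pi))=n$, this orthogonality yields the direct sum decomposition $\mathbb{R}^n=\mathcal{R}(\Pi)\perp\mathcal{N}(\Pi^*)$. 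Applying this with $\Pi=\mathscr{P}(t_0)$ gives the first displayed decomposition, and with $\Pi=I-\mathscr{P}(t_0)$ gives the second (noting $\mathcal{N}(I-\mathscr{P}(t_0))=\mathcal{R}(\mathscr{P}(t_0))$ is not needed; what is needed is $\mathcal{N}((I-\mathscr{P}(t_0))^*)=\mathcal{R}(\mathscr{P}(t_0)^*)$, which follows by applying the same lemma to the projection $I-\mathscr{P}(t_0)^*$).

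There is essentially no conceptual obstacle here: the content of the proposition is that conjugate transposition intertwines the dichotomy of a linear equation with that of its adjoint, and the only point requiring care is correctly matching growth/decay directions after the $s\leftrightarrow t$ swap, which is what produces the appearance of $I-P^*$ rather than $P^*$ as the projection.
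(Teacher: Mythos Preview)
Your proof is correct and follows exactly the route the paper indicates: the paper does not spell out a proof of this proposition but simply remarks that it follows from the relationship $Y(t)=X^{-1}(t)^*$ between the fundamental matrices of the equation and its adjoint, which is precisely the starting point of your argument. Your explicit verification of the dichotomy bounds via transposition and the $s\leftrightarrow t$ swap, together with the elementary projection-algebra argument for the orthogonal decompositions, fills in the details the paper leaves to the reader.
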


As done in Definition~\ref{def:Es} we can define now the stable and unstable subspaces for adjoint equations.

\begin{defi}
Suppose that $J=[\tau,\infty)$ (resp. $J=(-\infty,\tau]$) is contained in the interval of definition of $x^{\prime}=A(t)x$. For each $t_0\in J$ the stable (resp. unstable) subspace for initial time $t=t_0$ of the adjoint equation $x^{\prime}=-A(t)^*x$ is defined as
\begin{align*}
E^{s*}_{t_0} &=\{w\in\mathbb{R}^n: \|X^{-1}(t)^*X(t_0)^*w\| \to 0 \ \mathrm{when} \ t\to \infty \}
\\
(\text{resp.\ } E^{u*}_{t_0}&=\{w\in\mathbb{R}^n: \|X^{-1}(t)^*X(t_0)^*w\|\to 0 \ \mathrm{when} \ t\to -\infty \}).
\end{align*}
\end{defi}

The following result about the relationship  between the invariant
subspaces of the equation $x^{\prime}=A(t)x$ and its adjoint
follows as a straight consequence of Proposition~\ref{lem:1} and
Proposition~\ref{lem:adjunta}. \pagebreak

\begin{prop}
\label{not:2} Suppose that the equation $x^{\prime}=A(t)x$ with $x\in\mathbb{R}^n$ and $t\in J$ has exponential dichotomy in $J$.
\begin{enumerate}
\item If $J=[t_0,\infty)$ then
\begin{align*}
E^s_{t_0}&=\mathcal{R}(\mathscr{P}(t_0))= \{x\in\mathbb{R}^n: \ \sup_{t\geq t_0
}\|X(t)X^{-1}(t_0)x\| < \infty \}, \\
E^{s*}_{t_0}&=\mathcal{N}(\mathscr{P}(t_0)^*)= \{w\in\mathbb{R}^n: \ \sup_{t\geq t_0
}\|X^{-1}(t)^*X(t_0)^* w\| < \infty \},
\end{align*}
and $\mathbb{R}^n=E^s_{t_0} \, \bot \, E^{s*}_{t_0}$. \\

\item If $J=(-\infty,t_0]$ then
\begin{align*}
E^u_{t_0}&=\mathcal{N}(\mathscr{P}(t_0))= \{x\in\mathbb{R}^n: \ \sup_{t\leq t_0
}\|X(t)X^{-1}(t_0)x\| < \infty \}, \\
E^{u*}_{t_0}&=\mathcal{R}(\mathscr{P}(t_0)^*)= \{w\in\mathbb{R}^n: \ \sup_{t\leq t_0
}\|X^{-1}(t)^*X(t_0)^* w\| < \infty \},
\end{align*}
and $\mathbb{R}^n=E^u_{t_0} \, \bot \, E^{u*}_{t_0}$.
\end{enumerate}
\end{prop}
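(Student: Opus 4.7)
The plan is to obtain both parts as a direct composition of Proposition~\ref{lem:1} (applied twice) with Proposition~\ref{lem:adjunta}. The guiding observation is that if $X(t)$ is a fundamental matrix of $x'=A(t)x$, then $Y(t)=X^{-1}(t)^{*}$ is a fundamental matrix of the adjoint $w'=-A(t)^{*}w$, and the associated transition matrix is $Y(t)Y(t_0)^{-1}=X^{-1}(t)^{*}X(t_0)^{*}$. Hence every assertion of Proposition~\ref{lem:1} applies verbatim to the adjoint system once $X(t)X^{-1}(t_0)$ is replaced by $X^{-1}(t)^{*}X(t_0)^{*}$, and, by Proposition~\ref{lem:adjunta}, the dichotomy projection of the adjoint is $I-\mathscr{P}(t)^{*}$.

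For part~(1), with $J=[t_0,\infty)$, the identity $E^{s}_{t_0}=\mathcal{R}(\mathscr{P}(t_0))$ together with its boundedness characterization is exactly Proposition~\ref{lem:1}(i) applied to $x'=A(t)x$. Applying the same item to the adjoint equation, whose dichotomy has projection $I-\mathscr{P}(t)^{*}$, yields $E^{s*}_{t_0}=\mathcal{R}(I-\mathscr{P}(t_0)^{*})=\mathcal{N}(\mathscr{P}(t_0)^{*})$, with its description as the set of $w$ satisfying $\sup_{t\geq t_0}\|X^{-1}(t)^{*}X(t_0)^{*}w\|<\infty$. The orthogonal decomposition $\mathbb{R}^n=E^{s}_{t_0}\perp E^{s*}_{t_0}$ is then just the first identity $\mathbb{R}^n=\mathcal{R}(\mathscr{P}(t_0))\perp\mathcal{N}(\mathscr{P}(t_0)^{*})$ of Proposition~\ref{lem:adjunta}. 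Part~(2) is entirely analogous: one invokes Proposition~\ref{lem:1}(ii) for the original equation to get $E^{u}_{t_0}=\mathcal{N}(\mathscr{P}(t_0))$ with its supremum characterization, and once more for the adjoint to obtain $E^{u*}_{t_0}=\mathcal{N}(I-\mathscr{P}(t_0)^{*})=\mathcal{R}(\mathscr{P}(t_0)^{*})$; the orthogonality $\mathbb{R}^n=\mathcal{N}(\mathscr{P}(t_0))\perp\mathcal{R}(\mathscr{P}(t_0)^{*})$ is the second identity of Proposition~\ref{lem:adjunta}.

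No step presents a genuine obstacle, since both ingredients are already at hand; the only point that requires care is bookkeeping with the adjoint fundamental matrix, so that the boundedness characterization of the invariant subspaces is correctly written in terms of $X^{-1}(t)^{*}X(t_0)^{*}$ rather than the flow of the original equation, and that the elementary identities $\mathcal{R}(I-Q)=\mathcal{N}(Q)$ and $\mathcal{N}(I-Q)=\mathcal{R}(Q)$ are used for the projection $Q=\mathscr{P}(t_0)^{*}$.
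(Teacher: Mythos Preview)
Your proposal is correct and follows essentially the same approach as the paper, which simply states that the result is a straight consequence of Proposition~\ref{lem:1} and Proposition~\ref{lem:adjunta}. Your bookkeeping with the adjoint fundamental matrix and the identities $\mathcal{R}(I-Q)=\mathcal{N}(Q)$, $\mathcal{N}(I-Q)=\mathcal{R}(Q)$ for the projection $Q=\mathscr{P}(t_0)^{*}$ is exactly what is needed to unpack that remark.
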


In short, if the linear equation $x^{\prime}=A(t)x$ has exponential dichotomy in $J=[t_0, \infty)$ (resp. $(-\infty, t_0]$)
then the forward (resp. backward) bounded solutions of this equation and its adjoint are those which tend to zero exponentially when  $t\to \infty$ (resp. $t\to -\infty$). On the other hand, from the decompositions of $\mathbb{R}^n$ given in Proposition~\ref{not:2} it follows that, if $x^{\prime}=A(t)x$ has $m$ linearly independent forward (resp. backward) bounded solutions, then the adjoint equation $w^{\prime}=-A(t)^*w$ has $n-m$ linearly independent forward (resp. backward) bounded solutions.

\begin{prop}
\label{lem:formula} If the linear equation $x^{\prime}=A(t)x$ has exponential dichotomy in $[t_0,\infty)$ and in $(-\infty,t_0]$ then the number of linearly independent bounded solutions of the adjoint equation $w^{\prime}=-A(t)^*w$ is
\begin{equation*}
\dim E^{s*}_{t_0} \cap E^{u*}_{t_0} =n-\dim E^s_{t_0} -\dim E^u_{t_0} + \dim E^s_{t_0} \cap E^u_{t_0}.
\end{equation*}
\end{prop}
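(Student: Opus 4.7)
The plan is to reduce the statement to a pure linear-algebra dimension count by combining the orthogonal decompositions of $\mathbb{R}^n$ supplied by Proposition~\ref{not:2} with Grassmann's formula. The only non-algebraic ingredient is the observation that, for a linear equation with exponential dichotomies on both half-lines meeting at $t_0$, the space of globally bounded solutions of the adjoint equation is parametrized (via evaluation at $t_0$) exactly by $E^{s*}_{t_0}\cap E^{u*}_{t_0}$.

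First I would establish this parametrization. A solution $w(t)$ of $w'=-A(t)^*w$ is bounded on $\mathbb{R}$ if and only if it is bounded on $[t_0,\infty)$ and on $(-\infty,t_0]$ separately. By the characterization of $E^{s*}_{t_0}$ and $E^{u*}_{t_0}$ given in Proposition~\ref{not:2}, boundedness of $w$ on $[t_0,\infty)$ (resp.\ $(-\infty,t_0]$) is equivalent to $w(t_0)\in E^{s*}_{t_0}$ (resp.\ $w(t_0)\in E^{u*}_{t_0}$). Hence the map $w\mapsto w(t_0)$ is a linear isomorphism between the space of bounded solutions of the adjoint and $E^{s*}_{t_0}\cap E^{u*}_{t_0}$, so the number of linearly independent bounded solutions equals $\dim(E^{s*}_{t_0}\cap E^{u*}_{t_0})$.

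Next I would exploit the orthogonal decompositions $\mathbb{R}^n=E^s_{t_0}\perp E^{s*}_{t_0}$ and $\mathbb{R}^n=E^u_{t_0}\perp E^{u*}_{t_0}$ from Proposition~\ref{not:2} to identify $E^{s*}_{t_0}=(E^s_{t_0})^\perp$ and $E^{u*}_{t_0}=(E^u_{t_0})^\perp$. Therefore
\begin{equation*}
E^{s*}_{t_0}\cap E^{u*}_{t_0}\;=\;(E^s_{t_0})^\perp\cap(E^u_{t_0})^\perp\;=\;(E^s_{t_0}+E^u_{t_0})^\perp,
\end{equation*}
and consequently
\begin{equation*}
\dim\bigl(E^{s*}_{t_0}\cap E^{u*}_{t_0}\bigr)\;=\;n-\dim\bigl(E^s_{t_0}+E^u_{t_0}\bigr).
\end{equation*}

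Finally, Grassmann's dimension formula gives $\dim(E^s_{t_0}+E^u_{t_0})=\dim E^s_{t_0}+\dim E^u_{t_0}-\dim(E^s_{t_0}\cap E^u_{t_0})$, which substituted above yields the desired identity. There is no real obstacle here; the proof is a short bookkeeping exercise, and the only subtle point worth stating carefully is the equivalence between global boundedness and simultaneous membership in the two adjoint-initial subspaces, which is what makes the count work.
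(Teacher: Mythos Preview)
Your proof is correct and follows exactly the route the paper intends: the paper does not spell out a formal proof of this proposition but, in the paragraph immediately preceding it, points to the orthogonal decompositions of Proposition~\ref{not:2} as the source of the dimension count, which is precisely what you use together with Grassmann's formula. Your additional remark identifying $E^{s*}_{t_0}\cap E^{u*}_{t_0}=(E^s_{t_0}+E^u_{t_0})^\perp$ makes the argument cleaner than the paper's sketch.
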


Now we apply the above result to determine the number of bounded solutions of the adjoint equation $z^{\prime}=-Df(p(t))^{\ast}z$. As we have already noticed, the number of linearly independent forward (resp. backward) bounded solutions of the variational equation $x^{\prime }=Df(p(t))x$ is given by the dimension of the stable (resp. unstable) subspace of the equation $x^{\prime }=Df(p_{+})x$ (resp. $x^{\prime}=Df(p_{-})x$). That is, such number coincides with the dimension of $W^{s}(p_{+})$ (resp. $W^{u}(p_{-})$). Therefore, taking into account that $E_{t_{0}}^{s}=T_{p(t_{0})}W^{s}(p_{+})$ and $E_{t_{0}}^{u}=T_{p(t_{0})}W^{u}(p_{-})$, we can conclude, from Proposition~\ref{lem:formula}, the following result.

\begin{prop}
\label{not:3} If $p(t)$ is a (homo)heteroclinic solution connecting two equilibrium points $p_{+}$ and $p_{-}$ then the number of linearly independent bounded solutions of the adjoint variational equation $w^{\prime}=-Df(p(t))^{\ast}w$ is the codimension of $T_{p(t_{0})}W^{s}(p_{+})+T_{p(t_{0})}W^{u}(p_{-})$, that is,
\begin{equation*}
n-\dim W^{s}(p_{+})-\dim W^{u}(p_{-})+\dim T_{p(t_{0})}W^{s}(p_{+})\cap T_{p(t_{0})}W^{u}(p_{-}).
\end{equation*}
\end{prop}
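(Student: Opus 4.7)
The plan is to obtain Proposition \ref{not:3} by chaining together three earlier ingredients from the section: the geometric identification of the exponential-dichotomy subspaces with tangent spaces to invariant manifolds (Proposition \ref{lem:conexion}), the dimension formula for bounded solutions of the adjoint equation (Proposition \ref{lem:formula}), and the elementary identity $\dim(A+B)=\dim A+\dim B-\dim(A\cap B)$ for subspaces of $\mathbb{R}^n$.

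First I would note that, because $p(t)\to p_+$ as $t\to\infty$ and $p(t)\to p_-$ as $t\to-\infty$, the forward trajectory parametrizes an orbit contained in $W^s(p_+)$ and the backward trajectory parametrizes an orbit contained in $W^u(p_-)$. Applying Proposition \ref{lem:conexion} to each half-line separately, the variational equation $x'=Df(p(t))x$ admits an exponential dichotomy on $[t_0,\infty)$ and on $(-\infty,t_0]$, with projections $\mathscr{P}_+(t_0)$, $\mathscr{P}_-(t_0)$ satisfying
\begin{equation*}
E^s_{t_0}=\mathcal{R}(\mathscr{P}_+(t_0))=T_{p(t_0)}W^s(p_+),
\qquad
E^u_{t_0}=\mathcal{N}(\mathscr{P}_-(t_0))=T_{p(t_0)}W^u(p_-).
\end{equation*}

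Second, since both dichotomies are in force, Proposition \ref{lem:formula} applies verbatim to $x'=Df(p(t))x$, and the number of linearly independent bounded solutions of the adjoint variational equation $w'=-Df(p(t))^{\ast}w$ equals
\begin{equation*}
n-\dim E^s_{t_0}-\dim E^u_{t_0}+\dim\bigl(E^s_{t_0}\cap E^u_{t_0}\bigr).
\end{equation*}
Substituting the identifications from the previous step yields the explicit expression stated in the proposition, namely
\begin{equation*}
n-\dim W^s(p_+)-\dim W^u(p_-)+\dim\bigl(T_{p(t_0)}W^s(p_+)\cap T_{p(t_0)}W^u(p_-)\bigr).
\end{equation*}

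Finally, I would observe that the Grassmann dimension identity $\dim(A+B)=\dim A+\dim B-\dim(A\cap B)$, applied to $A=T_{p(t_0)}W^s(p_+)$ and $B=T_{p(t_0)}W^u(p_-)$, rewrites the above quantity as $n-\dim\bigl(T_{p(t_0)}W^s(p_+)+T_{p(t_0)}W^u(p_-)\bigr)$, which is precisely the codimension in $\mathbb{R}^n$ of the sum of the two tangent spaces. There is essentially no obstacle to overcome: the proof is a direct concatenation of the previously established results. The only point requiring attention is verifying the hypothesis of Proposition \ref{lem:formula}, namely that the variational equation possesses exponential dichotomies on both $[t_0,\infty)$ and $(-\infty,t_0]$; and this is exactly what Proposition \ref{lem:conexion} supplies from the forward and backward asymptotics of $p(t)$.
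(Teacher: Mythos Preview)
Your proposal is correct and follows essentially the same route as the paper: identify $E^s_{t_0}$ and $E^u_{t_0}$ with the tangent spaces to the invariant manifolds via Proposition~\ref{lem:conexion}, then plug these into the dimension formula of Proposition~\ref{lem:formula}. Your added remark about the Grassmann identity making the codimension interpretation explicit is a welcome clarification that the paper leaves implicit.
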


\begin{defi}
\label{def:nondegeneratehomoclinicorbit}
A (homo)heteroclinic orbit $\gamma $ is said non degenerate if
$$\dim T_{p}W^{s}(p_{+})\cap T_{p}W^{u}(p_{-})=1,$$
with $p\in\gamma$. Otherwise $\gamma$ is said degenerate.
\end{defi}

\begin{rem}
\label{rem:d} If the (homo)heteroclinic orbit is non degenerate,
the number of linearly independent bounded solutions is obtained
directly from the stability indexes of $p_{+}$ and $p_{-}$.
Moreover, although $\dim T_{p(t_{0})}W^{s}(p_{+})=\dim
W^{s}(p_{+})$ and $\dim T_{p(t_{0})}W^{u}(p_{-})=\dim
W^{u}(p_{-})$, in general $\dim T_{p(t_{0})}W^{s}(p_{+})\cap
T_{p(t_{0})}W^{u}(p_{-})$ does not coincide with $\dim
W^{s}(p_{+})\cap W^{u}(p_{-})$.
\end{rem}

In the sequel the (homo)heteroclinic orbit $\gamma=\{p(t)\,:\,t\in\mathbb{R}\}$ will be non degenerate.

\subsection{Bifurcation equation}
As already mentioned, the existence of  (homo)heteroclinic orbits
for (\ref{*1}) implies the existence of bounded solutions of
(\ref{*2}) and, consequently, the existence of bounded solutions
of (\ref{*3}) when $b(t)\in C_{b}^{0}(\mathbb{R},\mathbb{R}^{n})$.
According to Proposition~\ref{thm:Fredholm}, if the adjoint
variational equation $w^{\prime}=-Df(p(t))^{\ast}w$ has $d$
linearly independent bounded solutions $w_{i}$, then the
persistence of the (homo)heteroclinic orbit requires the
fulfillment of the $d$ conditions
$$
\int_{-\infty }^{\infty }\left\langle w_{i}(t),b(t)\right\rangle\,
dt=0 \quad \text{for $i=1,\dots,d$}.
$$
The question now is the
sufficiency of such conditions.

When $d=1$ the sufficiency could be followed from
\cite{ChowHaleMalletParet80}. In general, for $d\geq 1$, the
techniques to be used follow the first steps of the Lin's method
\cite{Lin, Sandstede}. For $\|\lambda \|$ small enough, one has to
look for solutions $p_{\lambda }^{+}(\cdot )$ and $p_{\lambda
}^{-}(\cdot )$ of~(\ref{*1}), contained in the stable and unstable
invariant manifolds of the equilibrium points $p_{+}(\lambda )$
and $p_{-}(\lambda )$, respectively (see Figure~\ref{fig:B}).
Initial values $p_{\lambda }^{\pm }(t_0)$ will belong to a section
$\Sigma _{t_{0}}$ transverse to the (homo)heteroclinic orbit
$\gamma$. Namely
$$
\Sigma_{t_{0}}=p(t_0) + \{f(p(t_0))\}^{\bot}=p(t_0) + \left(W_{t_0}^+ \oplus W_{t_0}^- \oplus E_{t_{0}}^{\ast }\right)
$$
where $E_{t_{0}}^{\ast }=E_{t_{0}}^{s\ast }\cap E_{t_{0}}^{u\ast
}$  and $W_{t_0}^+$ (resp. $W_{t_0}^-$) is the orthogonal
complement of
$$
\text{$E_{t_0}^s \cap E_{t_0}^u=\mathrm{span}\{f(p(t_0))\}$ in
$E_{t_0}^s$ (resp. $E_{t_0}^u$).}
$$
Moreover the condition $\xi
^{\infty }(\lambda )=p_{\lambda}^{-}(t_{0})-p_{\lambda
}^{+}(t_{0})\in E_{t_{0}}^{\ast }$ will be required. Under these
assumptions there will exist two unique solutions $p_{\lambda
}^{\pm }(\cdot )$ for each $\lambda$. The jump
$$
\xi ^{\infty
}(\lambda )=p_{\lambda}^{-}(t_{0})-p_{\lambda }^{+}(t_{0})
$$
measures the displacement between the stable and unstable
invariant manifolds on the section $\Sigma _{t_{0}}$ in the
direction of the subspace $E_{t_{0}}^{\ast
}=[E_{t_{0}}^{s}+E_{t_{0}}^{u}]^{\bot}$.

The proof of the result below can be found in~\cite[Lemma
3.3]{Sandstede} and~\cite[Lemma 2.1.2]{Knobloch}. Namely, in
\cite{Knobloch} only the first item is proved and, moreover, the
proof is developed for the degenerate case although the non
degenerate one follows in a similar manner. The second item is
proved in \cite{Sandstede} for the non degenerate case. We include
in Appendix~\ref{apendixC} a complete and simplified proof of this
result. \pagebreak

\begin{figure}
\begin{center}
\resizebox{\textwidth}{!}{\input{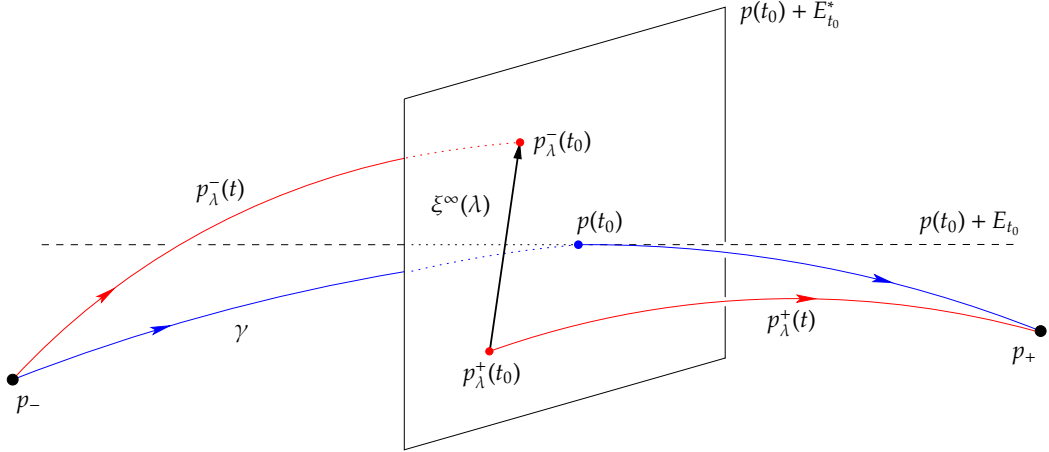}}
\caption{Non-degenerate heteroclinic orbit in $\bb^3$ where the
1-dimensional manifolds coincide. In this case,
$E^s_{t_0}=E^u_{t_0}=E_{t_0}$ (unidimensional),
$E^{s*}_{t_0}=E^{u*}_{t_0}=E^*_{t_0}$ (bidimensional) and
$\Sigma_{t_0}=p(t_0)+E^*_{t_0}$. For simplicity, we have assume
that the perturbation satisfies $g(\lambda,p_\pm)=0$ for all
$\lambda$.} \label{fig:B}
\end{center}
\end{figure}

\begin{lem}
\label{lem:eq-bif}
There exists $\delta>0$ such that for all $\lambda\in\mathbb{R}^k$, with $\|\lambda\|<\delta$,
\begin{enumerate}
\item There exists a unique pair of solutions $p_{\lambda }^{+}(t)$ and $p_{\lambda}^{-}(t)$ of~(\ref{*1}) parametrizing orbits on
$W^{s}(p_{+}(\lambda ))$ and $W^{u}(p_{-}(\lambda ))$, respectively, such that $p_{\lambda }^{\pm }(t_{0})\in \Sigma _{t_{0}}$
and
$$
\xi ^{\infty }(\lambda )=p_{\lambda }^{-}(t_{0})-p_{\lambda }^{+}(t_{0})\in
E_{t_{0}}^{\ast }.
$$
Writing the solutions as $p_{\lambda }^{\pm}(t)=p(t)+z_{\lambda }^{\pm }(t)$, then $z_{\lambda }^{\pm }(\cdot )$
are, respectively, forward and backward bounded solutions of the equation~(\ref{*2}). They depend regularly on $\lambda $ and the functions
$z_{0}^{\pm }$ are identically zero.

\item For $\varepsilon >0$ small enough, there exists a (homo)heteroclinic
solution $p_{\lambda }(t)$ such that
$$
\text{$\|p_{\lambda}(t_{0})-p(t_{0})\|<\varepsilon$ if and only if
$\xi ^{\infty}(\lambda )=0$,}
$$
that is, the components $\xi _{i}^{\infty }(\lambda )$ of
$\xi^{\infty }(\lambda )$ in a basis $\{w_{i}:i=1\dots d\}$ of
$E_{t_{0}}^{\ast }$ satisfy
\begin{equation*}
\qquad \xi _{i}^{\infty }(\lambda )  \equiv
\int_{-\infty}^{t_{0}}<w_{i}(s),b(\lambda,s,z_{\lambda
}^{-}(s))>\,ds  +\int_{t_{0}}^{\infty
}<w_{i}(s),b(\lambda,s,z_{\lambda }^{+}(s))>\,ds  =  0
\end{equation*}
being $w_i(s)=X^{-1}(s)^*X(t_0)^*w_i$ for $i=1,\dots,d$ bounded
linearly independent solutions of the adjoint variational
equation.
\end{enumerate}
\end{lem}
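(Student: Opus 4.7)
The plan is to prove both items by Lin's method, building on the exponential dichotomies that Proposition~\ref{lem:conexion} provides for the variational equation $z'=Df(p(t))z$ on $[t_0,\infty)$ and on $(-\infty,t_0]$, with respective projections $\mathscr{P}_+(\cdot)$ and $\mathscr{P}_-(\cdot)$. For item (1), I would parametrize forward-bounded solutions of $z'=Df(p(t))z+b(\lambda,t,z)$ via the Green's function associated to $\mathscr{P}_+$: seek $z^+_\lambda\in C^0_b([t_0,\infty),\mathbb{R}^n)$ as a fixed point of
\begin{align*}
T_+[z](t) = X(t)X^{-1}(t_0)\eta^+ & + \int_{t_0}^{t} X(t)X^{-1}(s)\mathscr{P}_+(s)\,b(\lambda,s,z(s))\,ds \\
& - \int_{t}^{\infty} X(t)X^{-1}(s)(I-\mathscr{P}_+(s))\,b(\lambda,s,z(s))\,ds,
\end{align*}
with $\eta^+\in E^s_{t_0}$ a free parameter. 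Using the dichotomy bounds together with $b(0,\cdot,0)=D_zb(0,\cdot,0)=0$, the operator $T_+$ is a contraction on a small ball of $C^0_b$ for $(\lambda,\eta^+)$ small, yielding $z^+_\lambda=z(\cdot;\lambda,\eta^+)$ smooth in its parameters. An analogous construction on $(-\infty,t_0]$ gives $z^-_\lambda=z(\cdot;\lambda,\eta^-)$ with $\eta^-\in E^u_{t_0}$. The remaining freedom is fixed by the side conditions $z^\pm_\lambda(t_0)\in\Sigma_{t_0}-p(t_0)$ and $\xi^\infty(\lambda)\in E^*_{t_0}$: in the decomposition $\mathbb{R}^n=\mathbb{R}\,f(p(t_0))\oplus W^+_{t_0}\oplus W^-_{t_0}\oplus E^*_{t_0}$ these amount to killing the $\mathbb{R}\,f(p(t_0))$-components of $z^\pm_\lambda(t_0)$ and the $W^\pm_{t_0}$-components of $z^-_\lambda(t_0)-z^+_\lambda(t_0)$. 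At $\lambda=0$ the reference $\eta^\pm=0$ works (giving $z^\pm_0\equiv 0$), and the linearization of these constraints in $(\eta^+,\eta^-)$ is invertible because the projections involved act as the identity on the corresponding summands; the implicit function theorem then supplies unique smooth $\eta^\pm=\eta^\pm(\lambda)$.

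For item (2), the Melnikov-type integral formula follows from the standard adjoint identity. For each bounded solution $w_i$ of the adjoint equation $w'=-Df(p(t))^*w$ — Proposition~\ref{not:3} provides $d$ linearly independent ones whose values at $t_0$ form a basis of $E^*_{t_0}$ — one computes
\[
\frac{d}{dt}\langle w_i(t),z^\pm_\lambda(t)\rangle=\langle w_i(t),\,b(\lambda,t,z^\pm_\lambda(t))\rangle.
\]
Since $w_i(t)\to 0$ as $|t|\to\infty$ and $z^\pm_\lambda$ converge exponentially at their respective infinities by the dichotomy, the boundary terms vanish. Integrating on the two half-lines and subtracting gives
\[
\langle w_i,\,z^-_\lambda(t_0)-z^+_\lambda(t_0)\rangle=\int_{-\infty}^{t_0}\langle w_i(s),b(\lambda,s,z^-_\lambda(s))\rangle\,ds+\int_{t_0}^{\infty}\langle w_i(s),b(\lambda,s,z^+_\lambda(s))\rangle\,ds,
\]
and identifying the left-hand side with the $i$-th component of $\xi^\infty(\lambda)\in E^*_{t_0}$ in the corresponding dual basis yields the claimed expression. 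Finally, $\xi^\infty(\lambda)=0$ means $z^+_\lambda(t_0)=z^-_\lambda(t_0)$, so uniqueness of ODE solutions glues the two half-orbits into a (homo)heteroclinic orbit close to $\gamma$; conversely, any nearby (homo)heteroclinic orbit of (\ref{*1}) admits, by transversality of $\gamma$ to $\Sigma_{t_0}$, a unique small time reparametrization placing its intersection with $\Sigma_{t_0}$ at $t=t_0$, and the uniqueness in (1) forces it to coincide with $(z^+_\lambda,z^-_\lambda)$, so $\xi^\infty(\lambda)=0$.

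The hard part will be the implicit-function step in item (1): one must verify both that $T_\pm$ is a well-defined contraction on $C^0_b$ with smooth dependence on $(\lambda,\eta^\pm)$, balancing the dichotomy decay rates against the quadratic vanishing of $b$, and that the linearization of the section and jump constraints in $(\eta^+,\eta^-)$ at the reference solution is invertible. This bookkeeping, carried out in the decomposition $\mathbb{R}^n=\mathbb{R}\,f(p(t_0))\oplus W^+_{t_0}\oplus W^-_{t_0}\oplus E^*_{t_0}$ and using the orthogonality relations of Proposition~\ref{not:2}, is the technical heart of Lin's method and of the lemma.
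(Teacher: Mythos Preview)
Your proposal is correct and follows the same architecture as the paper's proof (Lin's method via exponential dichotomies): for item~(1) the paper also writes the integral representation of forward/backward bounded solutions using the dichotomy projections $\mathscr{P}_\pm$, obtains $z^\pm(\eta^\pm,\lambda)$ by the implicit function theorem (your contraction-mapping formulation is equivalent), and then solves the jump/section constraints for $\eta^\pm(\lambda)$ by a second application of IFT. One organizational difference: the paper takes $\eta^\pm\in W^\pm_{t_0}$ from the outset (so the section condition is built in), whereas you take $\eta^+\in E^s_{t_0}$ and impose the flow-direction constraint separately; both work, but the paper's choice trims one layer of bookkeeping.

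The genuine difference is in item~(2). The paper derives the integral formula by evaluating $z^\pm_\lambda(t_0)$ from the fixed-point equations, pairing with $w_i$, and using the projection identities $\mathscr{P}_-(t_0)^*w_i=w_i$ and $(I-\mathscr{P}_+(t_0))^*w_i=w_i$ that follow from $w_i\in E^{s*}_{t_0}\cap E^{u*}_{t_0}$. Your route---differentiating $\langle w_i(t),z^\pm_\lambda(t)\rangle$, using that $w_i$ solves the adjoint equation, and integrating over the half-lines---is more elementary and avoids the projection algebra entirely; it only needs the exponential decay of $w_i(t)$ at both infinities (which holds since $w_i\in E^*_{t_0}$) and the boundedness of $z^\pm_\lambda$. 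Both arguments yield the same formula; yours is arguably cleaner, while the paper's makes more explicit how the formula arises from the Green's-function structure already set up in item~(1).
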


According with the above statement the  persistence of
(homo)heteroclinic orbits follows from the analysis of the
bifurcation equation $\xi^{\infty}(\lambda)=0$. The existence of
non zero parameter values $\lambda \in \mathbb{R}^{k}$ such that
$\xi ^{\infty}(\lambda )=0$ follows from the Implicit Function
Theorem when $D_{\lambda }\xi ^{\infty }(0)$ has rank $d<k$. Thus,
the following result follows: \pagebreak

\begin{thm}
\label{thm:bif} Let $\xi ^{\infty }(\lambda )=0$, with $\lambda \in \mathbb{R}^{k}$, be the bifurcation equation of the differential equation~(\ref{*1}). If $k>d$ and $\mathrm{rank}\,D_{\lambda }\xi ^{\infty }(0)=d$, then~(\ref{*1}) has a (homo)heteroclinic orbit for each parameter value $\lambda $ on a regular manifold of dimension $k-d$ with tangent subspace at $\lambda =0$ given by the solutions of the system
$$
\sum_{j=1}^{k}\xi _{ij}^{\infty }\lambda _{j}=0\qquad i=1,\dots ,d
$$
where
$$
\xi _{ij}^{\infty }\equiv \frac{\partial\xi _{i}^{\infty
}}{\partial\lambda _{j}}(0)=\int_{-\infty }^{\infty }<w_{i}(s),D_{\lambda _{j}}g(0,p(s))>\,ds
$$
for $i=1,\dots ,d$ and $j=1,\dots ,k$.
\end{thm}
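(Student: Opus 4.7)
The plan is to reduce everything to the Implicit Function Theorem applied to the map $\xi^\infty\colon\mathbb{R}^k\to\mathbb{R}^d$ furnished by Lemma~\ref{lem:eq-bif}(2). Since that lemma already characterizes persistence of the non-degenerate (homo)heteroclinic orbit by the vanishing of $\xi^\infty(\lambda)$ and guarantees the smooth dependence $\lambda\mapsto z_\lambda^\pm$ with $z_0^\pm\equiv 0$, once we know that $D_\lambda\xi^\infty(0)$ has rank exactly $d<k$ the set $\{\lambda\colon\xi^\infty(\lambda)=0\}$ is, in a neighborhood of the origin, a $C^r$ submanifold of codimension $d$ (i.e. dimension $k-d$), whose tangent space at $0$ is precisely $\ker D_\lambda\xi^\infty(0)$, which is the linear system displayed in the statement.

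The substantive step is therefore the Melnikov-type formula for $\xi_{ij}^\infty=\partial\xi_i^\infty/\partial\lambda_j(0)$. The plan is to differentiate under the integral sign in the expression
\[
\xi_i^\infty(\lambda)=\int_{-\infty}^{t_0}\langle w_i(s),b(\lambda,s,z_\lambda^-(s))\rangle\,ds+\int_{t_0}^{\infty}\langle w_i(s),b(\lambda,s,z_\lambda^+(s))\rangle\,ds,
\]
using the chain rule with respect to $\lambda_j$. Recall from the definition of $b$ that
\[
b(\lambda,s,z)=f(p(s)+z)-f(p(s))-Df(p(s))z+g(\lambda,p(s)+z),
\]
so $D_\lambda b(\lambda,s,z)=D_\lambda g(\lambda,p(s)+z)$ and $D_z b(0,s,0)=Df(p(s))-Df(p(s))=0$. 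Evaluating at $\lambda=0$, where $z_0^\pm\equiv 0$, the contribution of the term $D_z b(0,s,0)\cdot\partial_{\lambda_j}z_0^\pm(s)$ vanishes and only the direct $\lambda$-derivative survives, giving
\[
\xi_{ij}^\infty=\int_{-\infty}^{t_0}\langle w_i(s),D_{\lambda_j}g(0,p(s))\rangle\,ds+\int_{t_0}^{\infty}\langle w_i(s),D_{\lambda_j}g(0,p(s))\rangle\,ds,
\]
which is the announced formula.

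To legitimize the interchange of derivative and integral I would invoke dominated convergence: the $w_i(s)$ are bounded solutions of $w'=-Df(p(s))^\ast w$ lying in $E_{t_0}^{s\ast}\cap E_{t_0}^{u\ast}$, so by Proposition~\ref{not:2} they decay exponentially as $s\to\pm\infty$ with rates dictated by the exponential dichotomies of $x'=Df(p_\pm)x$; the integrands for $\lambda$ in a small ball are uniformly dominated by an integrable function because $D_\lambda g(\lambda,p(s)+z_\lambda^\pm(s))$ is bounded uniformly in $s$ (the orbit $p(s)$ is bounded and $z_\lambda^\pm$ is bounded in $s$ and smooth in $\lambda$), while the $L^2$-exponential decay of $w_i(s)$ supplies the majorant. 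This is the one nontrivial point; once it is in place the Implicit Function Theorem closes the argument, since an operator of maximal rank $d$ from $\mathbb{R}^k$ to $\mathbb{R}^d$ has, by standard rank theorem, a level set $\{\xi^\infty=0\}$ that is a regular $(k-d)$-dimensional submanifold near $0$ with tangent space $\ker D_\lambda\xi^\infty(0)$, producing a (homo)heteroclinic orbit for every $\lambda$ on this manifold via Lemma~\ref{lem:eq-bif}(2).
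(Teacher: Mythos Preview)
Your proposal is correct and follows exactly the route the paper intends: the paper does not give a separate proof of Theorem~\ref{thm:bif} but presents it as an immediate consequence of Lemma~\ref{lem:eq-bif} together with the Implicit Function Theorem (see the sentence preceding the statement). Your derivation of the Melnikov formula by differentiating the integral expression for $\xi_i^\infty(\lambda)$, using $D_z b(0,t,0)=0$ (already noted in the paper) and $z_0^\pm\equiv 0$, is precisely the computation the paper leaves implicit; the dominated-convergence justification via the exponential decay of the $w_i$ is the standard way to make this rigorous.
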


\begin{rem}
Note that, when $k\leq d$, $\lambda =0$ is the unique value of $\lambda \in\mathbb{R}^{k}$ for which there exists a (homo)heteroclinic orbit $\gamma _{\lambda }=\{p_{\lambda }(t):\ p_{\lambda }^{\prime}(t)=f(p_{\lambda }(t))+g(\lambda,p_{\lambda }(t))\ t\in \mathbb{R}\}$ such that $\sup_{t\in \mathbb{R}}\|p_{\lambda }(t)-p(t)\|$ is small enough. If $k>d$ the homoclinic connection persists for parameter values on a manifold of codimension $d$ where
$$
d=n-\dim W^{s}(p_{+})-\dim W^{u}(p_{-})+1.
$$
In such a case we say that there is (homo)heteroclinic bifurcation of a non degenerate orbit at $\lambda=0$ which is of codimension $d$.
\end{rem}

\section{Nilpotent singularities of codimension $n$ on $\mathbb{R}^n$}\label{section:nilpotent-n}
\subsection{Generic unfoldings}
Let $X$ be a $C^\infty$ vector field in $\mathbb{R}^n$ with $X(0)=0$ and 1-jet at the origin linearly conjugated to $ \sum_{k=1}^{n-1} x_{k+1} \partial/\partial x_k$. Introducing appropriate $C^\infty$ coordinates, $X$ can be written as:
\begin{equation} \label{eq:4}
 \sum_{k=1}^{n-1} x_{k+1} \frac{\partial}{\partial x_k}+f(x_1,\dots, x_n) \frac{\partial}{\partial x_n},
\end{equation}
with $f(x)=O(\|x\|^2)$ where $x=(x_1,\dots,x_n)$. It is said that $X$ has a nilpotent singularity of codimension $n$ at $0$ if the generic condition $\partial^2 f / \partial x_1^2 (0) \not = 0$ is fulfilled. The vector field $X$ itself will be often referred to as a nilpotent singularity of codimension $n$.

Nilpotent singularities of codimension $n$ are generic in families depending on at least $n$ parameters and according with~\cite[Lemma 2.1]{Drubi} we can state the following result:
\begin{lem} \label{lem:Drubi}
Any $n$-parametric generic unfolding of a nilpotent singularity of codimension $n$ in $\mathbb{R}^n$ can be written as
\begin{equation} \label{eq:5}
 \sum_{k=1}^{n-1} x_{k+1} \frac{\partial}{\partial x_k}+\left(\mu_1 + \sum_{k=2}^{n} \mu_k x_k + x^2_1 + h(x,\mu) \right) \frac{\partial}{\partial x_n},
\end{equation}
where $\mu=(\mu_1,\dots,\mu_n)\in\mathbb{R}^n$, $h(0,\mu)=0$, $(\partial h/\partial x_i)(0,\mu)=0$ for $i=1,\ldots,n$, $(\partial^2 h/\partial x_1^2)(0,\mu)=0$, $h(x,\mu)=O(\|(x,\mu)\|^2)$ and $h(x,\mu)=O(\|(x_2,\dots,x_n)\|)$.
\end{lem}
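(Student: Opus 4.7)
The plan is to bring an arbitrary smooth $n$-parametric unfolding $X_\mu$ of $X$ into the stated normal form in three stages: chain reduction, internal normalization of the last component, and reparametrization of $\mu$.

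First, I write a general unfolding as
$X_\mu=\sum_{k=1}^{n-1}(x_{k+1}+\alpha_k(x,\mu))\,\partial_{x_k}+(f(x)+\beta(x,\mu))\,\partial_{x_n}$
with $\alpha_k,\beta$ vanishing at $\mu=0$. A finite sequence of near-identity parameter-dependent coordinate changes, chosen recursively so that the new $y_{k+1}$ absorbs the perturbation $\alpha_k$ of the $k$-th component, restores the chain structure $\dot y_k=y_{k+1}$ for $k=1,\dots,n-1$; by the chain rule all the discarded perturbations collect into a single smooth function $F(y,\mu)$ sitting in the last component. This reduces $X_\mu$ to $\sum_{k=1}^{n-1} y_{k+1}\,\partial_{y_k}+F(y,\mu)\,\partial_{y_n}$.

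Second, I work inside this restricted class. Prolonged diffeomorphisms of the form $y_1\mapsto\psi(y_1,\mu)$, with the remaining $y_k$ recomputed by successive differentiation along the flow, preserve the chain structure and act on $F$ by a lower-triangular rule on jets. Together with a global rescaling $y_k\mapsto\lambda^k y_k$, $t\mapsto\tau t$ (with $\lambda,\tau$ smooth functions of $\mu$), I use them first to normalize the coefficient of $y_1^2$ in $F$ to $1$, which is possible because $\partial^2 f/\partial y_1^2(0)\neq 0$ persists for small $\mu$, and then to kill inductively the pure $y_1^k$ terms of $F$ of order $k\geq 3$. A translation $y_1\mapsto y_1-a_1(\mu)/2$, propagated through the chain, finally removes the $a_1(\mu)\,y_1$ term in $F$.

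Third, $F$ now reads $F(y,\mu)=a_0(\mu)+\sum_{k=2}^{n}a_k(\mu)\,y_k+y_1^2+\tilde h(y,\mu)$, where $\tilde h$ satisfies all the listed vanishing properties: $\tilde h(0,\mu)=0$ and $(\partial\tilde h/\partial y_i)(0,\mu)=0$ because the constant and linear jets have been extracted; $(\partial^2\tilde h/\partial y_1^2)(0,\mu)=0$ and $\tilde h=O(\|(y_2,\dots,y_n)\|)$ because the pure $y_1^k$ terms have been eliminated in the previous stage. The genericity of the unfolding is precisely the condition that $\mu\mapsto(a_0(\mu),a_2(\mu),\dots,a_n(\mu))$ has nonsingular differential at $\mu=0$, so the Inverse Function Theorem furnishes a reparametrization $\mu\mapsto(\mu_1,\dots,\mu_n)$ with $a_0(\mu)=\mu_1$ and $a_k(\mu)=\mu_k$ for $k=2,\dots,n$. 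Setting $h=\tilde h$ yields the form in~(\ref{eq:5}).

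The main obstacle is the internal normalization of stage two: verifying that prolongation of $\psi(y_1,\mu)$ together with the rescaling acts on the jet of $F$ by a triangular transformation that kills the unwanted pure-$y_1^k$ monomials order by order, and that the successive changes remain smooth in $\mu$ and do not reintroduce terms already eliminated. Once the prolongation formulas are written out and the leading coefficient $\partial^2 F/\partial y_1^2$ is shown to be invertible for $\mu$ small, the induction is a straightforward but careful bookkeeping; the remaining chain reduction and Inverse Function Theorem steps are routine.
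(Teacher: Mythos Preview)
The paper does not prove this lemma; it merely quotes it from \cite[Lemma~2.1]{Drubi}. So there is no argument in the present paper to compare against, and the question is simply whether your sketch is correct.

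Stages one and three are fine, but stage two contains a genuine gap. You claim that prolonged changes $y_1\mapsto\psi(y_1,\mu)$ together with the global rescaling $y_k\mapsto\lambda^k y_k$, $t\mapsto\tau t$ act triangularly on the $y_1$-jet of $F$ and allow you to kill the pure $y_1^k$ terms for $k\geq 3$. They do not. Restricting to $y_2=\dots=y_n=0$, a prolonged change sends $g(y_1)=F(y_1,0,\dots,0)$ to $\tilde g=(\psi'\cdot g)\circ\psi^{-1}$, and the equation $\psi'(y_1)\,g(y_1)=\psi(y_1)^2$ at order~$3$ reads $c_3+2b_2=2b_2$, i.e.\ $c_3=0$, where $g=y_1^2+c_3y_1^3+\cdots$ and $\psi=y_1+b_2y_1^2+\cdots$. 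Thus the coefficient $c_3$ is invariant under your transformation group, and the rescaling only multiplies it by a nonzero constant. In particular the condition $h=O(\|(x_2,\dots,x_n)\|)$ cannot be reached by smooth conjugacy alone.

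What is missing is orbital ($C^\infty$-) equivalence: one must also allow multiplication of the vector field by a positive function $\rho(y_1,\mu)$. After restoring the chain form this sends $g$ to $\rho^{\,n}g$ (up to the prolongation), and the combined freedom $\tilde g=\psi'\rho^{\,n}g$ is now enough. Using Malgrange preparation one writes $g(y_1,\mu)=u(y_1,\mu)\,(y_1^2+b_1(\mu)y_1+b_0(\mu))$ with $u(0,0)\neq0$; taking $\rho=u^{-1/n}$ reduces $g$ to the quadratic factor, a translation removes $b_1$, and your stage three then applies. This is the standard device in the Bogdanov--Takens and Dumortier--Ib\'a\~nez normal-form reductions, and it is what the cited reference uses.
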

\begin{rem}
Besides the condition $\partial^2 f / \partial x_1^2 (0) \not = 0$ in (\ref{eq:4}), genericity assumptions in Lemma \ref{lem:Drubi} include a transversality condition involving derivatives of the family with respect to parameters.
\end{rem}
The classical techniques of reduction to normal forms could be used to remove terms in the Taylor expansion of $h$ but we do not need to work with simpler expressions. To obtain the results provided in the next sections we will have to impose
\begin{equation}
\label{generic-condition}
\kappa=\frac{\partial^2 h}{\partial x_1 \partial x_2}(0,0)\neq 0,
\end{equation}
as an additional generic assumption.

\subsection{Rescalings and limit families}
\label{rescalingandlimitfamilies}
Generalizing the techniques used in \cite{Dumortier} for dimension three, we rescale varia\-bles and parameters by means of
\begin{eqnarray}
\mu_1 &=& \varepsilon^{2n}\nu_1, \nonumber\\
\mu_k &=& \varepsilon^{n-k+1}\nu_k \quad  \text{for} \ k=2,\dots, n, \label{generalrescaling}\\
 x_k  &=& \varepsilon^{n+k-1}y_k \quad  \text{for} \ k=1,\dots, n, \nonumber
\end{eqnarray}
with $\varepsilon>0$ and $\nu_1^2 + \ldots +\nu^2_n=1$, and also multiply the whole family  by a factor $1/\varepsilon$. In new coordinates and parameters~\eqref{eq:5} can be written as
\begin{equation} \label{eq:familia-nilp}
\sum_{k=1}^{n-1} y_{k+1}\frac{\partial}{\partial y_k}+\big(\nu_1 + \sum_{k=2}^{n} \nu_k y_k + y^2_1 + \varepsilon \kappa y_1 y_2+ O(\varepsilon^2)\big) \frac{\partial}{\partial y_n},
\end{equation}
with $\kappa$ as introduced in (\ref{generic-condition}) and where $y=(y_1, \dots y_n)$ belongs to an arbitrarily big compact in $\mathbb R^n$.

The first step to understand the dynamics arising in generic unfoldings of $n$-dimensional nilpotent singularities of codimension $n$ is the study of the bifurcation diagram of the \emph{limit family}
\begin{equation} \label{eq:familia-limit}
 \sum_{k=1}^{n-1} y_{k+1} \frac{\partial}{\partial y_k}+\big(\nu_1 + \sum_{k=2}^{n} \nu_k y_k + y^2_1 \big) \frac{\partial}{\partial y_n},
\end{equation}
obtained by taking $\varepsilon=0$ in~\eqref{eq:familia-nilp}. Structurally stable behaviours and generic bifurcations in \eqref{eq:familia-limit} should persist in \eqref{eq:familia-nilp} for $\varepsilon>0$ small enough.

If $\nu_1 >0$  then \eqref{eq:familia-limit} has no equilibrium points. Moreover the function
\begin{equation*}
   L(y_1,\dots,y_n)=y_n-\nu_2y_1-\nu_3 y_2 - \ldots - \nu_n y_{n-1}
\end{equation*}
is strictly increasing along the orbits and therefore the maximal compact inva\-riant set is empty. Hence we only need to pay attention to the case $\nu_1 \leq 0$.

On the other hand, up to a change of sign, family~\eqref{eq:familia-limit} is invariant under the transformation
\begin{equation} \label{map}
    \begin{array}{lllll}
    \lefteqn{(\nu, y) \mapsto
    \left(\nu_1,(-1)^{n-1}\nu_2,(-1)^{n-2}\nu_3,\ldots,\nu_{n-1},-\nu_n,\right.}
    \\
    \\
    &&&&\left.(-1)^n y_1,(-1)^{n-1}y_2,(-1)^{n-2}y_3,\ldots,y_{n-1},-y_n\right),
    \end{array}
\end{equation}
with $\nu=(\nu_1,\ldots,\nu_n)$. As a first consequence, the study of bifurcations can be reduced to the region
\begin{equation*}
 \mathcal{R}=\{(\nu_1,\dots,\nu_n) \in \mathbb{S}^{n-1}: \ \nu_1 \leq 0, \ \nu_n\leq 0\}.
\end{equation*}

Moreover, since the limit family is invariant under \eqref{map} up to a change of sign, for parameter values on the set
\begin{equation*}
\mathcal{T}=\{ (\nu_1,\dots,\nu_n)\in \mathbb{S}^{n-1}: \nu_{n-2i}=0 \ \text{with} \ i=0,\ldots,\lfloor (n-2)/2\rfloor  \},
\end{equation*}
where $\lfloor \cdot \rfloor$ denotes the floor function, the correspondent vector fields in the limit family~\eqref{eq:familia-limit} are
time-reversible with respect to the involution
\begin{equation*}
R: (y_1,y_2,y_3,\dots,y_n) \mapsto ( (-1)^n y_1,(-1)^{n-1}y_2,\dots,y_{n-1},-y_n).
\end{equation*}
We said that the manifold $\mathcal{T}$ of  dimension $\lfloor n/2 \rfloor -1$ is the \emph{reversibility set} of the $n$-dimensional nilpotent limit family.

Note that the divergence of the limit family~\eqref{eq:familia-limit} takes the constant value $\nu_n$. Therefore the condition $\nu_n=0$ characterizes a subfamily of volume-preserving vector fields. Assuming that $n$ is even and defining $m=n/2$, for parameter values on the reversibility set the limit family~\eqref{eq:familia-limit} can be written as
\begin{equation} \label{eq:4_anexoII}
 \sum_{k=1}^{n-1} y_{k+1} \frac{\partial}{\partial y_k}+\big(\nu_1 + \sum_{k=1}^{m-1} \nu_{2k+1} y_{2k+1} + y^2_1 \big) \frac{\partial}{\partial y_n}.
\end{equation}
In Appendix~\ref{apendixA} we will prove the following result.

\begin{thm} \label{thm:hamiltonian}
Introducing the new variables $q=S \cdot (y_1,y_3,\ldots,y_{n-1})^t$ and $p=(y_2,y_4,\ldots,y_n)^t$, with
\begin{equation*}
S=\begin{pmatrix}
 -\nu_3     & -\nu_5 & \dots & -\nu_{n-1} & 1 \\
 -\nu_5     &        &\iddots &  \iddots & 0   \\
  \vdots    & \iddots &\iddots & \iddots   & \vdots  \\
-\nu_{n-1}  & \iddots &\iddots &            & \vdots  \\
   1        &   0    & \dots &     \dots  & 0
\end{pmatrix},
\end{equation*}
the family~(\ref{eq:4_anexoII}) transforms into
\begin{equation*}
\frac{\partial H}{\partial p} \, \frac{\partial}{\partial q}-\frac{\partial H}{\partial q} \, \frac{\partial}{\partial p},  \\
\end{equation*}
where
$$H(q,p)=\frac{1}{2}<Sp,p>+V(q).$$
The potential $V$ is defined as
\begin{align*}
V(q)&=
     -\frac{1}{3} q_m^3 - \frac{1}{2} \sum_{k=1}^{m-1}  \nu_{2k+1} b_{k+1} q_m^2 -  \frac{1}{2} \sum_{j=1}^{\lfloor m/2 \rfloor} b_{m-2j+1} q_{m-j}^2 \\
    &- \sum_{k=1}^{m-1} \sum_{i=m-k}^{m-1} \nu_{2k+1} b_{i-m+k+1} q_i q_m - \sum_{j=1}^{\lfloor m/2 \rfloor} \sum_{i=j}^{m-j-1} b_{i} q_i q_{m-j} - \nu_1 q_m,
\end{align*}
where, given $b_1=1$,
\begin{equation*}
b_i=\sum_{\ell=1}^{i-1}\nu_{2(m-i+\ell)+1}b_\ell \quad  \text{for
$ i=2,\ldots,m$.}
\end{equation*}
\end{thm}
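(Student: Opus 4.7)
My plan is to verify the Hamiltonian structure by direct computation in the new coordinates, once I have inverted the linear change of variables. I first analyze the matrix $S$. Reading off its entries, $S$ is symmetric, with $S_{i,j}=-\nu_{2(i+j-1)+1}$ for $i+j\le m$, $S_{i,j}=1$ on the anti-diagonal $i+j=m+1$, and $S_{i,j}=0$ for $i+j>m+1$. I claim that the recurrence $b_{i}=\sum_{\ell=1}^{i-1}\nu_{2(m-i+\ell)+1}b_\ell$ with $b_1=1$ is exactly the identity forced by $S\,S^{-1}=I$: a short block computation shows $(S^{-1})_{k,j}=b_{j-m+k}$ when $j\ge m-k+1$ and $0$ otherwise. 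In particular, $y_{2k-1}=(S^{-1}q)_k=\sum_{i=1}^{k}b_i\,q_{m-k+i}$, so that $y_1=q_m$.

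Next I rewrite the system~\eqref{eq:4_anexoII} in $(q,p)$ coordinates. Because $\dot y_{2k-1}=y_{2k}$ for $k=1,\dots,m$, we immediately obtain $\dot q=S\dot y_{\mathrm{odd}}=Sp$. Since $S$ is symmetric, this coincides with $\partial H/\partial p$ for $H=\tfrac12\langle Sp,p\rangle+V(q)$, regardless of $V$. The non-trivial content of the theorem is therefore the identification of $V$ such that $\dot p=-\partial V/\partial q$. For the momentum equations, $\dot p_i=y_{2i+1}$ for $i=1,\dots,m-1$ and $\dot p_m=\nu_1+\sum_{k=1}^{m-1}\nu_{2k+1}y_{2k+1}+y_1^2$. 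Substituting the formulas for $y_{2k-1}$ in terms of $q$ produces polynomial expressions in $q$.

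I then differentiate the stated $V$ term by term and match. The component $-\partial V/\partial q_m$ collects the cubic $-\tfrac13 q_m^3$ (yielding $q_m^2=y_1^2$), the linear $-\nu_1 q_m$ (yielding $\nu_1$), the quadratic $-\tfrac12\sum_k\nu_{2k+1}b_{k+1}q_m^2$, and the bilinear $-\sum_{k}\sum_{i=m-k}^{m-1}\nu_{2k+1}b_{i-m+k+1}q_i q_m$; these reassemble into $\nu_1+\sum_k\nu_{2k+1}y_{2k+1}+y_1^2$ via the expansion $y_{2k+1}=\sum_j b_j\,q_{m-k+j}$. For $\ell=m-j<m$, the contributions to $\partial V/\partial q_\ell$ come only from the pure square $-\tfrac12 b_{m-2j+1}q_{m-j}^2$ and from the bilinear sum $-\sum_{j'}\sum_i b_i q_i q_{m-j'}$ (splitting the two cases $\ell=i$ and $\ell=m-j'$). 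A change of summation index turns the defining recurrence $b_i=\sum_\ell \nu_{2(m-i+\ell)+1}b_\ell$ into precisely the coefficient identity needed to show that these terms reassemble into $y_{2\ell+1}$.

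The main obstacle is the bookkeeping in the last paragraph: controlling the index ranges $1\le j\le\lfloor m/2\rfloor$, $j\le i\le m-j-1$, and showing that each cross term produced by the $b$-recursion when expressing $y_{2\ell+1}$ in terms of $q$ is accounted for exactly once by some derivative of $V$. I will handle this by organizing the monomials of $V$ according to their degree in $q_m$, so that each equation $\dot p_\ell=-\partial V/\partial q_\ell$ isolates a disjoint block of terms, and by checking the symmetry of coefficients forced by the fact that $V$ is a polynomial (so $\partial^2V/\partial q_i\partial q_j=\partial^2V/\partial q_j\partial q_i$ is automatic, which in turn reflects the symmetry of the matrix $MS^{-1}$ that appears when linearizing the $\dot p$ system).
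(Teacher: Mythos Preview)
Your approach is correct and shares the same core ingredients as the paper's proof---identifying the entries of $S^{-1}$ via the $b$-recurrence, reading off $y_{2k-1}=\sum_{i=1}^{k}b_i\,q_{m-k+i}$, and recognizing that $\dot q=Sp=\partial H/\partial p$ is automatic---but the two proofs diverge in how they handle the potential. You \emph{verify} the given $V$ by differentiating it and matching coefficients with $\dot p_\ell$; the paper instead \emph{constructs} $V$ by successive integration, starting from $-\partial V/\partial q_m=\dot p_m$ to obtain $V$ up to an unknown function $\varphi_{m-1}(q_1,\dots,q_{m-1})$, then using $-\partial V/\partial q_{m-1}=\dot p_{m-1}$ together with the recurrence~\eqref{eq:recurrencia} to determine $\varphi_{m-1}$ up to $\varphi_{m-2}$, and so on down to $\varphi_{m-\lfloor m/2\rfloor-1}$, which turns out constant. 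Your route is more direct in principle since $V$ is already given in the statement, but the paper's sequential integration organizes the index bookkeeping you flag as the main obstacle more cleanly: at each step exactly one new block of terms appears, and the recurrence for the $b_i$ cancels the contributions coming from earlier steps, so one never has to track which bilinear $q_iq_{m-j}$ terms in the full $V$ contribute to a given $\partial V/\partial q_\ell$. Your proposed organizing principle (grouping monomials by degree in $q_m$) does not quite do this job, since for $\ell<m$ all the relevant monomials have degree zero in $q_m$; the paper's descending-index scheme is the natural replacement.
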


\vspace{-0.6cm}
\section{Nilpotent singularity of codimension 4 in $\mathbb{R}^{4}$.}\label{section:nilpotent-4}
We will prove that in any generic unfolding of a nilpotent singularity of codimension four in $\mathbb{R}^4$ there exists a bifurcation hypersurface of homoclinic connections to bifocus equilibria.

Along this section we will take $n=4$ in all the general
expressions introduced in \S\ref{section:nilpotent-n}. It follows
from Lemma \ref{lem:Drubi} that any generic unfolding of the
nilpotent singularity of codimension four in $\mathbb{R}^4$ can be
written as in (\ref{eq:5}). After applying the rescaling
(\ref{generalrescaling}) we get
\begin{equation}
\label{eq:familia-nilp-reescalada-4}
y_{2}\frac{\partial }{\partial y_{1}}+y_{3}\frac{\partial }{\partial y_{2}}%
+y_{4}\frac{\partial }{\partial y_{3}}+\big(\nu _{1}+\nu _{2}y_{2}+\nu
_{3}y_{3}+\nu _{4}y_{4}+y_{1}^{2}+\varepsilon \kappa y_{1}y_{2}+O(\varepsilon ^{2})%
\big)\frac{\partial }{\partial y_{4}},
\end{equation}
with $\nu=(\nu_1,\nu_2,\nu_3,\nu_4)\in \mathbb{S}^3$ and $\varepsilon>0$.

As mentioned in \S\ref{rescalingandlimitfamilies} the first step to understand the dynamics arising in
(\ref{eq:familia-nilp-reescalada-4}) is the study of the limit family
\begin{equation}
\label{eq:familia-limit-4}
y_{2}\frac{\partial }{\partial y_{1}}+y_{3}\frac{\partial }{\partial y_{2}}%
+y_{4}\frac{\partial }{\partial y_{3}}+\big(\nu _{1}+\nu _{2}y_{2}+\nu
_{3}y_{3}+\nu _{4}y_{4}+y_{1}^{2}\big)\frac{\partial }{\partial y_{4}},
\end{equation}
obtained from (\ref{eq:familia-nilp-reescalada-4}) taking
$\varepsilon =0$. As argued in \S\ref{rescalingandlimitfamilies}
one only need to pay attention to parameters in the region $
\mathcal{R}=\{(\nu _{1},\nu _{2},\nu _{3},\nu _{4})\in
\mathbb{S}^{3}:\nu _{1}\leq 0,\,\nu _{4}\leq 0\}$. When
$\nu\in\mathcal{R}$, vector fields in the limit family
(\ref{eq:familia-limit-4}) have equilibrium points $p_{\pm }=(\pm
\sqrt{-\nu_{1}},0,0,0)$ with characteristic equations
\begin{equation}\label{eq:characteristic-4}
r^{4}-\nu _{4}r^{3}-\nu _{3}r^{2}-\nu _{2}r \mp 2 \sqrt{-\nu _{1}}=0.
\end{equation}
Local bifurcations arising in the family were discussed in \cite{DrubiThesi}.
\begin{figure}
\begin{center}
\scalebox{0.7}{
\begin{picture}(0,0)%
\ifpdf\includegraphics{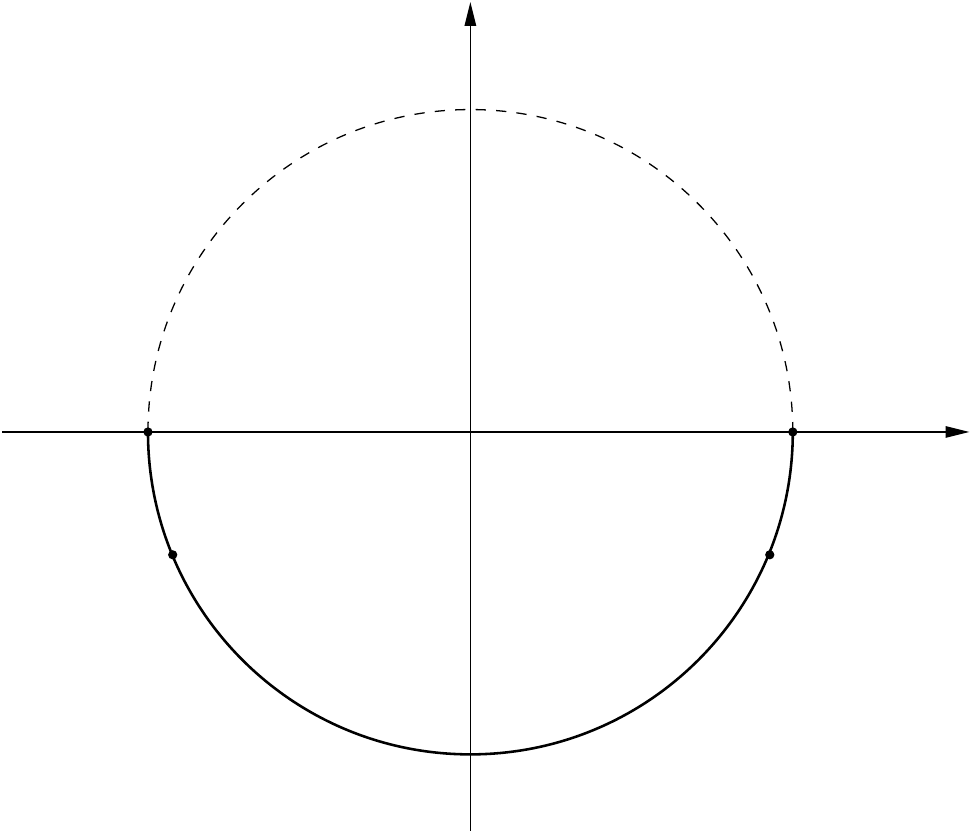}\else\includegraphics{curva_revers.pstex}\fi%
\end{picture}%
\setlength{\unitlength}{3232sp}%
\begin{picture}(5732,4884)(3769,-6103)%
\put(8506,-3661){\makebox(0,0)[lb]{%
\smash{{\SetFigFontNFSS{9}{10.8}{\familydefault}{\mddefault}{\updefault}{\color[rgb]{0,0,0}$\mathrm{BT}$}}}}}%
\put(8371,-4651){\makebox(0,0)[lb]{%
\smash{{\SetFigFontNFSS{9}{10.8}{\familydefault}{\mddefault}{\updefault}{\color[rgb]{0,0,0}$\mathrm{BD}$}}}}}%
\put(4816,-4156){\makebox(0,0)[lb]{%
\smash{{\SetFigFontNFSS{9}{10.8}{\familydefault}{\mddefault}{\updefault}{\color[rgb]{0,0,0}$\mathcal{HH}$}}}}}%
\put(4141,-3661){\makebox(0,0)[lb]{%
\smash{{\SetFigFontNFSS{9}{10.8}{\familydefault}{\mddefault}{\updefault}{\color[rgb]{0,0,0}$\mathrm{HDZ}$}}}}}%
\put(7956,-4156){\makebox(0,0)[lb]{%
\smash{{\SetFigFontNFSS{9}{10.8}{\familydefault}{\mddefault}{\updefault}{\color[rgb]{0,0,0}$\mathcal{SR}$}}}}}%
\put(4411,-4651){\makebox(0,0)[lb]{%
\smash{{\SetFigFontNFSS{9}{10.8}{\familydefault}{\mddefault}{\updefault}{\color[rgb]{0,0,0}$\mathrm{HH}$}}}}}%
\put(9271,-4021){\makebox(0,0)[lb]{%
\smash{{\SetFigFontNFSS{9}{10.8}{\familydefault}{\mddefault}{\updefault}{\color[rgb]{0,0,0}$\nu_3$}}}}}%
\put(5896,-5866){\makebox(0,0)[lb]{%
\smash{{\SetFigFontNFSS{9}{10.8}{\familydefault}{\mddefault}{\updefault}{\color[rgb]{0,0,0}$\mathcal{DF}$}}}}}%
\put(6616,-1411){\makebox(0,0)[lb]{%
\smash{{\SetFigFontNFSS{9}{10.8}{\familydefault}{\mddefault}{\updefault}{\color[rgb]{0,0,0}$\nu_1$}}}}}%
\end{picture}}
\caption{The reversibility curve $\mathcal{T}$ is split into several arcs attending to the type of eigenvalues of the linear part of~\eqref{eq:familia-limit-4} at  $p_-$.} \label{fig:L}
\end{center}
\end{figure}

For parameters on the reversibility curve $\mathcal{T}=\left\{
(\nu _{1},\nu _{2},\nu _{3},\nu _{4})\in \mathbb{S}^{3}:\nu
_{2}=\nu _{4}=0\right\}$ with $\nu _{1}\leq 0$, the characteristic
equations reduces to $r^{4}-\nu _{3}r^2\mp 2\sqrt{-\nu _{1}}=0$.
It follows that the linear part at $p_{+}$ always have a pair of
real eigenvalues and a pair of complex eigenvalues with non-zero
real part. Local behaviour at $p_-$ is richer and it is depicted
in Figure~\ref{fig:L}. Note that we only have to pay attention to
$\nu_1^2+\nu_3^2=1$ with $\nu_1 \leq 0$. It easily follows that
the linear part at $p_-$ has
\begin{itemize}
\item a double zero eigenvalue and eigenvalues $\pm 1$ at $\mathrm{BT}=(0,0,1,0)$,
\item a double zero eigenvalue and a pair of pure imaginary eigenvalues at $\mathrm{HDZ}=(0,0,-1,0)$,
\item two double real eigenvalues $\pm (\nu _{3}/2)^{1/2}$ at $\mathrm{BD}=(\nu_1,0,\nu_3,0)$ with $\nu _{3}^{2}-8\sqrt{-\nu _{1}}=0$ and $\nu_3>0$,
\item two double pure imaginary eigenvalues $\pm i(-\nu_{3}/2)^{1/2}$ at $\mathrm{HH}=(\nu_1,0,\nu_3,0)$ with $\nu _{3}^{2}-8\sqrt{-\nu _{1}}=0$ and $\nu_3<0$,
\item four non-zero real eigenvalues $\pm \lambda_k$, with $k=1,2$ for parameters along the open arc $\mathcal{SR}$ between $\mathrm{BD}$ and $\mathrm{BT}$,
\item four complex eigenvalues with non-zero real part $\rho\pm \omega i$ and $-\rho\pm\omega i$ for parameters along the open arc $\mathcal{DF}$ between $\mathrm{BD}$ and $\mathrm{HH}$,
\item four pure imaginary eigenvalues $\pm\omega_k i$, with $k=1,2$, for parameters along the open arc $\mathcal{HH}$ between $\mathrm{HH}$ and $\mathrm{HDZ}$.
\end{itemize}

From the analysis of the linear part at the equilibrium points it follows that a bifocus is only possible at $p_-$. In order to show the existence of bifocal homoclinic bifurcations in the unfolding of the nilpotent singularity of codimension four in $\mathbb{R}^{4}$ we must study the existence of homoclinic orbits to $p_-$ for parameter values along $\mathcal{T}$.

To study the family (\ref{eq:familia-nilp-reescalada-4}) close to the reversibility curve $\mathcal{T}$ with $\nu_1<0$ it is more convenient to use a directional version of the rescaling (\ref{generalrescaling}) taking $\nu_1=-1$ and $(\nu_2,\nu_3,\nu_4)=(\bar\nu_2,\bar\nu_3,\bar\nu_4)\in\mathbb{R}^3$ to get
\begin{equation}
\label{eq:familia-nilp-reescalada-directional}
y_{2}\frac{\partial }{\partial y_{1}}+y_{3}\frac{\partial }{\partial y_{2}}%
+y_{4}\frac{\partial }{\partial y_{3}}+\big(-1+\bar\nu _{2}y_{2}+\bar\nu
_{3}y_{3}+\bar\nu _{4}y_{4}+y_{1}^{2}+\varepsilon \kappa y_{1}y_{2}+O(\varepsilon ^{2})%
\big)\frac{\partial }{\partial y_{4}}.
\end{equation}
The equilibrium points when $\varepsilon=0$ are given by $q_{\pm}=(\pm 1,0,0,0)$. Note that in fact $q_{\pm}$ are the only equilibrium points even for $\varepsilon>0$ because in (\ref{eq:5}) $h(x,\mu)=O(\|(x_2,\ldots,x_n)\|)$ and this property is preserved by the rescaling. In order to compare with equations already considered in the literature we translate $q_-$ to the origin applying the change of coordinates
$$
x_1=(y_1+1)/2,\qquad x_2=y_2/2^{5/4},\qquad x_3=y_3/2^{6/4},\qquad x_4=y_4/2^{7/4},
$$
to (\ref{eq:familia-nilp-reescalada-directional}) and multiplying by the factor $2^{1/4}$ to obtain
\begin{equation}
\label{eq:familia-nilp-reescalada-directional-translated}
x_{2}\frac{\partial }{\partial x_{1}}+x_{3}\frac{\partial
}{\partial x_{2}} +x_{4}\frac{\partial }{\partial x_{3}}
+\big(-x_{1}+\eta _{2}x_{2}+\eta _{3}x_{3}+\eta
_{4}x_{4}+x_{1}^{2}+\overline{\varepsilon }\kappa
x_{1}x_{2}+O(\bar\varepsilon
^{2})\big)\displaystyle{\frac{\partial }{\partial x_{4}}}
\end{equation}
with $\eta _{2}=2^{-3/4}(\bar\nu_2-\varepsilon\kappa)$, $\eta _{3}=2^{-1/2}\bar\nu_3$, $\eta _{4}=2^{-1/4}\bar\nu_4$ and $\bar\varepsilon
=2^{1/4}\varepsilon$. The equilibrium point $q_{-}$ in ($\ref{eq:familia-nilp-reescalada-directional}$) corresponds to the equilibrium point of ($\ref{eq:familia-nilp-reescalada-directional-translated}$) at the origin. The limit subfamily for $\eta_2=\eta_4=\bar\varepsilon=0$ is now given as
\begin{equation}
x_{2}\frac{\partial }{\partial x_{1}}+x_{3}\frac{\partial }{\partial x_{2}}%
+x_{4}\frac{\partial }{\partial x_{3}}+\big(-x_{1}+\eta _{3}x_{3}+x_{1}^{2}%
\big)\frac{\partial }{\partial x_{4}}.  \label{limitfamilyaftertranslation}
\end{equation}

Writing $u=x_{1}$, ($\ref{limitfamilyaftertranslation}$) is equivalent to the fourth order differential equation
\begin{equation}
u^{(iv)}(t)+Pu^{\prime \prime }(t)+u(t)-u(t)^{2}=0,  \label{4orderequation}
\end{equation}
with $P=-\eta _{3}$. As already mentioned in the introduction, the above equation has been extensively studied in the literature.

In \cite{AmickToland92} authors prove that (\ref{4orderequation})
can be written as a hamiltonian system (as we have stated in
Theorem \ref{thm:hamiltonian} for a more general case) satisfying
the hypothesis required in \cite[Theorem~2]{HoferToland84} to
conclude that, for each $P\leq -2$, there exists an even solution
$u$ with $u(t)\to 0$ when $t\to\pm\infty$ satisfying that $u>0$,
$u'<0$ and $(P/2)u'+u'''>0$ on $(0,\infty)$. They also prove that
for all $P\leq -2$ any such even solution is unique. From
\cite{BuffoniChampneysToland96} it follows that this unique
homoclinic orbit is transversal for the restriction to the level
surface of the hamiltonian function which contains it and,
consequently, it is non degenerate in the sense of Definition
\ref{def:nondegeneratehomoclinicorbit}. Moreover, again in
\cite{AmickToland92}, the persistence of such homoclinic solutions
is argued for $P>-2$ but close enough to $-2$. Variational methods
used in \cite{Buffoni96} allow to prove that at least one
homoclinic solution exists for $P<2$. On the other hand, in
\cite[Section 2]{BuffoniChampneysToland96} authors check all
hypothesis required in \cite[Theorem 4.4]{ChampneysToland93} to
conclude that a Belyakov-Devaney bifurcation takes place at
$P=-2$. It consists in the emerging from the primary homoclinic
solution and for each $n\in\mathbb{N}$ of a finite number of
$n$-modal secondary homoclinics (or $n$-pulses) which cut $n$
times a section transversal to the primary homoclinic orbit
\cite{Devaney76,Belyakov84,BelyakovShilnikov90}.  Heuristic
arguments in \cite{BuffoniChampneysToland96}, supported by
numerical results, show that the non-degenerate $n$-modal
homoclinic orbits arising at $P=-2$ become in degenerate orbits
and disappear gradually when $P$ varies from $P=-2$ to $P=2$
through a cascade of coalescences and bifurcations. In particular,
it is known from \cite{IoossPeroune93} that for $P$ close to $P=2$
there exist at least two even homoclinic solutions and from the
numerical results it seems that no other homoclinic orbits reaches
$P=2$.

All the above results about the existence of homoclinic solutions of (\ref{4orderequation}) can be directly translated to family (\ref{limitfamilyaftertranslation}) and also to the reversible subfamily of (\ref{eq:familia-limit-4}) obtained restricting to parameter values along the previously defined reversibility curve $\mathcal{T}$. For the later case we can conclude that (see Figure \ref{fig:L})
\begin{itemize}
\item for parameter values along $\mathcal{DF}\cup\{\mathrm{BD}\}\cup\mathcal{SR}$ there exists a symmetric homoclinic orbit at $p_-$ which is unique and non degenerate along $\{\mathrm{BD}\}\cup\mathcal{SR}$,
\item $\mathrm{BD}$ is a Belyakov-Devaney bifurcation point,
\item numerical continuation shows that the non degenerate $n$-modal homoclinic orbits arising at $\mathrm{BD}$ become in degenerate orbits and disappear gradually when parameters move along $\mathcal{DF}$ in the direction of $\mathrm{HH}$. Close to that point only two symmetric homoclinic orbits persist.
\end{itemize}

To study the persistence of homoclinic orbits we will consider (\ref{eq:familia-nilp-reescalada-4}) as an unfolding of the Belyakov-Devaney bifurcation point $\mathrm{BD}$. As already mentioned it is better to work with expression (\ref{eq:familia-nilp-reescalada-directional-translated}) for the rescaled unfolding. With respect to parameters $(\eta_2,\eta_3,\eta_4,\bar\varepsilon)$ the point $\mathrm{BD}$ corresponds to $(0,2,0,0)$.  Note that (\ref{eq:familia-nilp-reescalada-directional-translated}) can be written as
\begin{equation}
\label{eq:rescaledfamilyasperturbation}
x^{\prime }=f(x)+g(\lambda ,x),
\end{equation}
where $\lambda =(\lambda _{1},\lambda _{2},\lambda_{3},\lambda _{4})=(\eta _{2},\eta _{3}-2,\eta _{4},\overline{\varepsilon })$,
\begin{equation*}
f(x)=(x_{2},x_{3},x_{4},-x_{1}+2x_{3}+x_{1}^{2})
\end{equation*}
and
\begin{equation*}
g(\lambda,x)=(0,0,0,\lambda_1 x_2+\lambda_2 x_3+\lambda_3 x_4+\lambda_4 \kappa x_1 x_2+O(\lambda_4^2)).
\end{equation*}
As already mentioned, $q_\pm$ are the only equilibrium points of (\ref{eq:familia-nilp-reescalada-directional}) for all $\varepsilon\geq 0$ and hence $g(\lambda,0)=0$ for all $\lambda$. Observe that only bifurcations occurring inside the region of parameters with $\lambda_4>0$ will be observed in the unfolding of the singularity. Family (\ref{eq:rescaledfamilyasperturbation}) fulfills all the hypothesis imposed to (\ref{*1}). In particular, $x^{\prime }=f(x)$
satisfies the following:
\begin{description}
\item[{\rm (BD1)}] It has a first integral
    \begin{equation*}
    H(x_{1},x_{2},x_{3},x_{4})=\frac{1}{2}x_{1}^{2}-\frac{1}{3}%
    x_{1}^{3}-x_{2}^{2}+x_{2}x_{4}-\frac{1}{2}x_{3}^{2}
    \end{equation*}
\item[{\rm (BD2)}] It is time reversible with respect to
\begin{equation*}
R:(x_{1},x_{2},x_{3},x_{4})\mapsto(x_{1},-x_{2},x_{3},-x_{4}).
\end{equation*}
\item[{\rm (BD3)}] The origin is a hyperbolic equilibrium point at which the linear part has a pair of double real eigenvalues $\pm 1$.
\item[{\rm (BD4)}] According to \cite{AmickToland92}, there exists a non degenerate homoclinic orbit $\gamma=\{p(t)=(p_{1}(t),p_{2}(t),p_{3}(t),p_{4}(t))\,:\,t\in\mathbb{R}\}$ to the origin such that $p_{1}(t)$ and $p_{3}(t)$ are even functions and $p_{2}(t)$ and $p_{4}(t)$ are odd functions and, moreover, $p_1>0$, $p_2<0$ and $p_4-p_2>0$ on $(0,\infty)$.
\item[{\rm (BD5)}] According to Proposition \ref{not:3}, since $\gamma$ is non degenerate, both the variational equation $z^{\prime }=Df(p(t))z$ and its adjoint $z^{\prime}=-Df(p(t))^{\ast }z$ has a unique non trivial linearly independent bounded solution. The function $\varphi (t)=f(p(t))$ is a bounded solution of the variational equation and
    \begin{equation*}
    \psi (t)=\nabla H(p(t))=(p_{1}(t)-p_{1}(t)^{2},p_{4}(t)-2p_{2}(t),-p_{3}(t),p_{2}(t))
    \end{equation*}%
    is a bounded solution of adjoint equation.
\end{description}

Finally, let us  consider the bifurcation equation for homoclinic
solutions $\xi ^{\infty }(\lambda )=0$, with
$\xi^{\infty}:\Lambda\to\mathbb{R}$ and
$\Lambda\subset\mathbb{R}^4$ a neighbourhood of the origin, as
introduced in Lemma \ref{lem:eq-bif}. It follows from
Theorem~\ref{thm:bif} that under the generic condition
$$\nabla
\xi ^{\infty }(0)=(\xi _{\lambda_1},\xi
_{\lambda_2},\xi_{\lambda_3},\xi _{\lambda_4})\not=0$$ where
$$
\xi_{\lambda_i}=\int_{-\infty }^{\infty }\langle
\psi(t),\frac{\partial g}{\partial \lambda_i}(0,p(t))\rangle\,dt,
$$
then (\ref{eq:rescaledfamilyasperturbation}) has homoclinic orbits
(continuation of $\gamma$) for parameters on a hypersurface
$\mathcal{H}om$ with tangent subspace at $\lambda=0$ given by
\begin{equation} \xi _{\lambda_1}\lambda_1+\xi
_{\lambda_2}\lambda_2+\xi _{\lambda_3}\lambda_3+\xi
_{\lambda_4}\lambda_4 =0.  \label{eq:bi2}
\end{equation}
Note that
\begin{align*}
    \xi_{\lambda_1}&=\int_{-\infty }^{\infty }p_{2}^2(t)\,dt, &
    \xi_{\lambda_2}&=\int_{-\infty }^{\infty }p_2(t)p_{3}(t)\,dt, \\
    \xi_{\lambda_3}&=\int_{-\infty }^{\infty }p_2(t)p_{4}(t)\,dt, &
    \xi_{\lambda_4}&=\int_{-\infty }^{\infty }\kappa p_{1}(t)p_{2}^2(t)\,dt.
\end{align*}
Clearly $\xi _{\lambda_1}\neq 0$. Since $p_{2}p_{3}$ is an odd function $\xi _{\lambda_2} = 0$. Integrating by parts one gets $\xi _{\lambda_3}=-\int_{-\infty }^{\infty }p_{3}(t)^2\,dt \neq 0$. Finally, since
$p_1$ is a positive function, we also get that $\xi_{\lambda_4}\neq 0$. Therefore the tangent subspace (\ref{eq:bi2}) intersects $\lambda_4=0$ transversely. Consequently $\mathcal{H}om$ also meets $\lambda_4=0$ transversely.

Now we have to study the eigenvalues at the equilibrium point in order to determine which types of homoclinic orbits can be unfolded by the singularity. Since for $\lambda=0$ the linear part at $x=0$ has a pair of double real eigenvalues $\pm 1$ and $\mathrm{dim}W^s(0)=\mathrm{dim}W^u(0)=2$, for all $\lambda$ small enough, then we can expect three different types of equilibrium: a focus-focus (bifocus), a node-node or a focus-node. It easily follows that the characteristic polynomial at $x=0$ is given by
$$
Q(r,\lambda)=r^4-D(\lambda)r^3-C(\lambda)r^2-B(\lambda)r-A(\lambda),
$$
with
$$
\begin{array}{ll}
A(\lambda)=-1+O(\lambda_4^2) & B(\lambda)=\lambda_1+O(\lambda_4^2)
\\
C(\lambda)=2+\lambda_2+O(\lambda_4^2) & D(\lambda)=\lambda_3+O(\lambda_4^2).
\end{array}
$$
The condition for an improper node is given by the discriminant equations
$$
Q(r,\lambda)=0,
\qquad
\frac{\partial Q}{\partial r}(r,\lambda)=0.
$$
Note that $(r,\lambda)=(\pm 1,0)$ are both solutions of the discriminant equations. Now it follows from a straightforward application of the Implicit Function Theorem that there exist two hypersurfaces $\mathcal{D^-}$ and $\mathcal{D^+}$ through the origin in the parameter space such that, for parameter values on $\mathcal{D^-}$ (resp. $\mathcal{D^+}$) the equilibrium point at the origin has a double negative (resp. positive) real eigenvalue. Moreover the respective tangent subspaces at $\lambda=0$ are $\lambda_1-\lambda_2+\lambda_3=0$ and $\lambda_1+\lambda_2+\lambda_3=0$. Let $N_{\mathcal{H}om}=(\xi_{\lambda_1},\xi_{\lambda_2},\xi_{\lambda_3},\xi_{\lambda_4})$, $N_{\mathcal{D^-}}=(1,-1,1,0)$ and $N_{\mathcal{D^+}}=(1,1,1,0)$ be the normal vectors to the tangent spaces of $\mathcal{H}om$, $\mathcal{D^-}$ and $\mathcal{D^+}$ at $\lambda=0$, respectively. Moreover denote $N_{\lambda_4=0}=(0,0,0,1)$. Since $\mathrm{rank}(N_{\mathcal{H}om},N_{\mathcal{D^-}},N_{\lambda_4=0})=3$, there exists a surface $\mathcal{H}om^-=\mathcal{H}om\cap\mathcal{D^-}$ transverse to $\lambda_4=0$ of homoclinic orbits to an equilibrium point with a double negative real eigenvalue. Moreover, since $\mathrm{rank}(N_{\mathcal{H}om},N_{\mathcal{D^+}},N_{\lambda_4=0})=3$, there exists a surface $\mathcal{H}om^+=\mathcal{H}om\cap\mathcal{D^+}$ transverse to $\lambda_4=0$ of homoclinic orbits to an equilibrium point with a double positive real eigenvalue. On the other hand $\mathrm{rank}(N_{\mathcal{H}om},N_{\mathcal{D^-}},N_{\mathcal{D^+}},N_{\lambda_4=0})=4$ if and only if $\xi_{\lambda_1}-\xi_{\lambda_3}\neq 0$. But, taking into account that $p_2$ and $p_4$ are odd functions and also that $p_2<0$ and $p_4-p_2>0$ on $(0,\infty)$ it follows that
$$
\xi_{\lambda_1}-\xi_{\lambda_3}=\int_{-\infty}^{\infty}p_2(t)(p_2(t)-p_4(t))\,dt>0.
$$
Hence we can conclude that $\mathrm{rank}(N_{\mathcal{H}om},N_{\mathcal{D^-}},N_{\mathcal{D^+}},N_{\lambda_4=0})=4$. Therefore there exists a curve $\mathcal{H}om^{\pm}=\mathcal{H}om\cap\mathcal{D^-}\cap\mathcal{D^+}$ transverse to $\lambda_4=0$ of homoclinic orbits to an equilibrium point with a pair of double real eigenvalues one positive and the other negative.

Summarizing, we have proved the following result
\begin{thm}
\label{thm:hombif}
In a neighbourhood of $\lambda=0$ there exists a bifurcation hypersurface $\mathcal{H}om$ corresponding to parameter values for which (\ref{eq:rescaledfamilyasperturbation}) has homoclinic orbits to the origin. Moreover there exist two bifurcation surfaces $\mathcal{H}om^-$ and $\mathcal{H}om^+$ contained in $\mathcal{H}om$ corresponding to parameter values for which the origin has a double negative and positive, respectively, real eigenvalue. The surfaces $\mathcal{H}om^+$ and $\mathcal{H}om^-$ intersect transversely along a curve $\mathcal{H}om^{\pm}$ corresponding to parameter values for which the origin has a pair of double real eigenvalues $\{r_1,r_2\}$ with $r_1<0<r_2$. $\mathcal{H}om^-\cup \mathcal{H}om^+$ splits $\mathcal{H}om$ into four regions:
\begin{enumerate}
\renewcommand{\labelenumi}{(\roman{enumi})}
\item $\mathcal{H}om_{FF}$: homoclinic orbits to a focus-focus equilibrium (bifocus case),
\item $\mathcal{H}om_{N^+F^-}$: homoclinic orbits to a (repelling) node-(attracting) focus equilibrium,
\item $\mathcal{H}om_{F^+N^-}$: homoclinic orbits to a (repelling) focus-(attracting) node equilibrium,
\item $\mathcal{H}om_{NN}$: homoclinic orbits to a node-node equilibrium.
\end{enumerate}
All bifurcations are transverse to $\lambda_4=0$.
\end{thm}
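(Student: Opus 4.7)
The plan is to derive the theorem from the general bifurcation framework of Theorem~\ref{thm:bif} combined with a direct analysis of the characteristic polynomial at the continued equilibrium. First, I would apply Lemma~\ref{lem:eq-bif} and Theorem~\ref{thm:bif} to (\ref{eq:rescaledfamilyasperturbation}): since $\gamma$ is non degenerate and the adjoint variational equation has a unique (up to multiples) bounded solution $\psi(t)=\nabla H(p(t))$ by (BD5), the bifurcation equation reduces to a single scalar equation $\xi^\infty(\lambda)=0$ with $\lambda\in\mathbb{R}^4$. To conclude that $\mathcal{H}om$ is a smooth hypersurface I must check $\nabla\xi^\infty(0)\neq 0$, which amounts to computing the four Melnikov-type integrals $\xi_{\lambda_i}$. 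Using (BD4), where $p_1,p_3$ are even and $p_2,p_4$ are odd, one sees immediately that $\xi_{\lambda_2}=0$, while $\xi_{\lambda_1}=\int p_2^2>0$; an integration by parts (employing $p_3'=p_4$ and $p_2'=p_3$) converts $\xi_{\lambda_3}$ into $-\int p_3^2<0$; and the positivity $p_1>0$ on $(0,\infty)$ gives $\xi_{\lambda_4}\neq 0$. In particular the component $\xi_{\lambda_4}$ is nonzero, so $\mathcal{H}om$ meets $\{\lambda_4=0\}$ transversely.

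Next, I would locate the two codimension-one strata of double real eigenvalues. Writing the characteristic polynomial $Q(r,\lambda)$ at the equilibrium continued from the origin and solving the discriminant system $Q=0$, $\partial Q/\partial r=0$ near $(r,\lambda)=(\pm 1,0)$ by the Implicit Function Theorem yields two smooth hypersurfaces $\mathcal{D}^-$ and $\mathcal{D}^+$. A direct expansion of $Q$ in the small parameters gives the tangent normals $N_{\mathcal{D}^-}=(1,-1,1,0)$ and $N_{\mathcal{D}^+}=(1,1,1,0)$.

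The final step is a sequence of rank checks of normal vectors. Since $N_{\mathcal{H}om}=(\xi_{\lambda_1},0,\xi_{\lambda_3},\xi_{\lambda_4})$ with $\xi_{\lambda_4}\neq 0$, each triple $(N_{\mathcal{H}om},N_{\mathcal{D}^\pm},N_{\lambda_4=0})$ has rank three, so $\mathcal{H}om^\pm=\mathcal{H}om\cap\mathcal{D}^\pm$ are smooth surfaces transverse to $\{\lambda_4=0\}$. The transversality of $\mathcal{H}om^-$ and $\mathcal{H}om^+$ inside $\mathcal{H}om$ is equivalent to the nonvanishing of the $4\times 4$ determinant formed by $(N_{\mathcal{H}om},N_{\mathcal{D}^-},N_{\mathcal{D}^+},N_{\lambda_4=0})$, which expands to a nonzero multiple of $\xi_{\lambda_1}-\xi_{\lambda_3}$. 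Writing this difference as
\begin{equation*}
\xi_{\lambda_1}-\xi_{\lambda_3}=\int_{-\infty}^{\infty}p_2(t)\bigl(p_2(t)-p_4(t)\bigr)\,dt,
\end{equation*}
the integrand is even and on $(0,\infty)$ is the product of the negative function $p_2$ and the negative function $p_2-p_4$ (by the sign assertions in (BD4)), so the integral is strictly positive. The four open regions of $\mathcal{H}om$ then correspond to the sign patterns of the two discriminants, which control whether each pair of eigenvalues is a real double pair (node) or splits into a complex conjugate pair (focus); these four combinations give $\mathcal{H}om_{FF}$, $\mathcal{H}om_{N^+F^-}$, $\mathcal{H}om_{F^+N^-}$ and $\mathcal{H}om_{NN}$.

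The step I expect to be the most delicate is the verification that $\xi_{\lambda_1}-\xi_{\lambda_3}>0$, because it depends on the qualitative sign information about the Amick--Toland homoclinic orbit recorded in (BD4); without this input one would only obtain the two surfaces $\mathcal{H}om^\pm$ but not the transverse intersection curve $\mathcal{H}om^{\pm}$ and, consequently, not the four-region picture that places the bifocal stratum $\mathcal{H}om_{FF}$ inside a generic unfolding.
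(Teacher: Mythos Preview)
Your proposal is correct and follows the paper's argument essentially step for step: compute the four Melnikov integrals using the parities and signs in (BD4), apply Theorem~\ref{thm:bif} to obtain $\mathcal{H}om$, locate $\mathcal{D}^\pm$ via the discriminant system $Q=Q_r=0$ with the tangent normals $(1,\mp 1,1,0)$, and finish with the same rank checks, including the key computation $\xi_{\lambda_1}-\xi_{\lambda_3}=\int p_2(p_2-p_4)>0$. One small slip worth fixing: the transversality of $\mathcal{H}om$ to $\{\lambda_4=0\}$ (and the rank-three condition for the triples $(N_{\mathcal{H}om},N_{\mathcal{D}^\pm},N_{\lambda_4=0})$) is guaranteed by $\xi_{\lambda_1}\neq 0$ (or $\xi_{\lambda_3}\neq 0$), not by $\xi_{\lambda_4}\neq 0$ as you write, since two hyperplanes with normals $N_{\mathcal{H}om}$ and $(0,0,0,1)$ are transverse precisely when $N_{\mathcal{H}om}$ has a nonzero entry among its first three components.
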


Since all bifurcations are transverse to $\lambda_4=0$ they are also present in the unfolding of the nilpotent singularity of codimension four. Particularly, Theorem B follows as a corollary of Theorem \ref{thm:hombif}.

\begin{rem}
Recall that the bifurcation point $\lambda=0$ in (\ref{eq:rescaledfamilyasperturbation}) corresponds to the Belyakov-Devaney bifurcation point $\mathrm{BD}$ in (\ref{eq:familia-nilp-reescalada-4}). In particular, the hypersurface of parameters corresponding to homoclinic orbits to a node-node equilibrium point in family (\ref{eq:familia-nilp-reescalada-4}) cuts $\varepsilon=0$ along the curve $\mathcal{SR}$. This follows by restricting the bifurcation analysis to $\lambda_4=0$.
\end{rem}

\section{Nilpotent singularity of codimension 3 in $\mathbb{R}^{3}$.}\label{section:nilpotent-3}
We will prove that in any generic unfolding of the nilpotent singularity of codimension three in $\mathbb{R}^3$ there exists a one-side bifurcation curve of topological Bykov cycles.

Along this section we will take $n=3$ in all the general expressions introduced in \S\ref{section:nilpotent-n}. It follows from Lemma \ref{lem:Drubi} that any generic unfolding of the nilpotent singularity of codimension three in $\mathbb{R}^3$ can be written as in (\ref{eq:5}). After applying the rescaling (\ref{generalrescaling}) we get
\begin{equation}
\label{eq:familia-nilp-3}
y_{2}\frac{\partial }{\partial y_{1}}+y_{3}\frac{\partial }{\partial y_{2}}+%
\big(\nu _{1}+\nu _{2}y_{2}+\nu _{3}y_{3}+y_{1}^{2}+\varepsilon \kappa
y_{1}y_{2}+O(\varepsilon ^{2})\big)\frac{\partial }{\partial y_{3}}.
\end{equation}
with $\nu=(\nu_1,\nu_2,\nu_3)\in \mathbb{S}^2$ and $\varepsilon>0$.

As mentioned in \S\ref{rescalingandlimitfamilies} the first step to understand the dynamics arising in
(\ref{eq:familia-nilp-3}) is the study of the limit family
\begin{eqnarray*}
y_{2}\frac{\partial }{\partial y_{1}}+y_{3}\frac{\partial }{\partial y_{2}}+%
\big(\nu _{1}+\nu _{2}y_{2}+\nu _{3}y_{3}+y_{1}^{2}\big)\frac{\partial }{%
\partial y_{3}}.
\end{eqnarray*}

The above family has already been treated in the literature and discussions about several aspects of the dynamics can be seen in \cite{Dumortier,dumibakok2001,dumibakok2006,IbanezRodriguez} and refe\-rences there included. As in the case of the nilpotent singularity of codimension four in $\mathbb{R}^4$ we are interested in the dynamics close to the reversibility curve $\mathcal{T}=\{(\nu_1,\nu_2,\nu_3)\in\mathbb{S}^2\,:\,\nu_3=0\}.$
Particularly, we will pay attention to parameters with $\nu_1<0$ and $\nu_2 <0$.

To study family (\ref{eq:familia-nilp-3}) close to the reversibility curve it is more convenient to use a directional version of the rescaling (\ref{generalrescaling}) taking $\nu_2=-1$ and $(\nu_1,\nu_3)=(\bar\nu_1,\bar\nu_3)\in\mathbb{R}^2$ to get
\begin{eqnarray*}
y_{2}\frac{\partial }{\partial y_{1}}+y_{3}\frac{\partial }{\partial y_{2}}+%
\big(\bar\nu _{1}-y_{2}+\bar\nu _{3}y_{3}+y_{1}^{2}+\varepsilon \kappa
y_{1}y_{2}+O(\varepsilon ^{2})\big)\frac{\partial }{\partial y_{3}}.
\end{eqnarray*}
Moreover, in order to use results already present in the literature, we introduce new variables $(x_1,x_2,x_3)=-2(y_1,y_2,y_3)$ and write
$-2\bar\nu_1=c^2$ when $\nu_1<0$ to obtain
\begin{equation}
\label{eq:familia-nilp-reescalada-directional-michelson}
x_{2}\frac{\partial }{\partial x_{1}}+x_{3}\frac{\partial }{\partial x_{2}}+%
\big(c^2-x_{2}+\bar\nu _{3}x_{3}-\frac{1}{2}x_{1}^{2}-2\varepsilon \kappa
x_{1}x_{2}+O(\varepsilon ^{2})\big)\frac{\partial }{\partial x_{3}}.
\end{equation}
In the above expression, taking the limit case $\varepsilon=0$ and also $\bar\nu_3=0$ we get the $1$-parameter family
\begin{eqnarray*}
x_{2}\frac{\partial }{\partial x_{1}}+x_{3}\frac{\partial }{\partial x_{2}}+%
\big(c^2-x_{2}-\frac{1}{2}x_{1}^{2}\big)\frac{\partial }{\partial x_{3}}.
\end{eqnarray*}
As mentioned in the introduction the above family has been extensively studied in the literature. It is commonly referred as the \emph{Michelson system} and it has the following properties:
\begin{description}
\item[{\rm (M1)}] For all $c\geq 0$ the system has only two equilibrium points $Q_\pm=(\pm\sqrt{2}c,0,0)$ where the characteristic equation of the linear part is given by $r^3+r\mp\sqrt{2}c$.
\item[{\rm (M2)}] The eigenvalues at $Q_+$ (resp. $Q_-$) are $\lambda$ and $-\rho\pm i\omega$ (resp. $-\lambda$ and $\rho\pm i\omega$) with $\lambda>0$, $\rho>0$ and $\omega\neq 0$. Therefore $\mathrm{dim}W^s(Q_+)=\mathrm{dim}W^u(Q_-)=2$. Moreover, since the Michelson system has zero divergence, $\lambda-2\rho=0$ and hence $\rho<\lambda$.
\item[{\rm (M3)}] It is time-reversible with respect to the involution
$$
R:(x_1,x_2,x_3)\mapsto(-x_1,x_2,-x_3).
$$
\item[{\rm (M4)}] It follows from \cite{Kuramoto} that the Michelson system has a solution $p(t)$ given by
\begin{equation}
p_{1}(t)=\alpha (-9\tanh \beta t+11\tanh ^{3}\beta
t),~p_{2}(t)=p_{1}^{\prime }(t),~p_{3}(t)=p_{1}^{\prime \prime }(t)
\label{14*}
\end{equation}
with $\alpha =15\sqrt{11/19^{3}}$ and $\beta =\sqrt{11/19}/2$, when $c=c_{k}=\sqrt{2}\alpha$. It
parametrizes an orbit $\Gamma _{1}$ along which two branches of the 1-dimensional
invariant manifolds coincide. The orbit $\Gamma_1$ is invariant by the reversibility, that is,
\begin{equation*}
p(-t)=(p_1(-t),p_2(-t),p_3(-t))=R p(t)=(-p_1(t),p_2(t),-p_3(t)).
\end{equation*}%
Hence $p_1$ and $p_3$ are odd functions and $p_2$ is an even function.
\item[{\rm (M5)}] It follows from \cite{IbanezRodriguez} that when $c=c_{k}$ there also exists an orbit $\Gamma_2=W^s(Q_+) \cap W^u(Q_-)$. Moreover the intersection is topologically transversal.
\end{description}

Putting together (M4) and (M5) it follows that
\begin{description}
\item[{\rm (M6)}] When $c=c_k$ the Michelson system has a topological Bykov cycle.
\end{description}

Let us recall the notion of topological Bykov cycle.

\begin{defi}
\label{def:Tpoint}
Consider a vector field in $\mathbb{R}^{3}$ with two hyperbolic equilibrium points $p_{\pm }$ satisfying $\mathrm{dim} W^{s}(p_{+})=\mathrm{dim} W^{u}(p_{-})=2$. Any heteroclinic cycle consisting of two heteroclinic orbits $\Gamma _{1}\subseteq W^{u}(p_{+})\cap W^{s}(p_{-})$ and $
\Gamma _{2}\subseteq W^{s}(p_{+})\cap W^{u}(p_{-})$ between the equilibrium points $p_{\pm }$ such that the intersection along $\Gamma _{2}$ is transversal is called a T-point. When at $p_{\pm }$ the linear part has complex eigenvalues the T-point is called a Bykov cycle. In both cases, if the intersection along $\Gamma_2$ is only topologically transversal we refer to a topological T-point or a topological Bykov cycle.
\end{defi}

\begin{rem} Let us recall the notion of topological transversality. Let $D$ be a {\rm 2}-dimensional open disk transversal to the flow at some point $T\in \Gamma _{2}$. Let $S$ and $U$ be the connected components of
$W^{u}(P_{-})\cap D$ and $W^{s}(P_{+})\cap D$, respectively, containing the point $T$. If the diameter of $D$ is small enough both
$S$ and $U$ split $D$ into two connected components. The connection $\Gamma _{2}$ is said \emph{topologically transversal} if the two connected components of $S\setminus \{T\}$ belong to different connected components of $D\setminus U$.
\end{rem}

To study the persistence of the Bykov cycle we will consider (\ref{eq:familia-nilp-reescalada-directional-michelson}) as an unfolding of the Michelson system at the Kuramoto point $c=c_k$ introduced in (M4). Note that (\ref{eq:familia-nilp-reescalada-directional-michelson}) can be written as
\begin{equation}
\label{eq:rescaledfamilyasperturbation3}
x'=f(x)+g(\lambda,x),
\end{equation}
where $\lambda=(\lambda_1,\lambda_2,\lambda_3)=(c^2-c_k^2,\bar\nu_3,\varepsilon)$,
$$
f(x)=(x_2,x_3,c_k^2-x_2-\frac{1}{2}x_1^2)
$$
and
$$
g(\lambda,x)=(0,0,\lambda_1+\lambda_2 x_3-2\lambda_3 x_1 x_2+O(\lambda_3^2)).
$$
Bifurcations occurring inside the region of parameters with $\lambda_3>0$ will be observed in the unfolding of the singularity. Family (\ref{eq:rescaledfamilyasperturbation3}) fulfills all the hypothesis imposed to (\ref{*1}). Particularly $x'=f(x)$ satisfies all properties from (M1) to (M6).

The heteroclinic orbit $\Gamma _{1}$ is non degenerate of codimension two in the sense of Definition \ref{def:nondegeneratehomoclinicorbit}. Hence the variational equation $z^{\prime}(t)=Df(p(t))z(t)$ has a unique (up to multiplicative constants) bounded solution $f(p(t))$ whereas the adjoint variational equation $w^{\prime}(t)=-Df(p(t))^{\ast }w(t)$ has a pair of li\-nearly independent bounded solutions, again according to Proposition \ref{not:3}.
Let $\varphi(t)=(\varphi_1(t),\varphi_2(t),\varphi_3(t))$ and $\psi(t)=(\psi_1(t),\psi_2(t),\psi_3(t))$ two of such solutions. Since $\varphi(t)\wedge\psi(t)$ is a bounded solution of the variational equation it follows that the plane determined by $\varphi(t)$ and $\psi(t)$ is orthogonal to $f(p(t))$ for all values of $t$. Therefore, all solutions of the adjoint variational equations with initial conditions on $f(p(0))^\perp$ are bounded solutions.

On the other hand, it easily follows that $w^{\prime}(t)=-Df(p(t))^{\ast }w(t)$ is inva\-riant under the involutions $(w_1,w_2,w_3)\mapsto (-w_1,w_2,-w_3)$ and $(w_1,w_2,w_3)\mapsto (w_1,-w_2,w_3)$ and the time reverse $t\mapsto -t$. Therefore, taking $\varphi(0)=(0,-1,0)$ and $\psi(0)=(1-c_k^2/p_2(0),0,1)$ we can conclude that $\varphi(t)$ and $\psi(t)$ are bounded solutions of the adjoint variational equation and that $\varphi_1$, $\varphi_3$ and $\psi_2$ are odd functions whereas $\varphi_2$, $\psi_1$ and $\psi_3$ are even functions.

\begin{figure}[h]
\begin{center}
\hspace*{-1cm}
\ifpdf\includegraphics[width=0.6\textwidth,keepaspectratio=true]{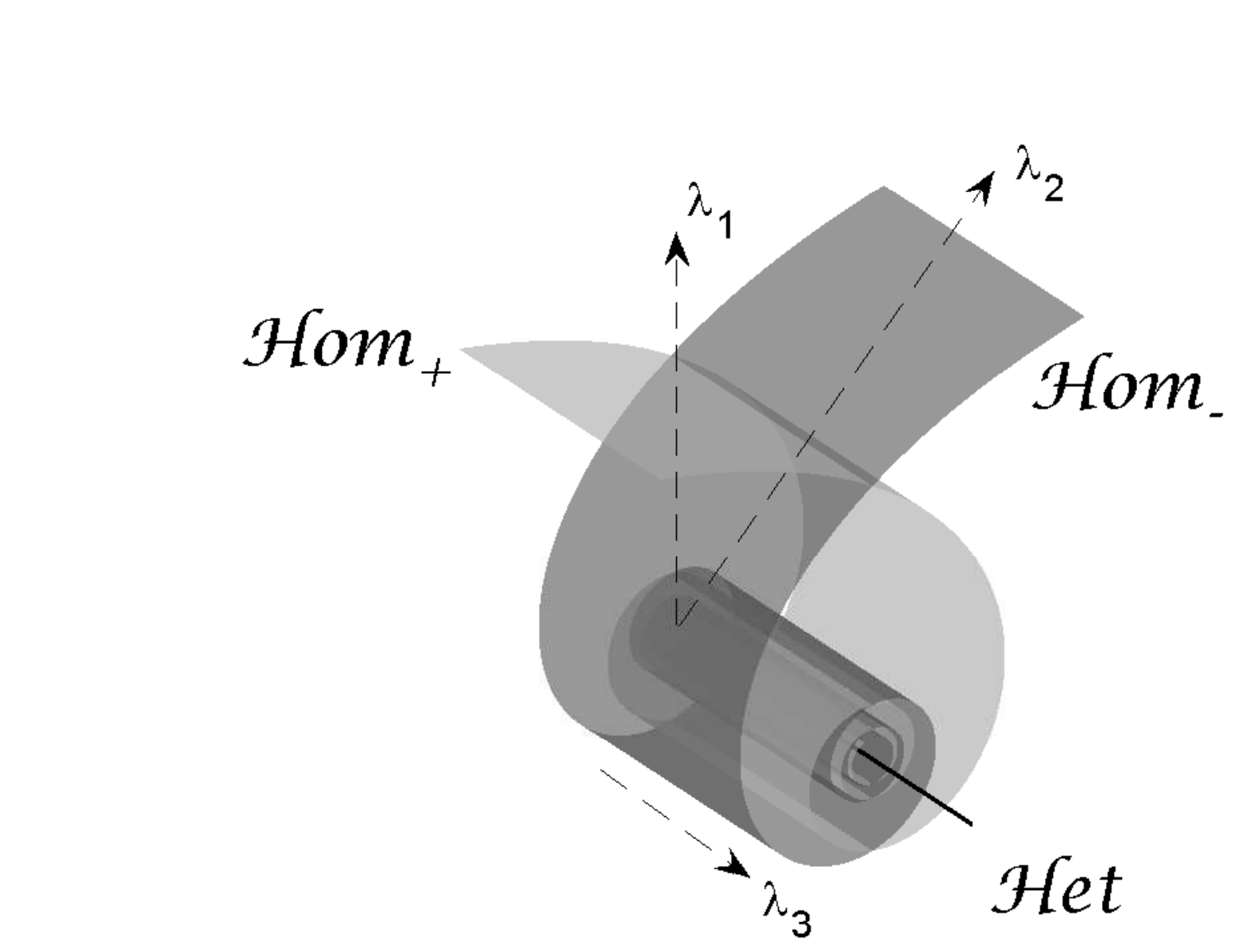}\else%
\includegraphics[width=0.6\textwidth,keepaspectratio=true]{byk.eps}\fi%
\end{center}
\caption{Sketch showing the bifurcation curve $\mathcal{H}et$ to topological Bykov cycles in family (\ref{eq:rescaledfamilyasperturbation3}) and the spirals of bifurcation to homoclinic orbits $\mathcal{H}om_+$ and $\mathcal{H}om_-$ for $Q_+$ and $Q_-$, respectively. Note that when $\lambda_3=0$, the existence of $\mathcal{H}om_-$ follows from the symmetry with respect to (\ref{map}).}
\label{fig:H}
\end{figure}

Consider now the bifurcation equation for homoclinic solutions
$\xi ^{\infty }(\lambda )=0$, with
$\xi^{\infty}=(\xi_1,\xi_2):\Lambda\to\mathbb{R}^2$ and
$\Lambda\subset\mathbb{R}^3$ a neighbourhood of the origin, as
introduced in Lemma \ref{lem:eq-bif}. It follows from
Theorem~\ref{thm:bif} that, under the generic condition
$\mathrm{rank}D_\lambda\xi^\infty(0)=2$, where
$$
D_\lambda\xi^\infty(0)=
\left(
\begin{array}{llll}
\xi_{1,\lambda_1}&\xi_{1,\lambda_2}&\xi_{1,\lambda_3}
\\
\xi_{2,\lambda_1}&\xi_{2,\lambda_2}&\xi_{2,\lambda_3}
\end{array}
\right)
$$
with
\begin{align}
    \xi_{1,\lambda_1}&=\int_{-\infty }^{\infty }\varphi_3(t)\,dt,       & \xi_{2,\lambda_1}&=\int_{-\infty }^{\infty }\psi_3(t)\,dt,\nonumber
    \\
    \xi_{1,\lambda_2}&=\int_{-\infty }^{\infty }\varphi_3(t)p_3(t)\,dt, & \xi_{2,\lambda_2}&=\int_{-\infty }^{\infty }\psi_3(t)p_3(t)\,dt, \label{coefficients}
    \\
    \xi_{1,\lambda_3}&=\int_{-\infty }^{\infty }-2\kappa\varphi_3(t)p_1(t)p_2(t)\,dt,
    &
    \xi_{2,\lambda_3}&=\int_{-\infty }^{\infty }-2\kappa\psi_3(t)p_1(t)p_2(t)\,dt,\nonumber
\end{align}
then (\ref{eq:rescaledfamilyasperturbation3}) has heteroclinic orbits (continuation of $\Gamma_1$) for parameters on a bifurcation curve $\mathcal{H}et$ with tangent subspace at $\lambda=0$ given by the intersection of the planes
$$
\begin{array}{l}
\xi _{1,\lambda_1}\lambda_1+\xi _{1,\lambda_2}\lambda_2+\xi _{1,\lambda_3}\lambda_3=0
\\
\xi _{2,\lambda_1}\lambda_1+\xi _{2,\lambda_2}\lambda_2+\xi _{2,\lambda_3}\lambda_3=0.
\end{array}
$$
From the parities of $p_1$, $p_2$, $p_3$, $\varphi_3$ and $\psi_3$ it follows that
$$
    \xi_{1,\lambda_1}=\xi_{2,\lambda_2}=\xi_{2,\lambda_3}=0.
$$
On the other hand, in Appendix~\ref{apendixB} we will show that
$$
    \xi_{1,\lambda_2} \neq 0, \quad
    \xi_{1,\lambda_3} \neq 0, \quad
    \xi_{2,\lambda_1} \neq 0.
$$
Hence, it is a straightforward application of the Implicit Function Theorem that, indeed, there exists a bifurcation curve $\mathcal{H}et$ of heteroclinic connections along the one dimensional invariant manifolds. Moreover it easily follows that
the tangent space at $\lambda=0$ is generated by a vector $(0,-\xi_{1,\lambda_3}/\xi_{1,\lambda_2},1)$ and hence $\mathcal{H}et$ intersects $\lambda_3=0$ transversely. Since $\Gamma_2$ is a topologically transverse intersection, $\mathcal{H}et$ is a bifurcation curve of topological Bykov cycles. This concludes the proof of Theorem C which was stated in the introduction.

As already mentioned there is a Shil'nikov bifurcation surface $\mathcal{H}om_+$ shaped as a scroll around $\mathcal{H}et$ (see Figure \ref{fig:H}) corresponding to parameter values for which the system has a Shil'nikov homoclinic orbit to $Q_+$. Note that the Shil'nikov condition is open and hence it follows from (M2). Moreover since the trace of the linear part at $Q_+$ is given by $\lambda_2$ the dissipative condition is also satisfied in $\mathcal{H}om_+\cap\{\lambda\in\mathbb{R}^3:\lambda_2<0\}$. Hence strange attractors exist for parameter values on positive Lebesgue measure set.
\begin{rem}
In~\cite{dumibakok2006} the existence of subsidiary Bykov cycles in the Michelson system for values of $c$ close to $c_k$ is also discussed. Moreover, in \cite{Teixeira} the accumulation of Bykov cycles when $c\to 0$ is also argued. In all cases the heteroclinic connections have the symmetry properties that we have just used. It should be possible to extend our result to conclude that in (\ref{eq:familia-nilp-reescalada-directional-michelson}), and consequently in (\ref{eq:familia-nilp-3}), there exist more bifurcation curves to Bykov cycles and particularly an infinite sequence of such type of bifurcation curves. Nevertheless, the generic conditions on the bifurcation equation need to be checked.
\end{rem}

\section*{Acknowledgements}
The authors have been partially supported by the project
MTM2008-06065. The first author has also been supported by the FPU
grant AP2007-031035. We are grateful to Prof. Pablo P\'erez for
his assistance with numerical computations.
\newpage\appendix
\renewcommand{\thesection}{A}
\renewcommand{\theequation}{A.\arabic{equation}}
\setcounter{equation}{0} \setcounter{thm}{0}
\section{Proof of Theorem~\ref{thm:hamiltonian}}
\label{apendixA} We will use the following technical result:
\begin{lem} \label{anex:lema1}
Given a symmetric upper anti-triangular matrix
\begin{equation*}
 A=
\begin{pmatrix}
 a_m     & a_{m-1}  & \ldots    & a_2     & 1 \\
 a_{m-1} &          & \iddots   & \iddots  & 0 \\
 \vdots  & \iddots  & \iddots   &\iddots&\vdots\\
   a_2   &  \iddots &  \iddots  &      & \vdots\\
    1    &    0     & \ldots & \ldots  & 0
\end{pmatrix},
\end{equation*}
$A^{-1}$ is a lower anti-triangular symmetric matrix
\begin{equation*}
 A^{-1}=
\begin{pmatrix}
 0     & \ldots   & \ldots  & 0  & 1   \\
\vdots &          & \iddots & \iddots & b_2 \\
\vdots &  \iddots & \iddots & \iddots & \vdots\\
 0     &  \iddots & \iddots &         & b_{m-1}\\
 1     &  b_2     & \ldots  & b_{m-1} & b_{m}
\end{pmatrix}
\end{equation*}
where, given $b_1=1$,
\begin{equation*}
b_i=-\sum_{\ell=1}^{i-1}a_{i-\ell+1}b_\ell
\end{equation*}
for $i=2,\ldots,m$.
\end{lem}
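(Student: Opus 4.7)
\bigskip

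\noindent\textbf{Proof proposal for Lemma A.1.} The plan is a direct verification that $A \cdot B = I$, where $B$ denotes the matrix with entries given by the recursion. The key observation is that both $A$ and the claimed $B$ have a very simple description once we adopt the convention $b_1 = a_1 = 1$: namely,
\[
A_{ij}=\begin{cases} a_{m-i-j+2} & \text{if } i+j\le m+1,\\ 0 & \text{otherwise,}\end{cases}\qquad B_{ij}=\begin{cases} b_{i+j-m} & \text{if } i+j\ge m+1,\\ 0 & \text{otherwise.}\end{cases}
\]
In particular, symmetry of both matrices is built into these formulas, so the lemma reduces to showing $AB=I$; uniqueness of inverse then gives $B=A^{-1}$ automatically.

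First I would compute
\[
(AB)_{ij}=\sum_{k=1}^m A_{ik}B_{kj}=\sum_{k} a_{m-i-k+2}\,b_{k+j-m},
\]
where the sum is restricted to the $k$ for which both factors are nonzero, i.e.\ $m+1-j\le k\le m+1-i$. Hence the sum is empty when $i>j$, which immediately gives $(AB)_{ij}=0$ below the diagonal (and by the symmetry argument above, or by mirror-image reasoning, also above the diagonal after the next step is done for $i\le j$).

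Next, for $i\le j$ I would change the summation index to $\ell=k+j-m$, obtaining
\[
(AB)_{ij}=\sum_{\ell=1}^{s+1} a_{s-\ell+2}\,b_\ell,\qquad s:=j-i\ge 0.
\]
When $s=0$ only the term $\ell=1$ survives and it equals $a_1 b_1=1$, giving $(AB)_{ii}=1$. When $s\ge 1$, I would split off the term $\ell=s+1$ (which is $a_1 b_{s+1}=b_{s+1}$) and recognise the remaining sum $\sum_{\ell=1}^{s}a_{s-\ell+2}b_\ell$ as exactly $-b_{s+1}$ by the defining recursion for $b_{s+1}$. Thus $(AB)_{ij}=b_{s+1}-b_{s+1}=0$ for $i<j$.

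The proof is essentially bookkeeping with the index shifts; the only point that requires a moment's care is to verify that the recursion used to define $b_i$ matches precisely the collapsing sum obtained after the change of variable $\ell=k+j-m$. Everything else (the off-diagonal vanishing below the diagonal, the anti-triangular shape of $B$, and the symmetry of $B$) follows from the compact closed form for the supports of $A$ and $B$ combined with uniqueness of the inverse.
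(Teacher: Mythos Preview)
Your proof is correct and is actually cleaner than the paper's, but it follows a genuinely different route. The paper does not verify $AB=I$ directly; instead it premultiplies $A$ by the anti-diagonal permutation matrix $P$ to obtain a lower-triangular Toeplitz matrix $L=PA$, invokes the standard recursive formula for the entries of the inverse of a triangular matrix, proves by induction that $L^{-1}$ is again Toeplitz (i.e.\ $b_{i,j}=b_{i+1,j+1}$), and finally recovers $A^{-1}=L^{-1}P$. Your argument bypasses all of this structure: once you write $A_{ij}=a_{m-i-j+2}$ and $B_{ij}=b_{i+j-m}$ on the appropriate supports, the product $(AB)_{ij}$ collapses after the single index shift $\ell=k+j-m$ to exactly the defining recursion for $b_{s+1}$, so the identity $AB=I$ is immediate. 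What the paper's approach buys is a conceptual explanation of why the inverse has the constant-along-anti-diagonals shape (it inherits the Toeplitz property from $L^{-1}$); what your approach buys is brevity and the avoidance of an auxiliary induction. Both are perfectly valid for the lemma as stated.
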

\begin{proof}
Let $P$ be an anti-diagonal matrix with all entries equal to $1$.
Hence
\begin{equation*}
 L=PA=
\begin{pmatrix}
 1      & 0     & \ldots   &  \ldots     & 0 \\
 a_2    & \ddots  & \ddots   &          & \vdots \\
 \vdots & \ddots  & \ddots   &\ddots  & \vdots \\
 a_{m-1} &         & \ddots   &  \ddots   & 0  \\
 a_m   &  a_{m-1}  & \ldots   &  a_2    & 1
\end{pmatrix}.
\end{equation*}
is a lower triangular matrix.  Therefore, $L^{-1}=(b_{i,j})$ is  also a lower triangular matrix and hence $A^{-1}=L^{-1}P^{-1}=L^{-1}P$ is a lower anti-triangular matrix. In fact, using
the well know formulas for the calculation of the inverse of a triangular matrix, it follows that, for all $j=1,\ldots,m$
\begin{align*}
b_{j,j}&=1,\\
b_{i,j}&=0 \quad \text{for all} \ i=1,\ldots,j-1,\\
b_{i,j}&=-\sum_{\ell=j}^{i-1} a_{i-\ell+1} b_{\ell,j} \quad \text{for all} \ i=j+1,\ldots,m.
\end{align*}

On the other hand, $b_{i,j}=b_{i+1,j+1}$ for all $i=j+1,\ldots,m-1$. Indeed it is clear for $i=j+1$. For $i=j+2, \ldots, m-1$ we can argue by induction.

Finally, by defining $b_i=b_{i,1}$  for all $i=1,\ldots,m$, and calculating $A^{-1}=L^{-1}P$ the proof is finished.
\end{proof}

It follows from Lemma~\ref{anex:lema1} that
\begin{equation*}
 S^{-1}=
\begin{pmatrix}
 0     & \ldots   & \ldots  & 0  & 1   \\
\vdots &          & \iddots & \iddots & b_2 \\
\vdots &  \iddots & \iddots & \iddots & \vdots\\
 0     &  \iddots & \iddots &         & b_{m-1}\\
 1     &  b_2     & \ldots  & b_{m-1} & b_{m}
\end{pmatrix},
\end{equation*}
where, defining $b_1=1$,
\begin{equation*}
b_i=\sum_{\ell=1}^{i-1}\nu_{2(m-i+\ell)+1}b_\ell \quad \text{for} \ i=2,\ldots,m.
\end{equation*}
or equivalently
\begin{equation} \label{eq:recurrencia}
b_{m-j+1}=\sum_{\ell=1}^{m-j}\nu_{2(j+\ell-1)+1}b_\ell=\sum_{k=j}^{m-1} \nu_{2k+1}b_{k-j+1} \quad \text{for} \ j=1,\ldots,m-1.
\end{equation}
Writing family~\eqref{eq:4_anexoII} in the new variables we get
\begin{equation*}
 Sp \, \frac{\partial}{\partial q } + \sum_{k=1}^{m-1} \big(\sum_{i=m-k}^m b_{i-m+k+1} q_i \big)\, \frac{\partial}{\partial p_k } + \big(\nu_1+  \sum_{k=1}^{m-1} \nu_{2k+1} \dot p_k + q_m^2\big)\, \frac{\partial}{\partial p_m }.
\end{equation*}
To obtain a function $V(q)$ such that $\dot p= - \nabla V(q)$ we need $-\partial V/\partial q_i = \dot p_i$ for all $i=1,\ldots,m$. In particular
\begin{equation*}
-\frac{\partial V}{\partial q_m}= \nu_1+\sum_{k=1}^{m-1} \nu_{2k+1} \dot p _k+q_m^2,
\end{equation*}
and therefore
\begin{align*}
-V(q) &= \nu_1 q_m +
 \sum_{k=1}^{m-1}\nu_{2k+1} \big( \frac{1}{2} b_{k+1} q_m^2+ \sum_{i=m-k}^{m-1} b_{i-m+k+1} q_i q_m  \big) \\
 &+\frac{1}{3} q_m^3 +\varphi_{m-1}(q_1,\ldots,q_{m-1}).
\end{align*}
From the identity $-\partial V / \partial q_{m-1} = \dot p_{m-1}$ and taking into account the equation~\eqref{eq:recurrencia} we get
\begin{equation*}
 \frac{\partial\varphi_{m-1}}{\partial q_{m-1}} =
 \sum_{i=1}^m b_{i} q_i - \sum_{k=1}^{m-1} \nu_{2k+1} b_{k} q_m = \sum_{i=1}^{m-1} b_{i} q_i
\end{equation*}
and therefore
$$
\varphi_{m-1}(q_1,\ldots,q_{m-1})=\frac{1}{2}b_{m-1}q_{m-1}^2 + \sum_{i=1}^{m-2} b_i q_i q_{m-1}+ \varphi_{m-2}(q_1,\ldots,q_{m-2}).
$$
Since $-\partial V / \partial q_{m-2}=\dot p_{m-2}$, a similar computation leads to
\begin{equation*}
 \frac{\partial\varphi_{m-2}}{\partial q_{m-2}} =
 \sum_{i=2}^m b_{i-1} q_i - \sum_{k=2}^{m-1} \nu_{2k+1} b_{k-1} q_{m} - b_{m-2}q_{m-1}=\sum_{i=2}^{m-2} b_{i-1} q_i.
\end{equation*}
and hence
$$
\varphi_{m-2}(q_1,\ldots,q_{m-2})=\frac{1}{2}b_{m-3}q_{m-2}^2 + \sum_{i=2}^{m-3} b_{i-1} q_{i} q_{m-2}+ \varphi_{m-3}(q_1,\ldots,q_{m-3}).
$$
A recursive argument provides
\begin{equation*}
\frac{\partial\varphi_{m-j}}{\partial q_{m-j}} =\sum_{i=j}^m b_{i-j+1} q_i -\sum_{k=j}^{m-1} \nu_{2k+1}b_{k-j+1} - \sum_{i=m-j+1}^{m-1} b_{i-j+1}q_i =\sum_{i=j}^{m-j} b_{i-j+1} q_i,
\end{equation*}
for all $j=1,\ldots,\lfloor m/2  \rfloor$ and consequently,
\begin{eqnarray*}
\varphi_{m-j}(q_1,\ldots,q_{m-j})&=&\frac{1}{2}b_{m-2j+1}q_{m-j}^2
\\
&& + \sum_{i=j}^{m-j-1} b_i q_i q_{m-j}+ \varphi_{m-j-1}(q_1,\ldots,q_{m-j-1})
\end{eqnarray*}
where for $j=\lfloor m/2 \rfloor$ the function $\varphi_{m-\lfloor m/2 \rfloor -1}$ is constant. Therefore we get a function $V(q)$ with
\begin{align*}
-V(q)&=\nu_1 q_m +
 \sum_{k=1}^{m-1}\nu_{2k+1} \bigg( \frac{1}{2} b_{k+1} q_m^2+ \sum_{i=m-k}^{m-1} b_{i-m+k+1} q_i q_m  \bigg)
 +\frac{1}{3} q_m^3  \\
&+  \sum_{j=1}^{\lfloor m/2 \rfloor }
\big( \frac{1}{2}b_{m-2j+1}q_{m-j}^2 + \sum_{i=j}^{m-j-1} b_i q_i q_{m-j}\big) + \varphi_{m-\lfloor m/2 \rfloor -1},
\end{align*}
such that $\dot p = -\nabla V(q)$. This concludes the proof of
Theorem \ref{thm:hamiltonian}.

\newpage\appendix
\renewcommand{\thesection}{B}
\renewcommand{\theequation}{B.\arabic{equation}}
\setcounter{equation}{0}
\setcounter{thm}{0}
\section{Estimation of $\xi_{1,\lambda_2}$, $\xi_{1,\lambda_3}$ and $\xi_{2,\lambda_1}$}
\label{apendixB}
Let us write the adjoint variational equation
$w^{\prime}(t)=-Df(p(t))^{\ast }w(t)$ as
\begin{equation}
\label{appB-variational}
\left\{
\begin{array}{l}
w'_1(t)=p_1(t)w_3(t)
\\
w'_2(t)=-w_1(t)+w_3(t)
\\
w'_3(t)=-w_2(t)
\end{array}
\right.
\end{equation}
where $w=(w_1,w_2,w_3)$, $p_1(t)=\alpha(-9\tanh(\beta t)+11\tanh^3(\beta t))$, with $\alpha=15\sqrt{11/19^3}$ and $\beta=\sqrt{11/19}/2$, and $p=(p_1,p_2,p_3)$, with $p_2=p'_1$ and $p_3=p''_2$. Writing $s(t)=\tanh(\beta t)$ it follows that
$$
\begin{array}{l}
p_2(t)=\alpha\beta(1-s(t)^2)(-9+33s(t)^2) \quad \text{and} \quad
p_3(t)=\alpha\beta^2(1-s(t)^2)(84s(t)-132s(t)^3).
\end{array}
$$
Our goal is to show that $\xi_{2,\lambda_1}$,  $\xi_{1,\lambda_2}$ and $\xi_{1,\lambda_3}$, as given in (\ref{coefficients}), are different from zero. Note that from the parities of $p_1$, $p_2$, $p_3$, $\psi_3$ and $\varphi_3$, it follows that
\begin{equation}
\label{integrals}
\begin{array}{l}
    {\displaystyle \xi_{2,\lambda_1}=2\int_{0}^{\infty }\psi_3(t)\,dt,}
    \\[0.3cm]
    {\displaystyle \xi_{1,\lambda_2}=2\int_{0}^{\infty }\varphi_3(t)p_3(t)\,dt,}
    \\
    {\displaystyle \xi_{1,\lambda_3}=-4\kappa\int_{0}^{\infty }\varphi_3(t)p_1(t)p_2(t)\,dt.}
\end{array}
\end{equation}
We have already argued in \S\ref{section:nilpotent-3} that each bounded solution $w(t)$ satisfies an orthogonality condition with respect to $f(p(t))$, that is,
$$
p_2(t)w_1(t)+p_3(t)w_2(t)+(c_k^2-p_2(t)-(p_1(t))^2/2)w_3(t)=0.
$$
with $c_k=\sqrt{2}\alpha$. Now,  introducing $v_1(t)=w_3(t)$ and
$v_2(t)=v_1'(t)$, we get  equivalently
\begin{equation}
\label{dae}
A(t)v'(t)=B(t)v(t),
\end{equation}
with $v(t)=(v_1(t),v_2(t))$ and
$$
A(t)=\left(\begin{array}{cc}1&0\\0&p_2(t)\end{array}\right), \qquad
B(t)=\left(\begin{array}{cc}0&1\\(p_1(t))^2/2-c_k^2&p_3(t)\end{array}\right).
$$
Since we have chosen bounded solutions $\varphi(t)$ and $\psi(t)$ of (\ref{appB-variational}) satisfying the initial conditions $\varphi(0)=(0,-1,0)$ and $\psi(0)=(1-c_k^2/p_2(0),0,1)$, they correspond, with respect to the new variables $v_1$ and $v_2$, to solutions $\hat\varphi(t)=(\hat\varphi_1(t),\hat\varphi_1(t))$ and $\hat\psi(t)=(\hat\psi_1(t),\hat\psi_2(t))$ of (\ref{dae}) with initial conditions  $\hat\varphi(0)=(0,1)$ and $\hat\psi(0)=(1,0)$, respectively. Therefore, taking into account that $\kappa\neq 0$, we only have to show that
$$
\int_{0}^{\infty }\hat\psi_1(t)\,dt\neq 0,\quad
\int_{0}^{\infty }\hat\varphi_1(t)p_3(t)\,dt\neq 0\quad\mbox{and}\quad
\int_{0}^{\infty }\hat\varphi_1(t)p_1(t)p_2(t)\,dt\neq 0.
$$
Note that $A(t)$ is singular at  $\hat
t_\pm=\tanh^{-1}(\sqrt{3/11})/\beta\approx \pm 1{.}5529$. Hence
(\ref{dae}) must be treated as a differential algebraic equation
rather than as an ordinary differential equation on the interval
$[0,\infty)$. We will provide approximate values of the integrals
in (\ref{integrals}) using numerical methods on an interval
$[0,t_0]$, with $t_0>\hat t_+$ to be fixed later, and providing
upper bounds for the absolute value of the integrals on
$[t_0,\infty)$. \pagebreak

In order to get appropriate upper bounds on $[t_0,\infty)$  we
write the equation (\ref{dae}) as $v'(t)=Qv(t)+R(t)v(t)$ with
$R(t)=(A(t))^{-1}B(t)-Q$ and
$$
Q=\lim_{t\to\infty}(A(t))^{-1}B(t)=\left(\begin{array}{cc}0&1\\-30/19&-\sqrt{11/19}\end{array}\right).
$$
The eigenvalues of $Q$ are $\lambda_{\pm}=(-\sqrt{209}\pm i \sqrt{2071})38$ and
$$
P=\left(\begin{array}{cc}(-\sqrt{209}-i\sqrt{2071})/60&(-\sqrt{209}+i\sqrt{2071})/60\\1&1\end{array}\right)
$$
is such that $Q=PJP^{-1}$ where $J$ is the complex canonical form of $Q$ with entries $\lambda_+$ and $\lambda_-$ along the diagonal.
From the Lagrange Formula
$$
v(t)=Pe^{J(t-t_0)}P^{-1}v_0+Pe^{Jt}\int_{t_0}^{t}e^{-Js}P^{-1}R(s)v(s)\,ds,
$$
for the solution $v(t)$ of (\ref{dae}) with $v(t_0)=v_0$, we obtain
$$
e^{-at}\|v(t)\|\leq \|P\| e^{-at_0} \|P^{-1}\| \|v_0\|+\|P\|\int_{t_0}^{t}e^{-as}\|P^{-1}\|\|R(s)\|\|v(s)\|\,ds,
$$
where $a$ denotes the real part of $\lambda_{\pm}$. For any $\varepsilon>0$ we can choose $t_0$ such such that $\|R(t)\|<\varepsilon$ for all $t\geq t_0$ and hence, applying the Gronwall Lemma we get
$$
\|v(t)\|\leq \|P\| \|P^{-1}\| \|v_0\| e^{(a+\|P\| \|P^{-1}\| \varepsilon)(t-t_0)}.
$$
Moreover we can assume that $t_0$ is large enough to have $\|R(t)\|$ strictly decrea\-sing on $[t_0,\infty)$ and $\|P\| \|P^{-1}\| \|R(t_0)\|\ll |a|$.  Therefore we can take $\varepsilon=R(t_0)$ and achieve the following upper bounds
$$
\begin{array}{rcl}
\left|\int_{t_0}^{\infty }\hat\psi_1(t)\,dt \right|
& \leq & \int_{t_0}^{\infty }\|\hat\psi(t)\|\,dt  \\
& \leq & \|P\| \|P^{-1}\| \|\hat\psi(t_0)\| \int_{t_0}^{\infty}e^{(a+\|P\|\|P^{-1}\| \|R(t_0)\|)(t-t_0)}\,dt\\
& \leq & L \|\hat\psi(t_0)\|,
\end{array}
$$
with
$$
L=\frac{-\|P\| \|P^{-1}\|}{a+\|P\| \|P^{-1}\| \|R(t_0)\|},
$$
and, analogously,
$$
\begin{array}{lll}
\left|\int_{t_0}^{\infty }\hat\varphi_1(t)p_3(t)\,dt \right|
& \leq & L|p_3(t_0)|\|\hat\varphi(t_0)\|,
\\[1ex]
\left|\int_{t_0}^{\infty }\hat\varphi(t)p_1(t)p_2(t)\,dt \right|
& \leq & L|p_1(t_0)p_2(t_0)|\|\hat\varphi(t_0)\|,
\end{array}
$$
assuming that $t_0$ is chosen such that $|p_3(t)|$ and $|p_1(t)p_2(t)|$ are decreasing on $[t_0,\infty)$.

\begin{table}[t!]
\begin{center}\begin{tabular}{|c|c|c|c|}
\hline
TOL                    & Estimation of              & Estimation of                        & Estimation of
\\
              & $\int_{0}^{20}\hat\psi_1(t)\,dt$ & $\int_{0}^{20}\hat\varphi_1(t)p_3(t)\,dt$  & $\int_{0}^{20}\hat\varphi_1(t)p_1(t)p_2(t)\,dt$
\\
[1ex]
\hline
$    10^{-3}     $                 &  -2.69317886               &  3.38993680                          & 2.14437726\\
$    10^{-4}     $                 &  -2.66952044               &  3.42255576                          & 2.18337593\\
$    10^{-5}     $                 &  -2.65457297               &  3.42416164                          & 2.19168295\\
$    10^{-6}     $                 &  -2.65558754               &  3.48165747                          & 2.27650773\\
$    10^{-7}     $                 &  -2.65577082               &  3.42402265                          & 2.19092815\\
$    10^{-8}     $                 &  -2.65586498               &  3.42412150                          & 2.19145569\\
$    10^{-9}     $                 &  -2.65592518               &  3.42421346                          & 2.19175351\\
$    10^{-10}    $                 &  -2.65594764               &  3.42423492                          & 2.19185340\\
$    10^{-11}    $                 &  -2.65596101               &  3.42424493                          & 2.19188683\\
$    10^{-12}    $                 &  -2.65596369               &  3.42424803                          & 2.19190343\\
$    10^{-13}    $                 &  -2.65596540               &  3.42424892                          & 2.19190641\\
\hline
\end{tabular}\end{center}
\caption{Estimations of the integrals on $[0,20]$ using the code \texttt{ode15s} provided in MATLAB to deal with differential algebraic equations. Note that MATLAB uses two different tolerances, absolute and relative, to compute optimal steps. We take both equal, with the values indicated in the first column.}
\label{numericalresults1}
\end{table}
\begin{table}[t!]
\begin{center}
\begin{tabular}{|c|c|c|c|}
\hline
TOL                    & Estimation of              & Estimation of                        & Estimation of
\\
              & $L \|\hat\psi(20)\|$ & $|p_3(20)|L \|\hat\varphi(20)\|$  & $|p_1(20)p_2(20)|L \|\hat\varphi(20)\|$
\\
\hline
$10^{-13}$ & 4.219110$\times 10^{-2}$& 2.103834$\times 10^{-7}$&3.321829$\times 10^{-7}$
\\
\hline
\end{tabular}
\end{center}
\caption{Upper bounds of the integrals on the interval $[20,\infty]$.}
\label{numericalresults2}
\end{table}

To conclude we have to provide estimations of the integrals on $[0,t_0]$. One can check that, as required, the value $t_0=20$ is such that $\|R(t)\|$, $|p_3(t)|$ and $|p_1(t)p_2(t)|$ are decreasing on $[t_0,\infty)$. Moreover the numerical results show that the values $\|\hat\psi(20)\|$ and $\|\hat\varphi(20)\|$ provide small enough upper bounds for the integrals on $[t_0,\infty)$. As already mentioned (\ref{dae}) must be treated as a differential algebraic equation. The computing environment MATLAB \cite{matlab} provides codes to deal with such kind of equations. For instance, according to \cite{Shampine}, the \texttt{ode15s} code is an appropriate one to solve DAEs. We have used such method working with different tolerances to get the results shown in Table \ref{numericalresults1}. Once the numerical solution is obtained, the integrals were approximated using the trapezoidal rule with the nodes which were generated by the numerical algorithm. With the highest tolerance value, the maximum step sizes are $0{.}01355215$ and $0{.}01323550$ for the solutions $\hat\psi$ and $\hat\varphi$, respectively. On the other hand one can get accurate estimations of the upper bounds for the integrals on $[t_0,\infty]$. The results taking the approximates values of the solutions at $t=t_0$ obtained with the highest tolerance are shown in Table \ref{numericalresults2}.

\begin{rem}
{\rm All the estimations of upper bounds are obtained using the maximum norm. It can be easily checked that $\|P\|=2$, $\|P^{-1}\|=\sqrt{30/109}+30/\sqrt{2071}$ and $\|R(t)\|=(|p_1(t)^2/2-c_k^2+30/19|+|p_3(t)+\sqrt{11/19}|)/|p_2(t)|$.}
\end{rem}

\begin{rem}
{\rm The code \texttt{ode23t} included in MATLAB also solves differential algebraic equations. Even \texttt{ode45} can be used to solve our particular equation. The results does not change significatively, namely, the differences are below $10^{-4}$. We have also used our own algorithms (a Taylor method of order 25 for the variational equation with small fixed step and projection on the stable manifold). Again the differences are below $10^{-4}$.}
\end{rem}

\newpage\appendix
\renewcommand{\thesection}{C}
\renewcommand{\theequation}{C.\arabic{equation}}
\setcounter{equation}{0} \setcounter{thm}{0}
\section{Proof of Lemma~\ref{lem:eq-bif}}
\label{apendixC}

As it stated in Proposition~\ref{lem:conexion} the variational
equation $z'=Df(p(t))z$ has exponential dichotomy in
$[t_0,\infty)$ and $(-\infty,t_0]$. Let
$$
\mathscr{P}_+(t_0)=X(t_0)P_+X^{-1}(t_0) \quad \text{and} \quad
I-\mathscr{P}_-(t_0)=I-X(t_0)P_-X^{-1}(t_0)
$$ be the corresponding
projection matrix on the stable space
$E^s_{t_0}=T_{p(t_0)}W^s(p_+)$ and instable space
$E^u_{t_0}=T_{p(t_0)}W^u(p_-)$, respectively.

Before to give the proof of Lemma~\ref{lem:eq-bif} we need the
following preliminar result.

\begin{lem} \label{lem2:2.1}
Let $b \in C^0_b([t_0,\infty),\bb^n)$. Then,
 $z^+(t)$ is a positively bounded solution of
 $$z'=Df(p(t))z + b(t)$$ if and only if
\begin{equation} \label{acotada+}
\begin{alignedat}{1}
  z^+(t) &= X(t)X^{-1}(t_0)\mathscr{P}_+(t_0)z^+(t_0)  \\
 &+\int_{t_0}^{t} X(t)X^{-1}(s)
\mathscr{P}_{+}(s) b(s)\,ds - \int_{t}^{\infty}
X(t)X^{-1}(s)(I-\mathscr{P}_{+}(s))b(s)\,ds.
\end{alignedat}
\end{equation}
\end{lem}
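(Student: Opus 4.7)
The plan is to establish the two implications separately. The $(\Leftarrow)$ direction is a direct verification using the dichotomy estimates and Leibniz's rule, while $(\Rightarrow)$ requires decomposing a general solution and identifying the arbitrary constant via the stable projection.

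For $(\Leftarrow)$ I would first check that the right-hand side of \eqref{acotada+} is bounded on $[t_0,\infty)$. Its three terms are controlled by the dichotomy estimates \eqref{eq:2.2}: the homogeneous piece by $K e^{-\alpha(t-t_0)}\|z^+(t_0)\|$, the forward integral by $K\|b\|_\infty/\alpha$ since its kernel decays like $e^{-\alpha(t-s)}$ for $s\leq t$, and the improper backward integral by $L\|b\|_\infty/\beta$. I would then differentiate term by term, using $X'(t)=Df(p(t))X(t)$ and Leibniz's rule. The boundary contribution of the first integral is $\mathscr{P}_+(t)b(t)$, that of the second is $(I-\mathscr{P}_+(t))b(t)$, and their sum equals $b(t)$; the remaining pieces factor $Df(p(t))$ out of the original right-hand side, yielding $z'=Df(p(t))z+b(t)$.

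For $(\Rightarrow)$ let
$$\tilde z(t)=\int_{t_0}^{t} X(t)X^{-1}(s)\mathscr{P}_+(s)b(s)\,ds - \int_{t}^{\infty} X(t)X^{-1}(s)(I-\mathscr{P}_+(s))b(s)\,ds,$$
which by the estimates above is a bounded particular solution on $[t_0,\infty)$. Any solution of the inhomogeneous equation differs from $\tilde z$ by a homogeneous solution, hence $z^+(t)=X(t)X^{-1}(t_0)c+\tilde z(t)$ for a unique $c\in\mathbb{R}^n$. Positive boundedness of $z^+$ forces boundedness of $X(t)X^{-1}(t_0)c$, which by Proposition~\ref{lem:1}(i) is equivalent to $c\in E^s_{t_0}=\mathcal{R}(\mathscr{P}_+(t_0))$, i.e.\ $\mathscr{P}_+(t_0)c=c$.

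It remains to identify $c$ explicitly. Evaluating $\tilde z$ at $t=t_0$ kills the first integral; the second becomes $-\int_{t_0}^{\infty} X(t_0)(I-P_+)X^{-1}(s)b(s)\,ds$ after using the identity $X(t)X^{-1}(s)(I-\mathscr{P}_+(s))=X(t)(I-P_+)X^{-1}(s)$. This vector lies in $X(t_0)\mathcal{N}(P_+)=\mathcal{N}(\mathscr{P}_+(t_0))$, so $\mathscr{P}_+(t_0)\tilde z(t_0)=0$. Applying $\mathscr{P}_+(t_0)$ to $z^+(t_0)=c+\tilde z(t_0)$ then gives $c=\mathscr{P}_+(t_0)z^+(t_0)$, and substitution recovers \eqref{acotada+}. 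The main technical subtlety is this last identification: one must recognise that the particular solution $\tilde z$ lies entirely in the unstable direction at $t=t_0$, so that the stable projection of $z^+(t_0)$ picks out precisely the initial datum of the homogeneous part.
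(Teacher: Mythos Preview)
Your proof is correct and follows essentially the same strategy as the paper's: both rely on the exponential dichotomy estimates to bound the three pieces of the right-hand side, and both verify by differentiation that the integral expression solves the inhomogeneous equation. The only organisational difference is in the forward implication: the paper starts from the variation-of-parameters formula, splits it via the projection $P_+$, and then argues directly (via a growth estimate on $X(t)$) that boundedness forces $(I-P_+)X^{-1}(t_0)z(t_0)+\int_{t_0}^\infty (I-P_+)X^{-1}(s)b(s)\,ds=0$, which upon substitution yields \eqref{acotada+}. You instead first isolate the bounded particular solution $\tilde z$, invoke Proposition~\ref{lem:1}(i) to place the homogeneous remainder in $E^s_{t_0}$, and then identify the constant $c$ by observing $\tilde z(t_0)\in\mathcal N(\mathscr P_+(t_0))$. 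Your route is slightly cleaner in that it quotes the abstract characterisation of the stable subspace rather than re-deriving the unboundedness of $X(t)$, but the two arguments are equivalent in content.
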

\begin{proof}
Since $X(t)$ is the fundamental matrix of the lineal homogeneous
equation $z'=Df(p(t))z$, the solution of the complete linear
equation $z'=Df(p(t))z + b(t)$ are
\begin{align*}
   z(t)&=X(t)X^{-1}(t_0)z(t_0) + X(t)\int_{t_0}^{t} X^{-1}(s)b(s)\, ds. 
\end{align*}
By means of the projection $P_+$ in the exponential dichotomy of
the homogeneous equation in $[t_0,\infty)$, this solution can be
written as
\begin{equation} \label{eq2:2.4}
\begin{alignedat}{1}
  z(t) &= X(t)P_+X^{-1}(t_0)z(t_0) + X(t)(I-P_+)X^{-1}(t_0)z(t_0) \\
 &+X(t)\int_{t_0}^{t} P_{+}X^{-1}(s)b(s)\,ds + X(t)\int_{t_0}^{t} (I-P_{+})X^{-1}(s)b(s)\,ds.
\end{alignedat}
\end{equation}
On the other hand, according to the exponential dichotomy
$\|X(t)P_+X^{-1}(s)\|\leq Ke^{-\alpha(t-s)}$ for 
$t\geq s \geq t_0$,
it follows that
\begin{equation*}
\begin{alignedat}{2}
 |X(t)P_+X^{-1}(t_0)z(t_0)| &\leq K e^{-\alpha (t-t_0)} |z(t_0)| \quad &\text{for} \ t\geq t_0, \\
 |X(t)\int_{t_0}^{t} P_{+}X^{-1}(s)b(s)\,ds| & \leq \int_{t_0}^{t} K e^{-\alpha (t-s)} |b(s)|\,ds \quad &\text{for} \ t\geq t_0,
\end{alignedat}
\end{equation*}
and thus, the first and third term of \eqref{eq2:2.4} are bounded
for $t\geq t_0$.

If we assume that $z(t)$ is bounded, necessarily then the sum
\begin{equation} \label{cota}
\begin{alignedat}{1}
X(t)&(I-P_+)X^{-1}(t_0)z(t_0)+
X(t)\int_{t_0}^{t} (I-P_{+})X^{-1}(s)b(s)\,ds \\
&= X(t) [(I-P_+)X^{-1}(t_0)z(t_0)+ \int_{t_0}^{t}
(I-P_{+})X^{-1}(s)b(s)\,ds]
\end{alignedat}
\end{equation}
is also bounded. However, the  exponential dichotomy
$\|X(t)(I-P_+)X^{-1}(s)\|\leq L e^{-\beta(s-t)}$ for $s\geq t \geq
t_0,$ implies that \begin{align*} |X(t_0)(I-P_+)X^{-1}(t_0)z(t_0)|
&= |X(t_0)(I-P_+)X^{-1}(t)X(t)(I-P_+)X^{-1}(t_0)z(t_0)| \\
&\leq  L e^{-\beta (t-t_0)} |X(t)(I-P_+)X^{-1}(t_0)z(t_0)|.
\end{align*}
Hence,
\begin{equation*}
 |X(t)(I-P_+)X^{-1}(t_0)z(t_0)|\geq  |X(t_0)(I-P_+)X^{-1}(t_0)z(t_0)| L^{-1} e^{\beta (t-t_0)}
\end{equation*}
is not bounded and since  $|X(t)(I-P_+)X^{-1}(t_0)z(t_0)| \leq
\|X(t)\||(I-P_+)X^{-1}(t_0)z(t_0)|$, it follows that the matrix
$X(t)$ is not bounded. Therefore, from~\eqref{cota} we get that
the solution $z(t)$ only can be bounded if it holds
 \begin{equation*}
 (I-P_+)X^{-1}(t_0)z(t_0)+\int_{t_0}^{\infty} (I-P_+)X^{-1}(s)b(s)\, ds=0.
 \end{equation*}
Consequently, from~\eqref{eq2:2.4} we obtain that if $z^+(t)$ is a
bounded solution of~$z'=Df(p(t))z+b(t)$ then
\begin{equation*}
\begin{alignedat}{1}
  z^+(t) &= X(t)X^{-1}(t_0)\mathscr{P}_+(t_0)z^+(t_0)  \\
 &+\int_{t_0}^{t} X(t)X^{-1}(s)
\mathscr{P}_{+}(s) b(s)\,ds - \int_{t}^{\infty}
X(t)X^{-1}(t_0)(I-\mathscr{P}_{+}(s))b(s)\,ds.
\end{alignedat}
\end{equation*}
Conversely, to verify that $z^+(t)$ given in~\eqref{acotada+} is a
bounded solution of $z'=Df(p(t))z+b(t)$ it suffices to see that
\begin{equation*}
L^+b(t)=\int_{t_0}^{t} X(t)X^{-1}(s) \mathscr{P}_{+}(s) b(s)\,ds -
\int_{t}^{\infty} X(t)X^{-1}(t_0)(I-\mathscr{P}_{+}(s))b(s)\,ds
\end{equation*}
is a particular bounded solution of the above complete linear
equation. Indeed,
\begin{equation*}
\begin{alignedat}{2}
 &\frac{d}{dt} L^+b(t) & &= Df(p(t))X(t)\big[ \int_{t_0}^{t} P_{+}X^{-1}(s)b(s)\,ds - \int_{t}^{\infty} (I-P_{+})X^{-1}(s)b(s)\,ds \big] \\
 & & &+ X(t)\big[ P_+X^{-1}(t)b(t)+ (I-P_+)X^{-1}(t)b(t) \big] =Df(p(t)) L^+b(t)+ b(t), \\
&|L^+b(t)| & &\leq \int_{t_0}^{t} K e^{-\alpha (t-s)} |b(s)|\, ds
+ \int_{t}^{\infty} L e^{-\beta (s-t)} |b(s)|\, ds \leq \tilde K +
\tilde L,
\end{alignedat}
\end{equation*}
for all  $t\geq t_0$, where the constant $\tilde K$ and $\tilde L$
not depend on $t$.
\end{proof}

In the same way, a similar result to the negative bounded
solutions of the complete linear equation is followed.
\begin{lem} \label{lem 2.3}
Let $b \in C^0_a((-\infty,t_0],\bb^n)$. Then,
 $z^-(t)$ is a negative bounded solution of $$z'=Df(p(t))z + b(t)$$ if and only if
\begin{equation} \label{acotada-}
\begin{alignedat}{1}
  z^-(t) &= X(t)X^{-1}(t_0)(I-\mathscr{P}_-(t_0))z^-(t_0)  \\
 &+\int_{-\infty}^{t}X(t)X^{-1}(s)
\mathscr{P}_-(s) b(s)\,ds - \int_{t}^{t_0}
X(t)X^{-1}(s)(I-\mathscr{P}_{-}(s))b(s)\,ds.
\end{alignedat}
\end{equation}
\end{lem}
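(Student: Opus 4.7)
The plan is to mirror, step by step, the argument just given for Lemma~\ref{lem2:2.1}, replacing the forward half-line $[t_0,\infty)$ by the backward half-line $(-\infty,t_0]$ and exchanging the roles of the projections $P_+$ and $P_-$. By Proposition~\ref{lem:conexion} the homogeneous equation $z'=Df(p(t))z$ has an exponential dichotomy on $(-\infty,t_0]$ with projection matrix $\mathscr{P}_-(t)=X(t)P_-X^{-1}(t)$ and kernel $\mathcal{N}(\mathscr{P}_-(t_0))=T_{p(t_0)}W^u(p_-)$. The dichotomy estimates on this interval express that the $(I-P_-)$-component of the flow decays backward while the $P_-$-component generically grows backward, which is exactly the opposite of the forward situation treated in Lemma~\ref{lem2:2.1}.

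First I would write the general solution of $z'=Df(p(t))z+b(t)$ in variation-of-constants form and decompose it into four pieces via $I=P_-+(I-P_-)$. The two $(I-P_-)$-pieces are backward bounded thanks to $\|X(t)(I-P_-)X^{-1}(s)\|\leq Le^{-\beta(s-t)}$ for $s\geq t$ together with $b\in C^0_b$; using the identity $\mathscr{P}_-(s)=X(s)P_-X^{-1}(s)$ they rearrange as
$$X(t)X^{-1}(t_0)(I-\mathscr{P}_-(t_0))z^-(t_0)-\int_{t}^{t_0}X(t)X^{-1}(s)(I-\mathscr{P}_-(s))b(s)\,ds,$$
which already accounts for two of the three terms in~(\ref{acotada-}). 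The remaining $P_-$-pieces group into $X(t)P_-\bigl[X^{-1}(t_0)z^-(t_0)-\int_{t}^{t_0}X^{-1}(s)b(s)\,ds\bigr]$, and the core of the proof is to show that this quantity equals $\int_{-\infty}^{t}X(t)X^{-1}(s)\mathscr{P}_-(s)b(s)\,ds$ precisely when $z^-$ is backward bounded.

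To establish this equivalence I would set
$$v_{-\infty}\,:=\,P_-X^{-1}(t_0)z^-(t_0)-\int_{-\infty}^{t_0}P_-X^{-1}(s)b(s)\,ds\;\in\;\mathcal{R}(P_-),$$
whose defining improper integral converges by the bound $\|X(t_0)P_-X^{-1}(s)\|\leq Ke^{-\alpha(t_0-s)}$ on $(-\infty,t_0]$. The same dichotomy estimate, now with $s=t_0$, yields $\|X(t)v_{-\infty}\|\geq K^{-1}e^{\alpha(t_0-t)}\|X(t_0)v_{-\infty}\|$, so if $v_{-\infty}\neq 0$ then $X(t)v_{-\infty}$, and hence $z^-(t)$, blows up as $t\to-\infty$. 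Conversely, when $v_{-\infty}=0$ the $P_-$-block becomes $X(t)P_-\int_{-\infty}^{t}X^{-1}(s)b(s)\,ds=\int_{-\infty}^{t}X(t)X^{-1}(s)\mathscr{P}_-(s)b(s)\,ds$, which is uniformly backward bounded by $K\|b\|_\infty/\alpha$. Regrouping the three resulting pieces then reproduces exactly formula~(\ref{acotada-}).

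The converse implication is routine: differentiating the right-hand side of~(\ref{acotada-}) shows it satisfies $z'=Df(p(t))z+b(t)$, and a uniform backward bound for each of the three summands follows from the dichotomy estimates together with $\|b\|_\infty$, in direct analogy with the closing computation of Lemma~\ref{lem2:2.1}. The only step demanding care is the sharp lower bound that forces $v_{-\infty}=0$; since it is the verbatim mirror image of the forward argument already carried out, I do not expect a genuine obstacle beyond careful bookkeeping of the integration limits and the sign flip $\int_{t_0}^{t}=-\int_{t}^{t_0}$ inherent to the backward regime.
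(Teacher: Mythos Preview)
Your proposal is correct and is exactly the approach the paper takes: the paper does not give a separate proof of this lemma at all, stating only that ``a similar result to the negative bounded solutions of the complete linear equation is followed'' by mirroring the proof of Lemma~\ref{lem2:2.1}. Your write-up carries out that mirroring explicitly, with the roles of $\mathscr{P}_+$ and $I-\mathscr{P}_-$ interchanged and the integration limits adjusted to the backward half-line; the only cosmetic difference is that you package the obstruction to backward boundedness as a single vector $v_{-\infty}\in\mathcal{R}(P_-)$ and bound $X(t)v_{-\infty}$ from below directly, whereas the paper's forward argument routes the analogous step through the unboundedness of the full matrix $X(t)$---but these are the same estimate.
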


\begin{rem} \label{nota:solucion}
Notice that the functions given in~\eqref{acotada+}
and~\eqref{acotada-} are both solutions of the equation
$z'=Df(p(t))z+b(t)$ for any continuous function $b(t)$ on
$[t_0,\infty)$ and $(-\infty,t_0]$, respectively. On the other
hand, to prove that both solutions are bounded solutions of this
equation we need to use that $b(t)$ is also bounded.
\end{rem}

Recall that $E^s_{t_0}=T_{p(t_0}W^s(p_+)$ and
$E^u_{t_0}=T_{p(t_0)}W^u(p_-)$.
Notice that since the (homo)heteroclinic orbit $\gamma$ is non
degenerate, $E_{t_0}=E^s_{t_0}\cap E^u_{t_0}$ is one-dimensional.
In fact, this space is generated by the vector
$p'(t_0)=f(p(t_0))$. Moreover, according to Remark~\ref{rem:d} the
dimension of
$$
E^*_{t_0}=E^{s*}_{t_0}\cap E^{u*}_{t_0}=[E^s_{t_0} +
E^u_{t_0}]^\bot
$$
is $d=s_--s_++1$ where $s_\pm$ are the stable indexes of $p_+$ and
$p_-$ respectively.

Let us define $W^{\pm}_{t_0}$ the ortogonal complement to
$E_{t_0}$ in the tangent space of the stable and unstable
manifolds, respectively, that is $E^s_{t_0}=E_{t_0} \bot
W^+_{t_0}$ and $E^u_{t_0}=E_{t_0} \bot W^-_{t_0}$. Then
\begin{equation} \label{eq:descomposicion}
\mathbb{R}^n= \mathrm{span} \{ f(p(t_0)) \} \oplus \, W^+_{t_0}
\oplus W^-_{t_0} \oplus E^*_{t_0}.
\end{equation}
Finally, we take the transversal section to $\gamma$ at $p(t_0)$
given by
$$
\Sigma_{t_0}=p(t_0)+\{f(p(t_0))\}^\bot=p(t_0)+[W^+_{t_0} \oplus
W^-_{t_0} \oplus E^*_{t_0}]
$$
and a base $\{ w_i: i=1\dots d\}$ de $E^*_{t_0}$. Notice that
$$
w_i(s)=X^{-1}(s)^*X(t_0)^*w_i \quad \text{for $i=1,\dots, d$}
$$
are  bounded linearly independent solutions of the adjoint
variational equation. 

\begin{proof}[Proof of Lemma~\ref{lem:eq-bif}]
The solutions $p_{\lambda}(t)$ of~\eqref{*1} can be written as
$p_{\lambda}(t)=p(t)+z_{\lambda}(t)$ where $z_{\lambda}(t)$ is a
solution of~\eqref{*2}. Let $Y_{t_0}=W^+_{t_0}\oplus W^-_{t_0}
\oplus E^*_{t_0}$. Assuming $p_{\lambda}(t_0)\in \Sigma_{t_0}$
then $z_{\lambda}(t_0)\in Y_{t_0}$.

In order to get that $p_{\lambda}(t)$ parametrizes an orbit in
$W^s(p_+(\lambda))$ (resp.  $W^u(p_-(\lambda))$), the function
$z_{\lambda}(t)$ have to be a positively (resp.~negatively)
bounded solution of~\eqref{*2}. Now if we assume that $z^\pm(t)$
are a pair of positively and negatively bounded solutions
of~\eqref{*2} respectively, then $b(\cdot,z^\pm(\cdot),\lambda)
\in C_b^0(J_\pm,\bb^n)$ where $J_+=[t_0,\infty)$ and
$J_-=(-\infty,t_0]$. Thus, according to Lemma~\ref{lem2:2.1} and
Lemma~\ref{lem 2.3} it must be met that
\begin{equation} \label{2.8}
\begin{alignedat}{1}
  z^+(t) &= X(t)X^{-1}(t_0)\mathscr{P}_+(t_0)z^+(t_0) \\
 &+\int_{t_0}^{t} X(t)X^{-1}(s)
\mathscr{P}_{+}(s) b(s,z^\pm(s),\lambda)\,ds  - \int_{t}^{\infty}
X(t)X^{-1}(s)(I-\mathscr{P}_{+}(s))b(s,z^\pm(s),\lambda)\,ds
\end{alignedat}
\end{equation}
and
\begin{equation} \label{2.9}
\begin{alignedat}{1}
  z^-(t) &= X(t)X^{-1}(t_0)(I-\mathscr{P}_-(t_0))z^-(t_0) \\ &+\int_{-\infty}^{t} X(t)X^{-1}(s)
\mathscr{P}_-(s) b(s,z^\pm(s),\lambda)\,ds   - \int_{t}^{t_0}
X(t)X^{-1}(s)(I-\mathscr{P}_{-}(s))b(s,z^\pm(s),\lambda)\,ds.
\end{alignedat}
\end{equation}

Conversely, according to Remark~\ref{nota:solucion}, the solutions
$z^+(t)$ and $z^-(t)$ of the integral equations~\eqref{2.8}
and~\eqref{2.9} are both solutions of~\eqref{*2}, but not
necessarily bounded. The existence of positively and negatively
bounded solutions of~\eqref{2.8} and~\eqref{2.9} will be proved as
an application of Implicit Function Theorem.

Ler $\eta^+ =\mathscr{P}_+(t_0)z^+(t_0) \in W^+_{t_0}$ and
$\eta^-=(I-\mathscr{P}_-(t_0))z^-(t_0) \in W^-_{t_0}$.
Equations~\eqref{2.8} and \eqref{2.9} can be written in the form
\begin{equation} \label{eq:H}
z^\pm=\mathscr{H}^\pm(z^\pm,\eta^\pm,\lambda)
\end{equation}
where $\mathscr{H}^\pm : C^0_b(J_\pm, \mathbb{R}^n)\times
W^\pm_{t_0} \times \mathbb{R}^k \to C^0_b(J_\pm, \mathbb{R}^n)$.
In order to apply the Implicit Function Theorem to the equation
$z-\mathscr{H}^\pm(z^\pm,\eta^\pm,\lambda)=0$ notice first that
$\mathscr{H}^\pm(0,0,0)=0$. On the other hand,
$D_z\mathscr{H}^\pm(z^\pm,\eta^\pm,\lambda): C^0_b(J_\pm, \bb^n)
\to C^0_b(J_\pm, \bb^n)$ is the null function for $z^\pm=\eta^\pm=
\lambda=0$. Indeed, for any $h \in C^0_b(J_+, \bb^n)$ it holds
that
\begin{align*}
  D_z\mathscr{H}^+(z,\eta,\lambda) h(t) &=
\int_{t_0}^{t}\Phi(t,s)\mathscr{P}_+(s)D_zb(s,z(s),\lambda)h(s)\,ds
\\ &- \int_{t}^{\infty}
\Phi(t,s)(I-\mathscr{P}_+(s))D_zb(s,z(s),\lambda) h(s)\,ds.
\end{align*}
Since $D_zb(s,0,0)=0$ then $D_z\mathscr{H}^+(0,0,0)=0$. Similarly
it follows that $Dz\mathscr{H}^-(0,0,0)=0$. Therefore, there
exists $\delta_\pm
>0$ such that for every $\eta^\pm\in W^\pm_{t_0}$ and $\lambda\in
\bb^k$ with $|\eta^\pm|$, $|\lambda|<\delta_\pm$ there is a unique
$z^\pm(\eta^\pm,\lambda) \in C^0_b(J_\pm,\bb^n)$ so that
$z^\pm(\eta^\pm,\lambda)=\mathscr{H}^\pm(z^\pm(\eta^\pm,\lambda),\eta^\pm,\lambda)$
and $z^\pm(0,0)=0$.

Now, consider the condition $z^-(\lambda,\eta^-)(t_0)-z^+(\lambda,
\eta^+)(t_0)\in E^*_{t_0}$. Since $Y_{t_0}=W^+_{t_0}\oplus
W^-_{t_0} \oplus E^*_{t_0}$, we can write
\begin{equation*} \label{eq2:descomposicion}
    z^\pm(\eta^\pm,\lambda)(t_0)=\eta^\pm + w^\mp(\eta^\pm,\lambda)+\varrho^\pm(\eta^\pm,\lambda)
\end{equation*}
where $w^\mp(\eta^\pm,\lambda) \in W^\mp_{t_0}$ and
$\varrho^\pm(\eta^\pm,\lambda) \in E^*_{t_0}$. From~\eqref{2.8}
and ~\eqref{2.9}, and having into account that
$X(t)X^{-1}(s)\mathscr{P}_\pm(s)=\mathscr{P}_\pm(t)X(t)X^{-1}(s)$,
it follows that
\begin{equation} \label{eq:romanpaladin}
\begin{alignedat}{1}
z^-(\eta^-,\lambda)(t_0) &=\eta^- + \mathscr{P}_-(t_0) \int_{-\infty}^{t_0} X(t_0)X^{-1}(s)b(s,z^-(\eta^-,\lambda)(s),\lambda)\, ds, \\
z^+(\eta^+,\lambda)(t_0) &= \eta^+ -(I-\mathscr{P}_+(t_0))
\int_{t_0}^\infty
X(t_0)X^{-1}(s)b(s,z^+(\eta^+,\lambda)(s),\lambda)\, ds.
\end{alignedat}
\end{equation}
Thus
\begin{equation} \label{eq:w+}
\begin{alignedat}{1}
w^+(\eta^-,\lambda)+\varrho^-(\eta^-,\lambda)&= \mathscr{P}_-(t_0) \int_{-\infty}^{t_0} X(t_0)X^{-1}(s)b(s,z^-(\eta^-,\lambda)(s),\lambda)\, ds, \\
w^-(\eta^+,\lambda)+\varrho^+(\eta^+,\lambda) &=
-(I-\mathscr{P}_+(t_0)) \int_{t_0}^\infty
X(t_0)X^{-1}(s)b(s,z^+(\eta^+,\lambda)(s),\lambda)\, ds.
\end{alignedat}
\end{equation}

Recall that $z^\pm(0,0)=0$ and hence $w^\pm(0,0)=\rho^\pm(0,0)=0$.
On the other hand, applying that $D_zb(t,0,0)=0$ in~\eqref{eq:w+}
we get that
\begin{equation} \label{2.12}
\begin{alignedat}{1}
D_{\eta^\mp}\varrho^\mp(0,0)&=D_{\lambda}\varrho^\mp(0,0)=0, \\
D_{\eta^\mp}w^\pm(0,0)&=D_{\lambda}w^\pm(0,0)=0.
\end{alignedat}
 \end{equation}

The condition $z^-(\lambda,\eta^-)(t_0)-z^+(\lambda,
\eta^+)(t_0)\in E^*_{t_0}$ is equivalent to the system of two
equations
$\eta^\pm-w^\pm(\eta^\mp,\lambda)= 0$
which we write in the form $F(\eta,\lambda)=0$ where
$\eta=(\eta^+,\eta^-)\in W^+_{t_0} \times W^-_{t_0}$. We have that
$F(0,0)=0$ and according to~\eqref{2.12} it follows that
$D_{\eta}F(0,0)=I$. Hence, applying again the Implicit Function
Theorem we get $\eta$ as a function of $\lambda$. We conclude that
there exists $\delta>0$ ($\delta< \delta_\pm$) such that for any
$\lambda \in \bb^k$ with $|\lambda|<\delta$ there is a unique
$\eta^\pm(\lambda)\in W^\pm_{t_0}$ so that
$\eta^\pm(\lambda)-w^\pm(\eta^\mp(\lambda),\lambda)= 0$ and
$\eta^\pm(0)=0$.

Now consider the functions
$z^\pm_\lambda(t)=z^\pm(\eta^\pm(\lambda),\lambda)(t)$. The
uniqueness, boundedness and regularity respect to $\lambda$ of
$z^\pm_\lambda(t)$, and thus of
$p_\lambda^\pm(t)=p(t)+z^\pm_\lambda(t)$, are following from the
Implicit Function Theorem. Also we have that $z_0^\pm=0$ and since
for $|\lambda|$ small enough $z^\pm_{\lambda}$ is close to
$z^\pm_0=0$ we get that $\sup_{t\in J_\pm}
|p_{\lambda}^\pm(t)-p(t)|$ is arbitrarily small. This means that
the orbits parameterize by  $p_{\lambda}^\pm(t)$ are close to the
orbit $\gamma$ which is  parameterized by  $p(t)$. So, together
the hyperbolicity of the equilibria $p_\pm(\lambda)$ we get that
$\lim_{t\to \pm \infty} p_\lambda^\pm(t)=p_\pm(\lambda)$. Thus,
for each  $\lambda$ with $|\lambda|<\delta$, the solutions
$p_\lambda^\pm(t)$ parameterizes, respectively, orbits in the
stable manifold $W^{s}(p_+(\lambda))$ and in the unstable manifold
$W^{u}(p_-(\lambda))$ such that $p^\pm_{\lambda}(t_0)\in
p(t_0)+Y_{t_0} = \Sigma_{t_0}$ and
$\xi^{\infty}(\lambda)=p_\lambda^-(t_0)-p_\lambda^+(t_0)=
z_\lambda^-(t_0)-z^+_\lambda(t_0) \in E^*_{t_0}$. This proves the
first item of the lemma.

\begin{figure}
\begin{center}
\scalebox{1}{\input{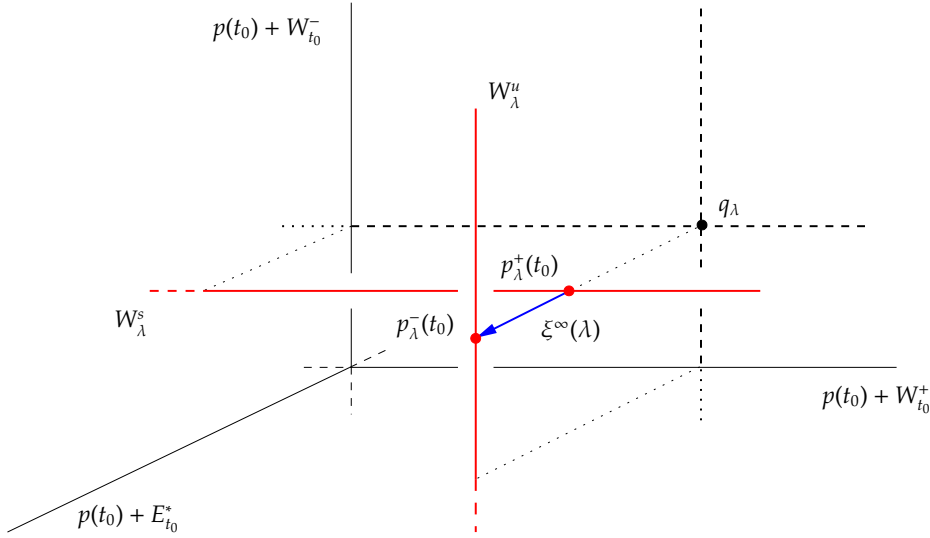}} \caption{The figure shows the
transversal section $\Sigma_{t_0}=p(t_0)+[W^+_{t_0}\oplus
W^-_{t_0}\oplus E^*_{t_0}]$ at the point $p(t_0)$ to a non
degenerate (homo)heteroclinic orbit $\gamma=\{p(t): t\in
\mathbb{R}\}$. The curves $W^s_{\lambda}$ and $W^u_{\lambda}$ are,
respectively, the intersections
$W^s(p_+(\lambda))\cap\Sigma_{t_0}$ and
$W^u(p_-(\lambda))\cap\Sigma_{t_0}$.  A priori the curve
$W^s_{\lambda}$ does not met $W^u_{\lambda}$. However, the
projection along the direction $E^*_{t_0}$ of both curves on
$p(t_0)+[W^-_{t_0} \oplus W^+_{t_0}]$ have a unique transversal
intersection point $q_{\lambda}$. Now, $q_\lambda + E^*_{t_0}$
mets, respectively, $W^s_{\lambda}$ and $W^u_{\lambda}$ at
$p_\lambda^+(t_0)$ and $p_{\lambda}^-(t_0)$. This two points
define the vector $\xi^\infty(\lambda)$. The persistence of the
connection holds if $\xi^\infty(\lambda)=0$ which provides a set
of $d=\dim E^*_{t_0}$ conditions.} \label{fig:E'}
\end{center}
\end{figure}

In order to prove the second item notice that if
$\xi^\infty(\lambda)=0$ then a (homo)heteroclinic orbit of
~\eqref{*1} is given by
\begin{equation*}
p_{\lambda}(t)=\left\{
\begin{alignedat}{1}
 p_\lambda^-(t) & \quad  \text{para} \ t\leq t_0, \\
p_\lambda^+(t) &  \quad \text{para} \ t\geq t_0.
\end{alignedat}
\right.
\end{equation*}
where $p^\pm_\lambda(t)$ are the solutions in the first item. On
the other hand, if $p_\lambda(t)$ is a solution parametrising a
(homo)heteroclinic connection such that
$p_\lambda(t_0)\in\Sigma_{t_0}$ with $|\lambda|$  and
$|p_\lambda(t_0)-p(t_0)|$ small enough, its restriction to the
intervals $J_-=(-\infty,t_0]$ and $J_+=[t_0,\infty)$ define a pair
of solutions  $p_\lambda^\pm(t)$ in the assumption of the first
item. That is, $p^\pm_\lambda(t_0)\in\Sigma_{t_0}$ and
$p_\lambda^-(t_0)-p_\lambda^+(t_0)=0\in E^*_{t_0}$. This makes
obvious the reciprocal implication.

To conclude the proof of the second item notice that
\begin{equation*} \xi^{\infty}(\lambda)=
p^{-}_{\lambda}(t_0)-p^{+}_{\lambda}(t_0) = z^-_\lambda(t_0)
-z^+_\lambda(t_0) \in E^*_{t_0}.
\end{equation*}
Since $\{ w_i: i=1\dots d\}$ is a base of
$E^*_{t_0}=E^{s*}_{t_0}\cap E^{u*}_{t_0}=
[E^s_{t_0}+E^u_{t_0}]^\bot$ then
\begin{equation*}
  \xi^\infty(\lambda)=\sum_{i=1}^d
  <w_i,\xi^\infty(\lambda)>w_i.
\end{equation*}
Form~\eqref{eq:romanpaladin} and having into account that
$<w_i,\eta^\pm>=0$, it follows that
\begin{equation} \label{eq:cita}
\begin{alignedat}{1}
 \xi^\infty_i(\lambda)&=<w_i,\xi^{\infty}(\lambda)>
= <w_i,\mathscr{P}_-(t_0)\int_{-\infty}^{t_0}
X(t_0)X^{-1}(s) b(s, z^-_\lambda(s), \lambda) \, ds > \\
&+ <w_i, (I-\mathscr{P}_+(t_0))\int^{\infty}_{t_0} X(t_0)X^{-1}(s) b(s, z^+_\lambda(s), \lambda) \, ds> \\
&=  \int_{-\infty}^{t_0}
<[\mathscr{P}_-(t_0)X(t_0)X^{-1}(s)]^*w_i,
 b(s, z^-_\lambda(s), \lambda) \, ds > \\
&+ \int^{\infty}_{t_0}<[(I-\mathscr{P}_+(t_0))
X(t_0)X^{-1}(s)]^*w_i,  b(s, z^+_\lambda(s), \lambda) \, ds>.
\end{alignedat}
\end{equation}
Thus, since $\mathscr{P}_-(t_0)^*: \bb^n\to E^{u*}_{t_0}$ and
$I-\mathscr{P}_+(t_0)^*: \bb^n\to E^{s*}_{t_0}$ we get that
\begin{align*}
 [\mathscr{P}_-(t_0)X(t_0)X^{-1}(s)]^*w_i &=X^{-1}(s)^*X(t_0)^*\mathscr{P}_-(t_0)^*w_i= w_i(s) \\
[(I-\mathscr{P}_+(t_0))X(t_0)X^{-1}(s)]^*w_i
&=X^{-1}(s)^*X(t_0)^*(I-\mathscr{P}_+(t_0)^*)w_i= w_i(s).
\end{align*}
Substituting  in~\eqref{eq:cita} we obtain that
\begin{equation*}
\xi_i^{\infty}(\lambda)\equiv \int_{-\infty}^{t_0}< w_i(s),
b(s,z^-_\lambda(s),\lambda)>\,ds+\int_{t_0}^{\infty}< w_i(s),
b(s,z^+_\lambda(s),\lambda)>\,ds =0.
\end{equation*}
This concludes the second item and proves Lemma~\ref{lem:eq-bif}.
\end{proof}

\newpage
\def\cprime{$'$}

\end{document}